\documentclass[letterpaper,11pt,reqno]{amsart}

\usepackage[margin=1.18in]{geometry}
\usepackage{amsmath}
\usepackage{amssymb}
\usepackage{amsthm}
\usepackage{mathtools}
\usepackage{mathrsfs}
\usepackage{enumitem}
\usepackage{microtype}

\usepackage[implicit=true,colorlinks=true,
  linkcolor=blue,citecolor=blue,urlcolor=blue]{hyperref}
\usepackage[notref,notcite,color]{showkeys}
\allowdisplaybreaks[2]

\numberwithin{equation}{section}

\theoremstyle{plain}
\newtheorem{theorem}{Theorem}[section]
\newtheorem{lemma}[theorem]{Lemma}
\newtheorem{proposition}[theorem]{Proposition}

\theoremstyle{definition}
\newtheorem{definition}[theorem]{Definition}

\theoremstyle{remark}
\newtheorem{remark}[theorem]{Remark}

\newcommand{\E}{\mathbb{E}}
\newcommand{\PP}{\mathbb{P}}
\newcommand{\R}{\mathbb{R}}

\newcommand{\NN}{\mathcal{N}}
\newcommand{\ind}{\mathbf{1}}
\newcommand{\cC}{\mathcal{C}}
\newcommand{\cF}{\mathcal{F}}
\newcommand{\cG}{\mathcal{G}}
\newcommand{\cH}{\mathcal{H}}
\newcommand{\cL}{\mathcal{L}}
\newcommand{\cZ}{\mathcal{Z}}
\newcommand{\sZ}{\mathscr{Z}}
\DeclareMathOperator{\Var}{Var}

\DeclareMathOperator{\cum}{cum}
\DeclareMathOperator{\sym}{sym}
\DeclareMathOperator{\Pois}{Poisson}
\DeclareMathOperator{\supp}{supp}
\newcommand{\eps}{\varepsilon}
\newcommand{\noi}{\noindent}
\newcommand{\dd}{d}
\newcommand{\ot}{\otimes}

\newcommand{\Jfour}{\mathcal J_4}
\newcommand{\dW}{d_{\rm W}}
\newcommand{\Del}{\Delta}
\newcommand{\Gam}{\Gamma}
\newcommand{\wt}{\widetilde}
\newcommand{\s}{\sigma}

\newcommand{\inner}[2]{\left\langle #1,#2\right\rangle}

\newcommand{\AND}{\quad{\rm and}\quad}

\hypersetup{
  pdftitle={Gamma approximation and Poisson--Gaussian invariance principle on Poisson chaos},
  pdfauthor={Dionysis Milesis and Guangqu Zheng},
  pdfkeywords={Poisson chaos; Gamma approximation; Poisson--Gaussian invariance principle; four-moment theorem; martingale core; Malliavin--Stein method},
}

\title[Gamma approximation and Poisson--Gaussian invariance]
{Gamma approximation and Poisson--Gaussian \\ invariance principle  on Poisson chaos}

\author[D.~Milesis and G.~Zheng]{Dionysis Milesis and Guangqu Zheng}

\address{
Dionysis Milesis\\
Department of Mathematics and Statistics\\
Boston University\\
665 Commonwealth Avenue\\
Boston, MA 02215, USA
 }

\email{dmilesis@bu.edu}

\address{
Guangqu Zheng\\
Department of Mathematics and Statistics\\
Boston University\\
665 Commonwealth Avenue\\
Boston, MA 02215, USA}

\email{gzheng90@bu.edu}

\subjclass[2020]{Primary 60F05, 60H07;
Secondary 60G55, 60H05, 60E07}
\keywords{Poisson chaos
$\cdot$ Gamma approximation
$\cdot$ Poisson--Gaussian invariance principle
$\cdot$ four-moment theorem
$\cdot$ martingale core
$\cdot$ Malliavin--Stein method.}

\begin{document}

\begin{abstract}
We study centered Gamma approximation and a same-kernel
Poisson--Gaussian invariance principle on fixed Poisson chaoses.  For
Gamma approximation, a martingale-core argument extends the
carr\'e-du-champ and $d_2$ estimates of D\"obler and Peccati  (Ann. Probab., 2018)
from
regular kernels to every fourth-integrable chaos element.  In the
diffuse regime, characterized by vanishing fourth add-one energy, this
yields an exact four-moment criterion under uniform integrability of
fourth powers.  In the rare-jump regime, ordinary moments do not
determine the approximation mechanism: convergence of the full moment
sequence may coexist with a nonvanishing fourth add-one energy, and we
construct such centered Gamma limits in every fixed chaos order.

The invariance principle is independent of the Gamma target.  For
Poisson and Gaussian multiple integrals with the same kernel, we bound
both smooth-test discrepancies and the Wasserstein distance in terms of
the variance and the fourth add-one energy.  Thus, vanishing fourth
add-one energy is an intrinsic Lindeberg condition under which the two
chaoses are asymptotically indistinguishable in distribution.  
Combined with a moment-transfer estimate and the Gaussian
fourth-moment theorem, this gives an alternative proof of the
qualitative normal fourth-moment theorem on a fixed Poisson chaos.  A
rainbow example shows that the Lindeberg condition is essential: the
Gaussian analogue may be asymptotically normal while the Poisson
integral converges to a centered compound-Poisson law.  The same
comparison also explains the different behavior of even and odd chaos
orders for diffuse centered Gamma limits.

\end{abstract}

\maketitle

\section{Introduction and main results}
\label{SEC1}

\subsection{From the Gaussian fourth-moment theorem to Gamma limits}
\label{SEC_11}

The fourth-moment theorem of Nualart and Peccati \cite{NP05}
asserts that, for a fixed $q\geq2$, a sequence $(F_n)_n$ in the $q$th
Gaussian Wiener chaos with unit variance satisfies
\noi
\begin{align*}
 F_n\xrightarrow{\mathrm d}N\sim\NN(0,1)
 \quad\Longleftrightarrow\quad
 \E[F_n^4]\longrightarrow3.
\end{align*}
Thus, within a fixed Gaussian Wiener chaos, 
convergence to the normal law is determined by the second and fourth moments.
Peccati and Tudor \cite{PT05} proved the multivariate extension, and
Nourdin and Peccati \cite{NP09a} combined Malliavin calculus with
Stein's method to obtain quantitative bounds.  We refer to
\cite{NP12} for a systematic account and to \cite{APY21} for a recent
survey and applications to random point measures.

A natural noncentral companion to the Nualart--Peccati criterion
was obtained by Nourdin and Peccati
\cite{NP09b}.  For $\nu>0$, let
\noi
\begin{align}
 Z_\nu=2X_{\nu/2,1}-\nu,
 \label{def_Znu}
\end{align}
where $X_{\nu/2,1}$ is Gamma distributed with
shape $\nu/2$ and rate one.
Let $(F_n)_n$ be a sequence in a fixed even Gaussian chaos with
variance converging to $2\nu$.  Nourdin and Peccati established the
following equivalence:

\noi
\begin{align}
  F_n\xrightarrow{\mathrm d} Z_\nu
 \quad\Longleftrightarrow\quad
 \E[F_n^4]-12\E[F_n^3]
 \longrightarrow12\nu^2-48\nu.
 \label{NP_Gam}
\end{align}
The centered Gamma law is asymmetric, so the third moment necessarily
enters the criterion.

The situation on a Poisson chaos is different.  The add-one
Malliavin derivative is a difference operator, and the diffusion chain
rule is no longer available.  The first Poisson Malliavin--Stein bounds
for normal approximation were established in \cite{PSTU10,PZ10};
contraction, cumulant, and Berry--Esseen refinements followed in
\cite{LRP13,ET14,PZ14}.  For centered Gamma approximation, the results
most relevant to the present paper are the following.

\begin{itemize}[leftmargin=2em,itemsep=0.35em]
\item Peccati and Th\"ale \cite{PT13} obtained systematic
Malliavin--Stein bounds for Gamma approximation of Poisson functionals.
They also derived contraction criteria for multiple Poisson integrals,
a quantitative noncentral de Jong theorem for degenerate
$U$-statistics of order two, and multidimensional mixed limits.

\item Fissler and Th\"ale \cite{FT16} asked whether the Gaussian moment
relation \eqref{NP_Gam} forces a Gamma limit on a Poisson chaos.  Their
results in orders two and four require additional contraction
conditions.  In order two, one still assumes the diagonal condition
$\|f_n^2\|\to0$ (see condition (a) in \cite[Theorem 3.5]{FT16}).  
The order-four statement was corrected in
\cite{FT17}, where nonnegative kernels and further vanishing
contractions are imposed.  These results already indicate that
ordinary moments do not capture the full Poisson structure.

\item D\"obler and Peccati \cite{DP18a} developed the centered Gamma
Stein equation on the whole real line.  Their estimates refined
\cite{PT13}, strengthened the Gaussian-space Gamma bounds, and yielded
a noncentral de Jong theorem under minimal uniform integrability
assumptions.

\item D\"obler and Peccati \cite{DP18b} studied normal and centered
Gamma approximation on a general Poisson chaos. In the normal case,
they proved an exact fourth-moment theorem in every chaos order by
combining Stein's method, Mecke identities, and a spectral
carr\'e-du-champ argument. In the Gamma case, they obtained a $d_2$
bound involving the mixed third--fourth moment defect and an additional
fourth add-one term $\Jfour(F)$ (see \eqref{J4F}). In Remark~1.8 of \cite{DP18b}, they pointed out that
``{\it For the time being, it is a challenging open problem to determine whether
such a {\rm(}fourth add-one{\rm)} term can be removed}.''

\end{itemize}

The open problem in Remark~1.8 of \cite{DP18b} is the starting
point of the Gamma part of the present paper.  Theorem~\ref{thm3}
shows that the fourth add-one term cannot be replaced by the mixed
third--fourth moment defect alone.  Theorem~\ref{thm44}{\bf(a)}
extends this obstruction to any prescribed finite collection of
moments.  In the opposite direction, Theorem~\ref{thm44}{\bf(b)}
shows that $\Jfour(F_n)\to0$ is not necessary for centered Gamma
convergence, even when every moment converges.  Thus, $\Jfour$
distinguishes the mechanism of approximation rather than the target
law.

These results further suggest that $\Jfour$ has a role beyond
centered Gamma approximation.  Its vanishing describes a diffuse
regime in which no individual Poisson point has a macroscopic
fourth-order effect.  This naturally raises the question of whether,
in this regime, a Poisson multiple integral behaves like its Gaussian
counterpart.  This question leads to the second main theme of the
paper: a quantitative Poisson--Gaussian invariance principle
controlled by $\Jfour$.

More precisely, given a symmetric kernel $f$, we compare the Poisson
integral $I_q^\eta(f)$ with the Gaussian integral $I_q^W(f)$ built
from the same kernel.  In the homogeneous-sum setting, Nourdin,
Peccati, and Reinert \cite[Theorems~1.2 and~4.1]{NPR10} established
Gaussian universality through low-influence estimates and uniform
moment assumptions on the underlying coordinates.  For discrete
Poisson chaoses, Peccati and Zheng \cite[Theorem~3.4]{PZ14} proved a
qualitative normal universality result under the standing lower bound
on the Poisson intensities in
\cite[Theorem~3.2 and Remark~3.3]{PZ14}.  The transfer principle in
\cite[Proposition~1.5]{DVZ18} shows that a Poisson fourth-moment
condition transfers to the same-kernel Gaussian chaos in the
Lebesgue-control setting.

Our invariance principle instead compares the two laws directly and
does not depend on a prescribed target distribution.  It applies on a
general $\s$-finite control space and allows the cell masses in a
tetrahedral approximation to tend to zero.  The comparison is
quantitative and is controlled by $\Jfour$, which plays the role of
an intrinsic Lindeberg quantity in the Poisson setting.

\subsection{Main results: Gamma approximation and Poisson--Gaussian invariance}
\label{SEC_12}

We now state the main results; the required background is collected
in Section~\ref{SEC2}.  All random objects are defined on a common
probability space $(\Omega,\cF,\PP)$, and $\cL(X)$ denotes the law of
$X$.  Let $(\cZ,\sZ,\mu)$ be a $\s$-finite measure space, let $\eta$ be
a Poisson random measure with control $\mu$, and let $I_q^\eta(f)$
denote the $q$th multiple Wiener--It\^o integral of a symmetric kernel
$f\in L_s^2(\mu^q)$ with respect to $\widehat\eta=\eta-\mu$.
We write
\[
\cC_q^\eta=\{I_q^\eta(f):f\in L_s^2(\mu^q)\}
\]
for the $q$th Poisson chaos.
All analytic estimates and comparison results are stated on this fixed
$\s$-finite control space.  The constructive existence and counterexample
results are realized on suitable atomless $\s$-finite extensions, as
explained at the beginning of Section~\ref{SEC4}.

For a Poisson functional $F\in L^0(\Omega,\s\{\eta\},\PP)$,
its add-one derivative is
$D_zF=F(\eta+\delta_z)-F(\eta)$,
and, whenever $F\in L^4(\Omega, \s\{\eta\}, \PP)$, we set
\begin{align} \label{J4F}
\Jfour(F)
=
\int_\cZ\E[|D_zF|^4]\mu(\dd z).
\end{align}
The quantity $\Jfour(F)$ is the fourth add-one term introduced in
\cite{DP18b}
and it is finite when $F\in\cC_q^\eta\cap L^4(\Omega)$;
see \cite[Theorem~1.8]{Zhe26a}.\footnote{If
$F=I_q^\eta(f)\in L^4(\Omega)$ with $f\in L_s^2(\mu^q)$, then
$f\in L^4(\mu^q)$; see \cite[(1.12)]{Zhe26a}.}
It measures the aggregate fourth-order effect of adding
a single Poisson point.  Accordingly, the condition
$\Jfour(F_n)\to0$ plays the role of an add-one Lindeberg condition: no
individual point has a macroscopic fourth-order effect.

The results below show that $\Jfour$ separates two qualitatively
different mechanisms of centered Gamma approximation.  In the
\emph{diffuse regime}, where $\Jfour(F_n)\to0$, the Poisson chaos
becomes asymptotically close to its same-kernel Gaussian counterpart,
and centered Gamma convergence is governed by a four-moment
criterion.  Outside this regime, genuinely Poissonian rare-jump
effects may persist.  In particular, centered Gamma convergence may
hold, even together with convergence of all moments, while
$\Jfour(F_n)$ stays bounded away from zero.  We refer to this as the
\emph{rare-jump regime}.

When comparing Poisson and Gaussian chaoses, we take, on a product
extension if necessary, an isonormal Gaussian process $W$ over
$L^2(\mu)$ that is independent of $\eta$, and let $I_q^W(f)$ denote
the multiple Wiener integral of the kernel $f$.  We define
\[
\cC_q^W
=
\{I_q^W(f):f\in L_s^2(\mu^q)\},
\]
which is the corresponding Gaussian Wiener chaos of order $q$.

The Gamma results below distinguish the diffuse and rare-jump
mechanisms.  The same-kernel invariance principle is a separate,
target-free result; the two parts meet in the parity criterion of
Proposition~\ref{prop19}.

Let $F$ be centered with $\E[F^2]=2\nu$, and put

\noi
\begin{align}
 \Del_\nu(F)
 =\E[F^4]-12\E[F^3]-12\nu^2+48\nu.
 \label{intro_Del}
\end{align}

Our analytic starting point is a closure of the centered Gamma
estimates in \cite{DP18b} to every
$F\in\cC_q^\eta\cap L^4(\Omega)$.  The constants $C_1(\nu)$ and
$C_2(\nu)$ are defined in \eqref{C_def}.

\begin{theorem} \label{thm1}
Fix an integer $q\geq1$ and
let $F\in\cC_q^\eta\cap L^4(\Omega)$
satisfy
$\E[F^2]=2\nu$.  Then

\noi
\begin{align}
 \frac1{6q}\Del_\nu(F)+\frac1{12q^2}\Jfour(F)
 \leq\Var\left(2F-\frac1q\Gam(F,F)\right)
 \leq\frac13\Del_\nu(F)+\frac1{6q}\Jfour(F).
 \label{intro_Gam_cl}
\end{align}
Here, $\Gam(F,F)$ is the carr\'e-du-champ defined in
\eqref{GF1}--\eqref{GF2}.  Moreover, for the distance $d_2$ defined in
\eqref{d2d},
we have

\noi
\begin{align}
 d_2(F,Z_\nu)
 \leq
 C_1(\nu)\sqrt{|\Del_\nu(F)|}
 +C_2(\nu)
 \left(\frac1q\Jfour(F)\right)^{1/2}.
 \label{Gam_bdd}
\end{align}
\end{theorem}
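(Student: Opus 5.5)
Theorem~\ref{thm1} extends the estimates of D\"obler and Peccati \cite{DP18b}. There they are proved for kernels $f$ that are regular: bounded, with support of finite $\mu$-measure (or lying in a suitable dense class where all the Mecke/product-formula manipulations are justified). The plan is in two stages. First, establish \eqref{intro_Gam_cl} and \eqref{Gam_bdd} for such regular $F$ by invoking the corresponding identities of \cite{DP18b}. Second, pass to a general $F\in\cC_q^\eta\cap L^4(\Omega)$ by an approximation that controls every term in the inequalities simultaneously.

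\textbf{Regular case.} For regular $F$, \cite{DP18b} provides the carr\'e-du-champ identity $\Gam(F,F)=\frac12(L(F^2)-2FLF)$, with $LF=-qF$. It also provides, via Mecke's formula and the spectral decomposition of $F^2$ into chaoses of order at most $2q$, the two-sided comparison
\[
\frac1{6q}\Del_\nu(F)+\frac1{12q^2}\Jfour(F)\le \Var\Bigl(2F-\tfrac1q\Gam(F,F)\Bigr)\le \tfrac13\Del_\nu(F)+\tfrac1{6q}\Jfour(F).
\]
Concretely, one expands $\E[F^4]$ and $\E[F^3]$ as $\frac1q\E[F^3\Gam(F,F)]$-type expressions and compares $\E[F^2\Gam]$ with $\E[\Gam^2]$. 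The projections of $F^2$ on chaoses $1,\dots,2q$ carry weights $p/(2q)\in(0,1]$, and this produces the constants $1/6q$ versus $1/3$. The term $\E\int (D_zF)^4\mu(dz)$ arises from the difference between $D(F^2)$ and $2FDF$, namely $(DF)^2$. The $d_2$ bound \eqref{Gam_bdd} then follows from the centered Gamma Stein equation of \cite{DP18a}: for $h$ with $\|h\|_{C^2}\le1$ and its Stein solution $g$,
\[
\E[2(F+\nu)g'(F)-Fg(F)]=\E\Bigl[g'(F)\bigl(2F+2\nu-\tfrac1q\Gam(F,F)\bigr)\Bigr]+R.
\]
Here the remainder $R$ comes from the non-diffusive integration by parts and is bounded by $\|g''\|_\infty\frac1q\int\E|D_zF|^3|D_zF|\,\mu(dz)$. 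Cauchy--Schwarz and the variance bound above then give $C_1\sqrt{|\Del_\nu|}+C_2(\Jfour/q)^{1/2}$, after absorbing $\Jfour\le 12q^2\Var(\cdot)$ to write everything in these two quantities.

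\textbf{Approximation (the main obstacle).} The difficulty is that a general $f\in L^2_s(\mu^q)$ with $I_q^\eta(f)\in L^4$ need not be regular, and naive truncation $f_n=f\ind_{\{|f|\le n\}\cap A_n^q}$ does not obviously give $\E[F_n^4]\to\E[F^4]$. Fourth moments of Poisson integrals involve diagonal contractions, which are not continuous in the $L^2$ or $L^4$ norm of the kernel alone. I would use a martingale core. Choose an increasing sequence of finite-measure sets $A_n\uparrow\cZ$ and set
\[
F_n=\E[F\mid\s\{\eta|_{A_n}\}]=I_q^\eta(f\ind_{A_n^{\otimes q}}).
\]
This is an $L^4$-bounded martingale, so by Doob's theorem $F_n\to F$ in $L^4$, and for each $z$, $D_zF_n=\E[D_zF\mid\cdots]\ind_{A_n}(z)$. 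Conditional Jensen then gives $\Jfour(F_n)\le\Jfour(F)$, with convergence by dominated convergence. On the finite-measure space $A_n$, one further truncates $|f|\le M$ and again controls the fourth moments, using that $I_q^\eta$ on a finite-measure space is a polynomial in finitely many Poisson points, together with the hypercontractivity-type estimate of \cite{Zhe26a}.

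\textbf{Passing to the limit.} Finally, I would check continuity of each term. $\Del_\nu$ is continuous under $L^4$ convergence after renormalizing the variance via $\nu_n\to\nu$. $\Gam(F_n,F_n)\to\Gam(F,F)$ in $L^2$, since $\Gam$ is expressed through $F^2$ and $F$, and $L$ acts boundedly on the finitely many chaoses of $F^2$ by $L^4$ convergence. $d_2(F_n,Z_{\nu_n})\to d_2(F,Z_\nu)$. Taking limits in the regular-case inequalities yields Theorem~\ref{thm1}.
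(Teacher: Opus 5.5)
Your closure scheme is the same as the paper's: cite \cite{DP18b} on a regular class, then pass to the limit along approximants for which $\Del_\nu$, $\Gam(\cdot,\cdot)$ in $L^2$ (via \eqref{Gamma_cont}) and $\Jfour$ converge. The gap is in how you build the regular approximants.

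\textbf{First stage.} This part is sound. $\E[F\mid\s\{\eta|_{A_n}\}]=I_q^\eta(f\ind_{A_n^q})$ is an $L^4$ martingale, and $D_zF_n=\ind_{A_n}(z)\,\E[D_zF\mid\s\{\eta|_{A_n}\}]$. Conditional Jensen and dominated convergence then give $\Jfour(F_n)\to\Jfour(F)$, once you know $\Jfour(F)<\infty$ from \cite{Zhe26a}.

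\textbf{Second stage.} These kernels are still unbounded, so you truncate $|f|\le M$. That truncation is not a conditional expectation, so neither Doob nor Jensen applies. This is exactly the obstacle you flagged for naive truncation. The reason you give for why it works on a finite-measure space is not valid. For a non-step kernel, $I_q^\eta(g)$ is a $U$-statistic over a Poisson number of points, not a polynomial in finitely many fixed Poisson variables. Moreover, there is no hypercontractivity on a fixed Poisson chaos, even over a finite-measure space: $(\eta(A)-\mu(A))/\sqrt{\mu(A)}$ has unit variance and fourth moment $3+\mu(A)^{-1}$.

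\textbf{How to repair it.} The step can be fixed with tools already in the paper. Take $g$ bounded and supported in $S^q$ with $\mu(S)<\infty$. Cauchy--Schwarz gives $\|g_{\boldsymbol z}\|_2^4\le\mu(S)^{q-t}\int g(\boldsymbol z,\cdot)^4\,\dd\mu^{q-t}$. Combining this with \eqref{star_section}, the product formula and \eqref{iso} yields
$\|I_q^\eta(g)\|_4^2\le C_{q,\mu(S)}\bigl(\|g\|_{L^2(\mu^q)}^2+\|g\|_{L^4(\mu^q)}^2\bigr)$.
Since $f\in L^4(\mu^q)$ by \cite{Zhe26a}, the truncations are Cauchy in $L^4(\Omega)$. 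Their limit is $I_q^\eta(f\ind_{A_n^q})$, because they converge to it in $L^2$. Applying \eqref{D4_prelim} to the difference then gives convergence of $\Jfour$. So the real input is $f\in L^4(\mu^q)$, not hypercontractivity.

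\textbf{Comparison with the paper.} The paper avoids the second stage entirely. It conditions on finitely many counts as in \eqref{count_proj}, which is still a conditional expectation and lands directly in $\cC_q^\bigstar$. It then imports the graph-norm convergence \eqref{core_graph} from Proposition~\ref{prop_core}. It also rescales by $a_n\to1$ rather than letting $\nu_n\to\nu$; both choices work.

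\textbf{Regular case.} This is peripheral, since both arguments cite \cite{DP18b}, but your sketch has two slips. The $d_2$ bound uses the upper variance estimate, not the lower bound $\Jfour\le12q^2\Var(\cdot)$. The remainder is of order $\frac1q\|g''\|_\infty\int\E|D_zF|^3\,\mu(\dd z)$, which Cauchy--Schwarz and \eqref{D_L2} bound by $\|g''\|_\infty\sqrt{2\nu\Jfour(F)/q}$. It is not $\int\E|D_zF|^4\,\mu(\dd z)$, which would give a bound linear in $\Jfour$ rather than its square root.
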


Theorem~\ref{thm1} extends \cite[Theorem~1.7]{DP18b} by removing
Assumption~$\mathbf A$: the same $d_2$ bound, with the same constants and
the same fourth add-one term, holds for every
$F\in\cC_q^\eta\cap L^4(\Omega)$.  Likewise, \eqref{intro_Gam_cl}
extends \cite[Lemma~5.3]{DP18b} under the sole assumption of a finite
fourth moment.

The fourth add-one term is genuinely structural.  If the moments of
$F_n$ converge to those of $Z_\nu$, then
$\cum_4(F_n)=\E[F_n^4]-3\E[F_n^2]^2\to48\nu$, rather than zero.  Hence
the general estimate
\[
 \Jfour(F)\leq(4q-3)\bigl\{\E[F^4]-3\E[F^2]^2\bigr\},
\]
proved in \cite[Theorem~1.8]{Zhe26a}, yields only boundedness of
$\Jfour(F_n)$ and does not force the second term in \eqref{Gam_bdd} to
vanish.  The smoothing inequality \eqref{dW_d2} converts
\eqref{Gam_bdd} into a Wasserstein bound with a square-root loss.
Proposition~\ref{prop17}(a), already for $q=1$, shows that a general
linear conversion from $d_2$ to $\dW$ is impossible under only a
moment bound.

The proof of Theorem~\ref{thm1} uses the martingale-core approximation
from \cite{Zhe26a}.  Given a finite family of multiple Poisson
integrals, that construction produces a common increasing filtration
generated by finitely many exact Poisson counts.  The corresponding
conditional expectations remain in their original chaoses, have
bounded step kernels with finite-measure support, and converge in every
prescribed $L^p$ norm.  For a fourth-integrable chaos element, the
approximation also converges in the $L^4$ graph norms of all iterated
Malliavin derivatives; see Section~\ref{SEC_23}.  Regular identities
are therefore proved first on finite Charlier chaoses $\cC^\bigstar_q$
(Definition \ref{def_CCQ})
and then passed
to the limit without truncating the chaos expansion.  This closure principle was used in
\cite[Theorem~1.9]{Zhe26a} to remove Assumptions~$\mathbf A$ and
$\mathbf A^{\mathrm{loc}}$ from the D\"obler--Peccati Kolmogorov
bound.  The martingale core and the derivative estimates are developed
in \cite{Zhe26a}, where they are applied to normal approximation in
Kolmogorov distance.  In the present paper, this machinery is used only
as a closure input.  The Gamma closure, the rare-jump constructions,
the same-kernel invariance principle, and the parity analysis are the
subject of the results below.

The first main consequence of Theorem~\ref{thm1} is an
exact four-moment criterion in the \emph{diffuse regime}, i.e.,
when $\Jfour(F_n)\to0$.

\begin{theorem}
\label{thm2}
Fix an integer $q\geq1$ and $\nu\in(0,\infty)$.  Let
$F_n\in\cC_q^\eta\cap L^4(\Omega)$ satisfy
\noi
\begin{align}
 \E[F_n^2]=2\nu
\AND
 \Jfour(F_n)\longrightarrow0.
 \label{diff_ass}
\end{align}
Then $\Del_\nu(F_n)\to0$ implies $F_n\xrightarrow{\mathrm d}Z_\nu$.
If, in addition, $\{F_n^4:n\geq1\}$ is uniformly integrable, then the
following assertions are equivalent:
\begin{align*}
\begin{aligned}
 {\rm(i)}\ F_n\xrightarrow{\mathrm d}Z_\nu
 &\Longleftrightarrow {\rm(ii)}\ d_2(F_n,Z_\nu)\to0\\
 &\Longleftrightarrow {\rm(iii)}\ \E[F_n^3]\to8\nu
 \AND \E[F_n^4]\to12\nu^2+48\nu
 \Longleftrightarrow {\rm(iv)}\ \Del_\nu(F_n)\to0.
\end{aligned}
\end{align*}

\end{theorem}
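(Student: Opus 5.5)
The first assertion will follow directly from the $d_2$ bound \eqref{Gam_bdd} of Theorem~\ref{thm1}. Under \eqref{diff_ass}, if $\Del_\nu(F_n)\to0$, then both terms on the right of \eqref{Gam_bdd} vanish, so $d_2(F_n,Z_\nu)\to0$. Next I need that $d_2$-convergence implies convergence in distribution. The class of test functions defining $d_2$ (presumably $h\in C^1$ with $h'$ Lipschitz and both $\|h'\|_\infty,\|h''\|_\infty\le1$) is rich enough: it contains smooth compactly supported functions up to scaling. Moreover, $(F_n)$ is tight since $\E[F_n^2]=2\nu$. Hence $\E[h(F_n)]\to\E[h(Z_\nu)]$ for all $h\in C_c^\infty$ gives $F_n\xrightarrow{\mathrm d}Z_\nu$. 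Alternatively, I would invoke the smoothing inequality \eqref{dW_d2}, which bounds $\dW$ by a power of $d_2$, and note that $\dW$-convergence implies weak convergence. This gives (iv)$\Rightarrow$(ii)$\Rightarrow$(i), and the (ii)$\Rightarrow$(i) step does not need uniform integrability.

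Under uniform integrability of $\{F_n^4\}$, I would close the cycle with (i)$\Rightarrow$(iii)$\Rightarrow$(iv)$\Rightarrow$(ii)$\Rightarrow$(i).
\begin{itemize}
\item \textbf{(i)$\Rightarrow$(iii).} If $F_n\to Z_\nu$ in law and $\{F_n^4\}$ is uniformly integrable, then $\E[F_n^k]\to\E[Z_\nu^k]$ for $k\le4$, by the standard Skorokhod/Vitali argument. Uniform integrability of $F_n^4$ also controls $|F_n|^3$.
\item \textbf{Moments of $Z_\nu$.} These follow from the cumulants of $2X_{\nu/2,1}$. The $k$th cumulant of $X_{\nu/2,1}$ is $(\nu/2)(k-1)!$, so $Z_\nu$ has cumulants $\kappa_2=2\nu$, $\kappa_3=8\nu$ and $\kappa_4=48\nu$. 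Therefore $\E[Z_\nu^3]=8\nu$ and $\E[Z_\nu^4]=48\nu+3(2\nu)^2=12\nu^2+48\nu$.
\item \textbf{(iii)$\Leftrightarrow$(iv).} Since $\E[F_n^2]=2\nu$ is fixed, (iii) gives $\Del_\nu(F_n)\to 12\nu^2+48\nu-96\nu-12\nu^2+48\nu=0$.
\end{itemize}

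The converse (iv)$\Rightarrow$(iii) is the step I expect to be the main obstacle. Knowing only that $\E[F_n^4]-12\E[F_n^3]$ converges does not obviously force each moment separately. My plan is to pass through distributional convergence: (iv) yields (i) by the first part, and then (i) together with uniform integrability yields (iii). This avoids any algebraic separation of the two moments. Without the uniform integrability hypothesis, (i)$\Rightarrow$(iv) would fail, which is why that hypothesis is assumed.

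Finally, (ii)$\Rightarrow$(i) was handled above. For completeness, (i)$\Rightarrow$(ii) is automatic under uniform integrability via (iii)$\Rightarrow$(iv)$\Rightarrow$(ii), so all four assertions are equivalent. The only genuinely analytic input is Theorem~\ref{thm1}. The $\Jfour(F_n)\to0$ hypothesis is used exactly once, to kill the second term of \eqref{Gam_bdd}.
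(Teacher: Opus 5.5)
Your proof is correct and is essentially the paper's argument: \eqref{Gam_bdd} with $\Jfour(F_n)\to0$ gives the sufficient part and (iv)$\Rightarrow$(ii)$\Rightarrow$(i); uniform integrability of $\{F_n^4\}$ gives (i)$\Rightarrow$(iii); and the definition of $\Del_\nu$ gives (iii)$\Rightarrow$(iv). The paper additionally records (i)$\Rightarrow$(ii) directly via $\dW$, which your cycle makes redundant. Your closing remark that (i)$\Rightarrow$(iv) would fail without uniform integrability is not correct under \eqref{diff_ass}, although your proof never uses it: Proposition~\ref{prop19} gives (i)$\Leftrightarrow$(iv) for even $q$ and rules out (i) for odd $q$, with no uniform integrability assumption.
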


\begin{remark}\rm
(i)
The sufficient part of Theorem~\ref{thm2} should be compared
with \cite[Theorem~1.7 and Remark~1.8(b)]{DP18b}.  Under their
Assumption~$\mathbf A$, D\"obler and Peccati showed that
\noi
\begin{align*}
\{ \text{$\E[F_n^2]\to2\nu$,
$ \Del_\nu(F_n)\to 0,$
and
 $\Jfour(F_n)\to 0$}\} 
 \quad
 \text{imply}
 \quad \text{$F_n\xrightarrow{\mathrm d}Z_\nu$}.
\end{align*}
They also observed that, for
a fixed chaos order and sufficiently regular kernels, the condition
$\Jfour(F_n)\to0$ follows from the vanishing of the Poisson
contractions $f_n\star_b^a f_n$ with $a<b$;
see also Lemma~\ref{lem_offdiag} for the converse implication.

(ii)
Theorem~\ref{thm2} differs from
\cite[Theorem~1.7]{DP18b} in two respects.  First, by the
closure result of Theorem~\ref{thm1}, no Assumption
$\mathbf A$ or other regularity condition is needed beyond
$F_n\in L^4(\PP)$.  Second, once the diffuse condition
$\Jfour(F_n)\to0$ is imposed, uniform integrability of the fourth
powers turns the sufficient Gamma criterion into an exact
four-moment characterization.  Thus $\Jfour(F_n)\to0$ is best viewed
as a structural, Lindeberg-type condition separating the diffuse
Gamma regime from genuinely Poissonian rare-jump behavior.

(iii) It follows from Proposition~\ref{prop19} that, for odd $q$,
$\Jfour(F_n)\to0$ and $\Del_\nu(F_n)\to0$ cannot hold simultaneously.
Thus the diffuse Gamma criterion is non-vacuous only in even chaos
orders.  This also resolves an  issue raised by Peccati and
Th\"ale \cite[Remark~2.8(iii)]{PT13}, who observed that,
unlike in the Gaussian setting, one could not exclude a priori
odd-order Poisson multiple integrals converging to a centered Gamma
distribution.

(iv) For double Poisson integrals, the diffuse condition admits an
explicit contraction representation.  The contraction notation and
the Poisson product formula are recalled in Section~\ref{SEC_21}.
Indeed, if $F_n=I_2^\eta(f_n)$ with $f_n\in L^2_s(\mu^2)$, then
$D_zF_n=2I_1^\eta(f_n(z,\cdot))$, and we deduce from the first-chaos
case \eqref{I1_prod} of the product formula \eqref{Poi_prod_full},
together with the isometry \eqref{iso}, that
\noi
\begin{align*}
 \Jfour(F_n)
 &=
 48\|f_n\star_2^1 f_n\|_{L^2(\mu)}^2
 +
 16\|f_n\|_{L^4(\mu^2)}^4.
\end{align*}
Consequently,
\noi
\begin{align*}
 \Jfour(F_n)\longrightarrow0
 \quad\Longleftrightarrow\quad
 \|f_n\star_2^1f_n\|_{L^2(\mu)}\longrightarrow0
 \AND
 \|f_n\|_{L^4(\mu^2)}\longrightarrow0.
\end{align*}

This identity clarifies the technical assumptions in
Peccati and Th\"ale \cite[Section~3-(VIII)]{PT13}.  Those assumptions
are primarily integrability conditions ensuring that the relevant
Poisson contractions and product-formula terms are well defined and
square-integrable; they do not by themselves impose asymptotic
smallness.  In contrast, $\Jfour(F_n)\to0$ requires the two genuinely
Poisson diagonal quantities above to vanish.

For $q=2$, the Poisson-specific vanishing contraction conditions
appearing in the Gamma criterion of \cite{PT13} are therefore exactly
summarized by $\Jfour(F_n)\to0$.  The remaining middle-contraction
condition
 $\| f_n\wt{\star_1^1}f_n -f_n\|_{L^2(\mu^2)}
 \to 0$
is the corresponding Gaussian second-chaos Gamma condition.  Thus,
in the double-chaos case, the contraction criterion of
Peccati--Th\"ale can be viewed as the combination of the diffuse
Poisson condition $\Jfour(F_n)\to0$ and the Gaussian Gamma
contraction condition (condition (iv) in \cite[Theorem~1.2]{NP09b}).
\end{remark}

A natural question is whether the diffuse condition
$\Jfour(F_n)\to0$ can itself be deduced from the first four moments.
The next result shows that this is not the case.

\begin{theorem}
\label{thm3}
For every $\nu>0$, there exist an atomless $\s$-finite control
space, a Poisson random measure $\eta$ on it, and
$F_\nu\in\cC_2^\eta$ such that
\noi
\begin{align}
 \E[F_\nu^2]=2\nu,
 \quad
 \E[F_\nu^3]=8\nu,
\AND
 \E[F_\nu^4]=12\nu^2+48\nu,
 \label{exact_four}
\end{align}
but
\noi
\begin{align}
 \cL(F_\nu)\neq\cL(Z_\nu)
 \AND
 \Jfour(F_\nu)>0.
 \label{thm4_2}
\end{align}
\end{theorem}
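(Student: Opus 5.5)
The plan is to build $F_\nu$ as a finite sum of independent, explicitly computable second-chaos blocks that live on disjoint sets. The moment identities \eqref{exact_four} then become three scalar equations in the block parameters. The law inequality and $\Jfour(F_\nu)>0$ should come almost for free.

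\textbf{Building block.} Take the control space to be $(0,\infty)$ with Lebesgue measure (atomless and $\sigma$-finite). For a Borel set $A$ with $\mu(A)=a$, set $N_A=\eta(A)\sim\Pois(a)$. The Charlier identity gives
\[
Q_A:=I_2^\eta(\ind_{A\times A})=(N_A-a)^2-N_A .
\]
If $A_1,\dots,A_m$ are pairwise disjoint and $c_j\in\R$, then
\[
F=\sum_j c_jQ_{A_j}=I_2^\eta\Bigl(\sum_j c_j\ind_{A_j\times A_j}\Bigr)\in\cC_2^\eta ,
\]
and the summands are independent, so cumulants add: $\cum_k(F)=\sum_j c_j^k\kappa_k(a_j)$, where $\kappa_k(a):=\cum_k(Q_A)$.

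\begin{itemize}[leftmargin=2em,itemsep=0.35em]
\item First compute $\kappa_k(a)$ for $k\le4$ explicitly from the Poisson cumulants, writing $Q_A=N^2-(2a+1)N+a^2$. These are polynomials in $a$.
\item Their leading terms must be $\kappa_2\sim2a^2$, $\kappa_3\sim8a^3$ and $\kappa_4\sim48a^4$, because $Q_A/a\to Y^2-1$ with $Y\sim\NN(0,1)$.
\item The lower-order terms are the genuinely Poissonian corrections.
\end{itemize}

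\textbf{Reformulating \eqref{exact_four}.} Since $\E[F]=0$, the conditions \eqref{exact_four} are equivalent to
\[
\cum_2(F)=2\nu,\qquad \cum_3(F)=8\nu,\qquad \cum_4(F)=48\nu,
\]
because the centered $Z_\nu$ has exactly these cumulants. The idea is to pick $m$ blocks, with $m=\lceil\nu\rceil+1$ or so, in a regime where each block $c_jQ_{A_j}$ is close to a scaled $Y_j^2-1$. A sum of such chi-square blocks with weights $w_j$ has cumulants
\[
\Bigl(2\sum_j w_j^2,\ 8\sum_j w_j^3,\ 48\sum_j w_j^4\Bigr),
\]
which equals $(2\nu,8\nu,48\nu)$ when, for instance, $\nu$ weights equal $1$ and the rest are $0$ (for integer $\nu$). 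For non-integer $\nu$ I would allow a mixture of full-size blocks and extra free parameters, including blocks with negative $c_j$ to push $\cum_3$ down and small blocks.

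The exact equations should then be solved by continuity. Consider the map
\[
(c_1,a_1,\dots,c_m,a_m)\longmapsto(\cum_2,\cum_3,\cum_4)(F).
\]
It is polynomial and has more unknowns than equations. I would show that its Jacobian has rank $3$ at a suitable approximate solution, and that the error introduced by the Poissonian corrections can be absorbed. This can be done either by the implicit function theorem in the large-$a$ regime, or, more robustly, by an intermediate-value argument along a one-parameter path on which the defect $\cum_4-48\nu$ changes sign while $\cum_2$ and $\cum_3$ are held fixed by the other parameters.

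\textbf{Main obstacle.} This exact solvability step is where I expect the real difficulty. In particular, it requires checking that the sign structure of the corrections (e.g.\ the sign of $\kappa_4(a)-48a^4$ relative to its scale) permits both over- and undershooting. If needed, I would add a block $I_2^\eta(\ind_{A\times B})=\widehat\eta(A)\widehat\eta(B)$ with $A\cap B=\emptyset$. Its cumulants are also explicit and have a different correction profile, which gives an extra independent direction.

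\textbf{Conclusion.} For $\cL(F_\nu)\ne\cL(Z_\nu)$: $F_\nu$ is a polynomial in finitely many Poisson variables, so its law is supported on a countable set. On the other hand, $Z_\nu$ is absolutely continuous, so the two laws cannot coincide. For $\Jfour(F_\nu)>0$: if $\Jfour(F_\nu)=0$, then $D_zF_\nu=0$ for $\mu$-a.e.\ $z$. For a chaos element $F_\nu=I_2^\eta(f)$ we have $D_zF_\nu=2I_1^\eta(f(z,\cdot))$, so this forces $f=0$ in $L^2(\mu^2)$. That contradicts $\E[F_\nu^2]=2\nu>0$.
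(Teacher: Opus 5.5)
\textbf{There is a genuine gap.} Solving $\cum_2(F)=2\nu$, $\cum_3(F)=8\nu$, $\cum_4(F)=48\nu$ exactly is essentially the whole content of the theorem, and you leave it as an unproven continuity or implicit-function step. Worse, the regime you propose for that step provably cannot work.

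For $\mu(A)=a$ one computes
\[
\cum_2(Q_A)=2a^2,\qquad \cum_3(Q_A)=8a^3+4a^2,\qquad \cum_4(Q_A)=48a^4+144a^3+8a^2 .
\]
The three target identities force $\Del_\nu(F)=\cum_4(F)-12\cum_3(F)+24\cum_2(F)=0$, and this quantity is additive over your independent blocks. A single block $cQ_A$ contributes
\[
\cum_4-12\cum_3+24\cum_2=8c^2a^2\bigl[(1+18a+6a^2)c^2-(6+12a)c+6\bigr].
\]
The bracket is a quadratic in $c$ with discriminant $12-288a$. So every block with $a>1/24$ and $c\neq0$ contributes a strictly positive amount. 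Hence no combination of near-$\chi^2$ blocks, i.e.\ all masses large, can reach $\Del_\nu(F)=0$, and no perturbation of the Gaussian configuration helps.

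For non-integer $\nu$ even the Gaussian limit system has no solution. With effective weights $w_j=c_ja_j$, the equalities $\sum_jw_j^2=\sum_jw_j^3=\sum_jw_j^4=\nu$ force $\sum_jw_j^2(w_j-1)^2=0$, hence $w_j\in\{0,1\}$. Diagonal blocks can help only at masses $a\leq 1/24$, i.e.\ in the rare-jump regime. For example, $a=1/24$, $c=24/13$ gives cumulants exactly $(2,8,48)/169$, but summing copies of it only covers $\nu\in\frac1{169}\mathbb{N}$. You give no argument in that regime.

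The missing idea is the block you mention only as a fallback, and it makes the continuity argument unnecessary. Take $G=4\widehat\eta(A)\widehat\eta(B)$ with $A\cap B=\varnothing$, $\mu(A)=\lambda$, $\mu(B)=\rho$. By independence, $\E[G^2]=16\lambda\rho$ and $\E[G^3]=64\lambda\rho$, so the ratio $\cum_3/\cum_2=4$ holds automatically. Meanwhile $\E[G^4]=256\lambda\rho\{1+3(\lambda+\rho)+9\lambda\rho\}$ depends on the still-free parameter $\lambda+\rho$. Choosing $\lambda\rho=\theta/8$ and $\lambda+\rho=\frac16-\frac\theta4$, which has positive real roots for small $\theta>0$, gives cumulants exactly $(2\theta,8\theta,48\theta)$. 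Then $m$ independent copies on disjoint regions with $\theta=\nu/m$ finish the proof by cumulant additivity. This is the paper's construction.

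Your closing arguments for $\cL(F_\nu)\neq\cL(Z_\nu)$ (discrete versus continuous law) and for $\Jfour(F_\nu)>0$ are fine.
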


The obstruction is not repaired by imposing convergence of the
fifth, sixth, or any fixed finite number of moments.  Thus centered
Gamma approximation has a weaker moment principle than the
lattice-constrained Poisson approximation criterion for nonnegative
integer-valued shifts established in \cite{Zhe26b}.  In a different
direction, Theorem~\ref{thm44}{\bf(b)} shows that $\Jfour(F_n)\to0$
is not necessary for centered Gamma convergence: convergence in law
and of every moment may coexist with a positive limiting fourth
add-one energy.

\medskip

We now turn to the second main contribution, which is not tied to the
Gamma target.  We compare a multiple Poisson integral with the
multiple Wiener integral having the same kernel.  The fourth add-one
energy controls the error and yields a quantitative invariance
principle.

\begin{theorem}
\label{thm4}
Fix an integer $q\geq1$.  Let $f\in L_s^2(\mu^q)$ and set
\noi
\begin{align*}
 F=I_q^\eta(f)\in\cC_q^\eta\cap L^4(\Omega),
 \quad
 G=I_q^W(f)\in\cC_q^W,
 \AND
 \sigma^2=\E[F^2]=\E[G^2]
 =q!\|f\|_{L^2(\mu^q)}^2.
\end{align*}
There exists a constant $K_q<\infty$, depending only on $q$, such
that, for every $h\in C^3(\R)$ with bounded derivatives up to order
three,
\noi
\begin{align}
 \left|\E[h(F)]-\E[h(G)]\right|
 \leq
 K_q\|h^{(3)}\|_\infty
 \sqrt{\sigma^2\Jfour(F)}.
 \label{thm5_1}
\end{align}
Moreover, there exists a constant $L_q<\infty$, depending only on
$q$, such that
\noi
\begin{align}
 \dW(F,G)
 \leq
 L_q\bigl\{\sigma^2\Jfour(F)\bigr\}^{1/6},
 \label{thm5_2}
\end{align}
where $\dW$ denotes the $1$-Wasserstein distance as in \eqref{def_dW}.
Consequently, if $F_n=I_q^\eta(f_n)$
and
 $G_n=I_q^W(f_n)$
satisfy
\noi
\begin{align} 
 \sup_n\E[F_n^2]<\infty
 \AND
 \Jfour(F_n)\longrightarrow0,
  \notag
\end{align}
then $\dW(F_n,G_n)\longrightarrow0$.

\end{theorem}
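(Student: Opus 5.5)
We reduce to regular kernels and then run a Lindeberg replacement argument over a tetrahedral cell decomposition. By the martingale-core closure of Section~\ref{SEC_23}, it suffices to prove \eqref{thm5_1} when $F$ lies in a finite Charlier chaos $\cC_q^\bigstar$. Concretely, we take a partition of a finite-measure set into cells $A_1,\dots,A_N$ and a bounded step kernel $f=\sum_{i_1,\dots,i_q} a_{i_1\cdots i_q}\ind_{A_{i_1}\times\cdots\times A_{i_q}}$. The approximations converge in $L^4$ together with their derivatives, so $\Jfour$ and $\sigma^2$ pass to the limit. The Gaussian side converges in $L^2$ by isometry.

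For such a step kernel, $F$ is a polynomial in the independent centered Poisson counts $\xi_i=\hat\eta(A_i)$, with Charlier-type diagonal corrections. $G$ is the same multilinear form in Gaussians $g_i=W(A_i)$, with Hermite corrections. We would refine the partition so that the diagonal contributions become negligible. Finer partitions are allowed because we may let cell masses tend to zero, and the Poisson diagonal terms are exactly what $\Jfour$ tracks.

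We then swap coordinates one cell at a time: $U_k$ uses Gaussian cells $1,\dots,k-1$ and Poisson cells $k,\dots,N$. In the $k$-th step we write $U_k=R_k+\xi_k D_k+(\text{diagonal in }\xi_k)$, where $R_k$ and $D_k$ are independent of cell $k$. We Taylor expand $h$ to third order around $R_k$. The first two moments of $\xi_k$ and $g_k$ agree (both equal $0$ and $\mu(A_k)$), so the zeroth, first and second order terms cancel. The remainder is bounded by $\|h^{(3)}\|_\infty\,\E[|\xi_k D_k|^3+|g_k D_k|^3]$, plus diagonal corrections.

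By Cauchy--Schwarz, each cubic term is at most $\E[(\xi_k D_k)^2]^{1/2}\E[(\xi_k D_k)^4]^{1/2}$. We use hypercontractivity on the Gaussian/mixed side to compare moments. Summing over $k$ and applying Cauchy--Schwarz again gives
\[
\Bigl(\sum_k \E[(\xi_kD_k)^2]\Bigr)^{1/2}\Bigl(\sum_k\E[(\xi_kD_k)^4]\Bigr)^{1/2}.
\]
The first factor is $\lesssim\sigma^2$. The second factor should be identified, up to constants depending on $q$, with $\int\E|D_zF|^4\,\mu(dz)$. On the Poisson side this works because $D_zF$ for $z\in A_k$ is essentially $D_k$ plus lower-order diagonal pieces, and $\E|\xi_k|^4\approx\mu(A_k)$ for small cells. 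On the Gaussian and mixed sides, $\E g_k^4=3\mu(A_k)^2$ is even smaller.

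\emph{Main obstacle.} The hardest step is controlling $\E[D_k^4]$ for the hybrid variables $U_k$ by the purely Poisson quantity $\Jfour(F)$. We would try a hypercontractive comparison showing that replacing Poisson coordinates by Gaussian ones does not increase fourth moments of multilinear forms beyond a factor $C_q$; this is the Nourdin--Peccati--Reinert step. One complication is that Poisson hypercontractivity degenerates for small cells. To avoid it, we would express fourth moments of multilinear forms by diagram formulas in which the Poisson diagrams dominate the Gaussian ones termwise, since the Poisson cumulants $\mu(A)$ are at least the Gaussian ones $0$. Nonnegativity of the kernel is not available, so we would bound each diagram via absolute values, i.e. Cauchy--Schwarz contraction bounds. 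This yields \eqref{thm5_1}.

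For \eqref{thm5_2}, given $1$-Lipschitz $h$, we smooth by $h_\delta=h*\phi_\delta$. Then $\|h-h_\delta\|_\infty\le\delta$ and $\|h_\delta^{(3)}\|_\infty\lesssim\delta^{-2}$, which gives $\dW(F,G)\le 2\delta+K\delta^{-2}\sqrt{\sigma^2\Jfour(F)}$. Optimizing with $\delta=(\sigma^2\Jfour)^{1/6}$ yields \eqref{thm5_2}. The final sequential statement follows immediately.
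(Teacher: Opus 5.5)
Your overall architecture is the paper's: martingale-core reduction, tetrahedral step kernels, a one-cell-at-a-time Lindeberg swap with third-order Taylor expansion, Cauchy--Schwarz to produce $\sqrt{\sigma^2\Jfour(F)}$, and Gaussian smoothing for \eqref{thm5_2}. However, the step you flag as the main obstacle is left open, and the diagram route you sketch does not close it.

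Write $\lambda_k=\mu(A_k)$. Let $V_k^\eta$ be your coefficient $D_k$ evaluated at the all-Poisson vector, so that $D_zF=\sum_k\ind_{A_k}(z)V_k^\eta$ and $\Jfour(F)=\sum_k\lambda_k\E[(V_k^\eta)^4]$. You need $\sum_k\lambda_k\E[D_k^4]\le C_q\Jfour(F)$ for the hybrid $D_k$. Termwise diagram domination fails because products of four kernel coefficients carry no sign. Already for $q=3$, where $D_k=\sum_{j<l}c_{jl}Z_jZ_l$ is bilinear, the index configuration $\{j,l\},\{j,l\},\{j,m\},\{l,m\}$ contributes a multiple of $c_{jl}^2c_{jm}c_{lm}\lambda_j\lambda_l\lambda_m$ on the Poisson side and nothing on the Gaussian side.

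Taking absolute values is legitimate: each coordinate moment $\E[Z_i^m]$, $m\le4$, is nonnegative, and the Poisson one dominates the Gaussian one. But it only yields $\sum_k\lambda_k\E[D_k^4]\le\Jfour(I_q^\eta(|f|))$. For $q\geq3$ this is not $\Jfour(F)$, which involves signed contractions of the type $f\star_2^1 f$ that may cancel. To return to $\Jfour(F)$ you would have to bound every diagram by the sign-free quantities $\int\|f_{\boldsymbol z}\|_2^4\,\mu^t(\dd\boldsymbol z)$, $1\le t\le q$. You would then have to control those with $t\ge2$ by the iterated-derivative propagation $\mathcal J_{4,t}(F)\le A_{q,t}\Jfour(F)$ of Lemma~\ref{lem_J4_prop}. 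That is a nontrivial input from \cite{Zhe26a} which your proposal never invokes, and ``Cauchy--Schwarz contraction bounds'' alone do not supply it.

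The paper closes this step with Lemma~\ref{lem_hyb_4}, which uses no diagrams and no sign information. Condition on the $\s$-field $\mathcal P$ of the Poisson coordinates. Given $\mathcal P$, $D_k$ is a polynomial of degree at most $q-1$ in independent Gaussians, so \eqref{Gau_hyper} gives $\E[D_k^4\mid\mathcal P]\le3^{2(q-1)}(\E[D_k^2\mid\mathcal P])^2$. By multilinearity, the conditional second moment depends on the Gaussian coordinates only through their variances. It is therefore unchanged when they are replaced by independent centered Poisson coordinates of the same variances, and conditional Jensen bounds its square by the corresponding conditional fourth moment. Taking expectations gives $\E[D_k^4]\le3^{2(q-1)}\E[(V_k^\eta)^4]$, hence $\sum_k\lambda_k\E[D_k^4]\le3^{2(q-1)}\Jfour(F)$ directly. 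Gaussian hypercontractivity is used only in the needed direction, so the degeneration of Poisson hypercontractivity on small cells never arises.

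A secondary point concerns the diagonal. The Taylor cancellation requires $Q$ to be exactly multilinear, so the diagonal cells must be deleted before swapping rather than carried along as ``diagonal corrections''. The deletion must also be shown negligible in $L^4$ on both sides and in $\Jfour$. The paper does this in Lemma~\ref{lem_tetra_core} using the product formula and \eqref{star_section} for a bounded kernel on a set of vanishing measure. It is not justified by the diagonal terms being ``what $\Jfour$ tracks''.
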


\begin{remark}\rm
(i) Theorem~\ref{thm4} is a target-free Poisson--Gaussian invariance
principle.  Its proof follows the Lindeberg replacement strategy of
Nourdin, Peccati, and Reinert \cite{NPR10}, but the hypotheses and the
conclusion are different.  Their homogeneous-sum comparison is
expressed through low influences and uniform moments of the
standardized coordinates; see \cite[Theorems~1.2 and~4.1]{NPR10}.
The qualitative Poisson universality theorem in
\cite[Theorem~3.4]{PZ14} is formulated for a discrete Poisson chaos
under the standing lower bound on the intensities in
\cite[Theorem~3.2 and Remark~3.3]{PZ14}.  Theorem~\ref{thm4} directly
compares the two laws, applies to every fixed order on a general
$\s$-finite control space, and permits cell masses tending to zero.
For such triangular Poisson coordinates, the standardized moments may
diverge.  We instead keep the cell counts unnormalized and sum the
coordinatewise errors with their cell masses.  The resulting quantity
is exactly controlled by $\Jfour$.

(ii) The condition $\Jfour(F_n)\to0$ cannot be omitted.  The rainbow
kernels in Remark~\ref{rem_inv_fail} have same-kernel Gaussian
integrals converging to a normal law, whereas their Poisson integrals
converge to a centered compound-Poisson law and have fourth add-one
energy bounded away from zero.

(iii) Theorem~\ref{thm4} also forces asymptotic symmetry in odd chaos
orders.  More precisely, if $q$ is odd,
$\sup_n\E[F_n^2]<\infty$, $\Jfour(F_n)\to0$, and $F_n\to Y$ in law,
then $Y$ is symmetric.  Thus, every nonsymmetric limit in an odd
Poisson chaos must arise through a non-diffuse mechanism.
\end{remark}

The exponent $1/6$ in \eqref{thm5_2} results from smoothing a
Lipschitz test function.  Proposition~\ref{prop17} shows that a
Wasserstein estimate of the same square-root order as the smooth-test
bound \eqref{thm5_1} is false, even at fixed positive variance.  The
examples do not determine the optimal exponent in the Wasserstein
bound.

\begin{proposition}
\label{prop17}
The following two assertions hold on suitable atomless $\s$-finite
control spaces.

\begin{enumerate}[label=\textup{(\alph*)},leftmargin=2.4em]

\item
For every integer $q\geq1$, there exist same-kernel pairs
$(F_\lambda,G_\lambda)$, $0<\lambda<1$, in the $q$th Poisson and
Gaussian chaoses such that
\noi
\begin{align}
 \E[F_\lambda^2]=\E[G_\lambda^2]
 &=\lambda,
 \label{prop17_1}\\
 \Jfour(F_\lambda)
 &=q\lambda(1+3\lambda^{1/q})^{q-1},
 \label{prop17_2}
\end{align}
and, as $\lambda\downarrow0$,
\noi
\begin{align}
 d_2(F_\lambda,G_\lambda)
 &\asymp_q\lambda,
 \label{prop17_3}\\
 \dW(F_\lambda,G_\lambda)
 &\asymp_q\sqrt{\lambda}.
 \label{prop17_4}
\end{align}
In particular, neither
$\dW(F,G)\leq C_qd_2(F,G)$ nor
$\dW(F,G)\leq C_q\sqrt{\E[F^2]\Jfour(F)}$
can hold uniformly under only an upper bound on the variance.

\item
Fix $q=2$.  There exist
same-kernel pairs $(F_n,G_n)$ in the second Poisson and Gaussian
chaoses, together with a sequence $\eps_n\downarrow0$, such that

\noi
\begin{align*}
 \E[F_n^2]=\E[G_n^2]=2,
 \quad
 \Jfour(F_n)\asymp\eps_n^4,
 \AND
 \dW(F_n,G_n)\gtrsim\eps_n^{3/2}.
\end{align*}
Consequently,
\noi
\begin{align*}
 \frac{\dW(F_n,G_n)}
 {\sqrt{\E[F_n^2]\Jfour(F_n)}}
 \longrightarrow\infty.
\end{align*}

Thus, a Wasserstein estimate of the same order as
\eqref{thm5_1} fails even at fixed positive variance.
\end{enumerate}
\end{proposition}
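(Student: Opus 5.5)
We prove (a) with an explicit product construction computed by hand. For (b) we reuse the same one-point mechanism and embed it in a fixed-variance second-chaos element. Part (b) is only partly worked out: the scalings below are the target, and the step that makes them work is left open.

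\textbf{Part (a).}
Take an atomless $\s$-finite space containing disjoint sets $A_1,\dots,A_q$ with $\mu(A_i)=m$, where $m:=\lambda^{1/q}$. Let $f$ be the symmetrization of $\ind_{A_1}\ot\cdots\ot\ind_{A_q}$, with the constant chosen so that
\[
 F_\lambda=I_q^\eta(f)=\prod_{i=1}^q\bigl(\eta(A_i)-m\bigr),
 \qquad
 G_\lambda=I_q^W(f)=\prod_{i=1}^q W(A_i).
\]
Write $N_i=\eta(A_i)\sim\Pois(m)$, independent. Then $\E[F_\lambda^2]=\E[G_\lambda^2]=m^q=\lambda$, which is \eqref{prop17_1}. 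For $z\in A_j$ we have $D_zF_\lambda=\prod_{i\neq j}(N_i-m)$. Using $\E[(N-m)^4]=m+3m^2$,
\[
 \Jfour(F_\lambda)=q\,m\,(m+3m^2)^{q-1}=q\lambda\bigl(1+3\lambda^{1/q}\bigr)^{q-1},
\]
which is \eqref{prop17_2}.

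For \eqref{prop17_4} the key point is that absolute values factorize over independent factors.
\begin{itemize}
\item Gaussian side: $\E|G_\lambda|=(2m/\pi)^{q/2}\asymp\sqrt\lambda$.
\item Poisson side: $|N-m|$ equals $m$ with probability about $1-m$ and is $O(1)$ on an event of probability about $m$, so $\E|N-m|\asymp m$. Hence $\E|F_\lambda|\asymp m^q=\lambda$.
\end{itemize}
Since $x\mapsto|x|$ is $1$-Lipschitz, $\dW\ge\E|G_\lambda|-\E|F_\lambda|\gtrsim\sqrt\lambda$. In the other direction, $\dW\le\E|F_\lambda|+\E|G_\lambda|\lesssim\sqrt\lambda$.

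For \eqref{prop17_3}, the upper bound comes from a second-order Taylor expansion of $h$ at $0$. The terms $h'(0)\E[F_\lambda]$ and $h'(0)\E[G_\lambda]$ vanish because both variables are centered. What remains is $O(\|h''\|_\infty\E[F_\lambda^2])=O(\lambda)$, and likewise for $G_\lambda$.

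For the lower bound on $d_2$ we use $h(x)=\sqrt{1+x^2}-1$, whose first and second derivatives are bounded.
\begin{itemize}
\item Gaussian side: $\E[h(G_\lambda)]=\lambda/2+O(\E[G_\lambda^4])=\lambda/2+O(\lambda^2)$.
\item Poisson side: the event $\{N_1=\cdots=N_q=1\}$ has probability about $\lambda$, and on it $|F_\lambda|\approx1$. This event carries a fixed fraction $\theta_q\in(0,1]$ of the variance. On the complement, $h(x)=x^2/2+O(x^4)$ with negligible error.
\end{itemize}
Therefore $\E[h(F_\lambda)]=\lambda\bigl(\tfrac12-\theta_q(\tfrac32-\sqrt2)\bigr)+o(\lambda)$, and the gap with the Gaussian side is $\asymp\lambda$. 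The two "in particular" claims follow by comparing exponents: $\sqrt\lambda\gg\lambda$, and $\sqrt{\lambda\cdot\Jfour(F_\lambda)}\asymp\lambda$.

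\textbf{Part (b).}
The target is to scale the $q=1$ case of the mechanism in (a) as $V_1=c\,(\eta(A)-m)$ with $c^2=\eps_n$ and $m=\eps_n^2$. Then:
\begin{itemize}
\item the fourth add-one energy of this piece is $c^4m=\eps_n^4$;
\item $|\E|V_1|-\E|V_1'||\asymp c\sqrt m=\eps_n^{3/2}$, where $V_1'=c\,W(A)$ is the Gaussian analogue.
\end{itemize}
This piece has variance only $\eps_n^3$, so it must be completed to a second-chaos element of variance $2$ without destroying the $\eps_n^{3/2}$ gap. The completing part is to be built from many tiny cells, so that its own add-one energy is $o(\eps_n^4)$; add-one energies of independent pieces simply add.

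\textbf{Main obstacle.}
This completion is where I expect the real difficulty. If a diffuse bulk of order-one variance is added independently, it convolves away the gap down to $d_2$-order, about $c\,m=\eps_n^{5/2}$, which is too small. The multiplicative device $F=U\cdot V$, with $U$ a diffuse first-chaos integral on a disjoint support, keeps the factorization $\E|UV|=\E|U|\,\E|V|$ for the test function $|x|$. However, fixing $\Var(F)=2$ through this product inflates the add-one energy from the $V$-direction by a factor $\E[U^4]/\Var(U)^2$ times $\Var(F)^2$.

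I would therefore first try to put the bulk inside $V$ as an independent first-chaos summand $V_2$, and test against a function adapted to the lattice scale $c$ of $V_1$ rather than against $|x|$. The aim is that the smoothing by $U\cdot V_2$ does not fully average the lattice structure of $V_1$. Whether the exponent $3/2$ survives this choice is exactly the point I cannot settle at the planning stage. If it does, the displayed ratio $\dW/\sqrt{2\,\Jfour}\gtrsim\eps_n^{3/2}/\eps_n^2\to\infty$ follows at once.
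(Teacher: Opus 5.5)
Your part (a) is correct and uses the paper's construction: the same product kernel, the same $\Jfour$ computation, the Taylor bound $d_2\le\lambda$, and the test $|x|$ for $\dW$. For $d_2\gtrsim\lambda$ the paper uses a Huber-type function $h_0$ instead of $\sqrt{1+x^2}-1$; yours works as well, with one correction. The $O(x^4)$ remainder is not automatically negligible, because $\E[F_\lambda^4]=\lambda(1+3\lambda^{1/q})^q$ is itself of order $\lambda$. It is simpler to use the global bound $h(x)\le x^2/2$ off the event $\{N_1=\cdots=N_q=1\}$. That event carries a fraction $1+o(1)$ of the variance, so $\E[h(F_\lambda)]\le(\sqrt2-1)\lambda+o(\lambda)$. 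On the Gaussian side, $x^2/2-h(x)\le x^4/8$ gives $\E[h(G_\lambda)]\ge\lambda/2-3^q\lambda^2/8$.

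Part (b), however, is not proved, and the obstacle you describe is real for your route. If the bulk has a smooth density, the smoothed test function has bounded third derivative. The Poisson--Gaussian difference of a small perturbation with matching first two moments is then controlled by third absolute moments, which is far below $\eps^{3/2}$.

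The missing idea is a bulk whose law has a \emph{singular edge}, paired with a Lipschitz test localized at that edge. The paper takes the diffuse element $\bar F_m=I_2^\eta(g_m^{\ot2})$ with $g_m=m^{-1/2}\ind_{B_m}$. Its same-kernel Gaussian partner has exactly the law of $Y=N^2-1$, while $\dW(\bar F_m,Y)\to0$ and $\Jfour(\bar F_m)=16m^{-1}(3+m^{-1})\to0$. The density of $Y$ blows up like $(y+1)^{-1/2}$ at $-1$. Take $h(x)=(-1-x)_+$, put $\Psi(t)=\E[(t-N^2)_+]$, so that $\Psi(t)\sim\frac{4}{3\sqrt{2\pi}}t^{3/2}$, and let $R$ be independent of $N$. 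Then
\[
 \eps^{-3/2}\E[h(Y+\eps R)]=\eps^{-3/2}\E\bigl[\Psi\bigl(\eps(-R)_+\bigr)\bigr]\longrightarrow\frac{4}{3\sqrt{2\pi}}\E[(-R)_+^{3/2}].
\]
The perturbation is a \emph{fixed-shape} same-kernel pair on sets disjoint from $B_m$:
\[
 U_a=a^{-1}(\eta(A_1)-a)(\eta(A_2)-a),\qquad V_a=a^{-1}W(A_1)W(A_2).
\]
Fix $a$ so small that $\E[(-U_a)_+^{3/2}]\neq\E[(-V_a)_+^{3/2}]=\frac12(\E|N|^{3/2})^2$. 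This gives $\dW(Y+\eps U_a,Y+\eps V_a)\ge c_a\eps^{3/2}$.

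Now choose $m=m(\eps)$ with $\dW(\bar F_m,Y)=o(\eps^{3/2})$ and $\Jfour(\bar F_m)=o(\eps^4)$. The contraction $\dW(X+R,Y+R)\le\dW(X,Y)$ transfers the gap to the pair $(\bar F_m+\eps U_a,\bar G_m+\eps V_a)$. Block additivity gives $\Jfour=\Jfour(\bar F_m)+\eps^4\Jfour(U_a)\asymp\eps^4$, and a rescaling by $r_\eps\to1$ fixes the variance at $2$.

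So no shrinking cells are needed in the perturbation. The exponent $3/2$ comes from the edge singularity of the bulk, not from the small-cell mechanism of part (a). Your first-chaos piece $c(\eta(A)-m)$ would in any case still have to be lifted into the second chaos.
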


The same-kernel comparison also transfers the third and fourth
moments.  The constants $B_q$ and $C_q$ in the next result are
specified in \eqref{Bq_def} and \eqref{Cq_def} below.

\begin{proposition}
\label{prop18}
Fix $q\geq1$ and $\nu>0$.  If
\noi
\begin{align*}
 F=I_q^\eta(f)\in\cC_q^\eta\cap L^4(\Omega),
 \quad
 G=I_q^W(f)\in\cC_q^W,
 \AND
 \E[F^2]=\E[G^2]=2\nu,
\end{align*}
then
\noi
\begin{align}\label{prop18_1}
    \left|\E[F^3]-\E[G^3]\right|
 \leq B_q\sqrt{2\nu\Jfour(F)}
\end{align}
and
\begin{align}\label{prop18_2}
\big|  \sqrt{\E[F^4]}-\sqrt{\E[G^4]} \big|
 \leq C_q\sqrt{\Jfour(F)}.
\end{align}
\end{proposition}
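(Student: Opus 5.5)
We follow the Lindeberg replacement strategy behind Theorem~\ref{thm4}, now applied to the polynomial test functions $x\mapsto x^3$ and $x\mapsto x^4$, which are not in $C^3_b$. By the martingale-core closure of Section~\ref{SEC_23}, we first reduce to finite Charlier chaoses $\cC_q^\bigstar$. There $f$ is a bounded step kernel on finitely many disjoint cells $A_1,\dots,A_m$ of finite masses $\mu_i=\mu(A_i)$. Then $F$ is a homogeneous tetrahedral polynomial $Q(\widehat N_1,\dots,\widehat N_m)$ of degree $q$ in the centered counts $\widehat N_i=\eta(A_i)-\mu_i$, and $G=Q(\sqrt{\mu_1}\xi_1,\dots,\sqrt{\mu_m}\xi_m)$ with i.i.d.\ standard normals $\xi_i$ (after the usual diagonal-free reduction, which is exact for off-diagonal step kernels). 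Both sides of \eqref{prop18_1}--\eqref{prop18_2} are continuous under the $L^4$ convergence supplied by the martingale core, as is $\Jfour$ via the graph-norm convergence of $D$. It therefore suffices to prove the bounds for such $F$.

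Next we run the hybrid interpolation. Let $U_i$ denote the variable in which the first $i-1$ coordinates are Gaussian and the rest are Poisson, so that we can write $U_i=R_i+X_i S_i$ with $X_i\in\{\widehat N_i,\sqrt{\mu_i}\xi_i\}$ independent of $(R_i,S_i)$. Here $S_i$ is the multilinear coefficient, which on the Poisson side is the cell-averaged derivative $D_zF$, $z\in A_i$ (up to the already-replaced coordinates). Expanding $(R_i+XS_i)^p$ for $p=3,4$, the terms of order $0,1,2$ in $X$ cancel, since $\widehat N_i$ and $\sqrt{\mu_i}\xi_i$ share mean and variance $\mu_i$. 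What remains is
\[
\E[\widehat N_i^3]=\mu_i,\qquad \E[\widehat N_i^4]-3\mu_i^2=\mu_i,
\]
paired with $\E[R_iS_i^3]$ and $\E[S_i^3]$ for $p=4$, and with $\E[S_i^3]$ for $p=3$. For $p=3$ we get $|\E F^3-\E G^3|\le\sum_i\mu_i|\E S_i^3|$. By Cauchy--Schwarz this is at most $(\sum_i\mu_i\E S_i^2)^{1/2}(\sum_i\mu_i\E S_i^4)^{1/2}$. The first factor is at most $q\sigma^2$, by hypercontractivity-free $L^2$ isometry, since $\int\E|D_zF|^2\,\mu(dz)=q\E F^2$ and Gaussian replacements preserve second moments. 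The second factor is controlled by $\Jfour(F)$.

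The main obstacle is exactly this step: bounding $\sum_i\mu_i\E[S_i^4]$ for the \emph{hybrid} coefficients by $C\Jfour(F)$, uniformly in the number of cells. Mixing Gaussian and Poisson coordinates is not a fourth-moment contraction. Our first attempt is the following. We write $\E[S_i^4]$ via the product formula as a sum of squared contraction norms of the kernel $f(z,\cdot)$, with Poisson diagonal terms appearing only in Poisson coordinates. Each such sum is dominated term by term by the all-Poisson expression, which equals $\Jfour$ up to the $q$-dependent combinatorial constants $B_q,C_q$. Because all these terms are nonnegative sums of squares, monotonicity in the set of Poisson coordinates follows. For the fourth moment the remainder is $\sum_i\mu_i(\E[R_iS_i^3]+c\,\E S_i^4)$. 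The $\E S_i^4$ part gives $C\Jfour$. For the cross part, Hölder gives
\[
\sum_i\mu_i\E|R_iS_i^3|\le \|R\|_4\Bigl(\sum_i\mu_i\E S_i^4\Bigr)^{3/4}\Bigl(\sum_i\mu_i\Bigr)^{1/4},
\]
and this fails because $\sum_i\mu_i$ is unbounded. Instead we will use the weighted form
\[
\E|R_iS_i^3|\le\|R_iS_i\|_2\|S_i^2\|_2,
\qquad \sum_i\mu_i\|R_iS_i\|_2^2\lesssim \max_j\|U_j\|_4^2\,\cdot q\sigma^2\text{-type}.
\]
Here $\|R_iS_i\|_2\le\|R_i\|_4\|S_i\|_4$, and Cauchy--Schwarz yields
\[
\Bigl|\E F^4-\E G^4\Bigr|\le C\bigl(\max_i\|U_i\|_4^2+\sqrt{\textstyle\sum_i\mu_i\E S_i^4}\bigr)\sqrt{\Jfour}.
\]
Writing $a=\|F\|_4^2$, $b=\|G\|_4^2$, and bounding the intermediate norms by $\max(a,b)$, we obtain $|a^2-b^2|\le C(a+b)\sqrt{\Jfour}+C\Jfour$. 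Dividing by $a+b$ then gives $|a-b|\le C\sqrt{\Jfour}$, and we will use this to obtain \eqref{prop18_2}.

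We still have to justify the bound $\max_i\|U_i\|_4\lesssim\|F\|_4+\|G\|_4+\sqrt{\Jfour}$. We will do this by induction over the hybrids, applying the same one-step estimate to the partial interpolation. Alternatively, we will rely on the fact that the fourth moment of a hybrid is a nonnegative combination of contraction norms dominated by that of $F$.
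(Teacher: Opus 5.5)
The third-moment interpolation can be repaired (see below), but the fourth-moment step has a genuine gap. The telescoping gives $\E[F^4]-\E[G^4]=\sum_i\mu_i\bigl(4\E[R_iS_i^3]+\E[S_i^4]\bigr)$, so everything hinges on bounding the cross sum $\sum_i\mu_i\E[R_iS_i^3]$ uniformly in the partition.

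Your key claim $\sum_i\mu_i\|R_iS_i\|_2^2\lesssim\max_j\|U_j\|_4^2\,q\sigma^2$ is false. Take $q=2$ and $F=c\,(\eta(A)-\alpha)(\eta(B)-1)$ with $\mu(A)=\alpha\le1$ and $\mu(B)=1$. Split $B$ into $n$ cells of mass $1/n$, and replace these $B$-cells before $A$. At each $B$-cell, $S_i=c\,(\eta(A)-\alpha)$ and $R_i=S_i\Sigma_i$, where $\Sigma_i$ is independent of $S_i$ with variance $1-1/n$. Hence $\sum_i\mu_i\E[R_i^2S_i^2]=c^4(\alpha+3\alpha^2)(1-1/n)$, whereas $q\sigma^2=2c^2\alpha$ and $\max_j\|U_j\|_4^2\le4c^2\sqrt{\alpha}$. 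The ratio therefore grows like $\alpha^{-1/2}$.

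The justification you give, $\|R_iS_i\|_2\le\|R_i\|_4\|S_i\|_4$, loses even more. It leads to $\sum_i\mu_i\|S_i\|_4^2$, which is not comparable with $\sum_i\mu_i\|S_i\|_2^2=q\sigma^2$, because centered Poisson coordinates of small mean are not hypercontractive. For the rainbow sums \eqref{rainbow_def} with $q\geq2$ this sum diverges like $a_m^{-(q-1)/2}$, while $\sigma^2$ and $\Jfour$ stay bounded.

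The right order involves $\sqrt{\Jfour(F)}$. In the all-Poisson case, $\sum_i\mu_i\E[R_i^2S_i^2]\le\E\bigl[F^2\int(D_zF)^2\mu(\dd z)\bigr]$. Bounding $\bigl\|\int(D_zF)^2\mu(\dd z)\bigr\|_2$ in terms of $\sigma^2$ and $\sqrt{\Jfour(F)}$ already needs the Poisson product formula and the off-diagonal contraction bounds of Lemma~\ref{lem_offdiag}. For genuinely mixed hybrids you give no argument, so \eqref{prop18_2} is not established. Your final division step also uses $\Jfour(F)\lesssim\|F\|_4^2\sqrt{\Jfour(F)}$; this holds by \eqref{D4_prelim} but should be stated.

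There are several further problems. First, the reduction is not exact for a general element of $\cC_q^\bigstar$. Its step kernel may charge diagonal blocks, which produces Charlier polynomials of single counts rather than multilinear monomials. You therefore need the refinement and diagonal-removal argument of Lemma~\ref{lem_tetra_core}.

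Second, the asserted monotonicity of $\E[S_i^4]$ in the set of Poisson coordinates does not follow from a sum-of-squares structure. $\E[S_i^4]$ is a sum of squared norms of \emph{sums} of contractions; in coordinates, the terms carrying $\E[x_j^3]=\mu_j$ come with coefficients of either sign. The correct tool is the conditional Gaussian hypercontractivity comparison of Lemma~\ref{lem_hyb_4}, at cost $9^{q-1}$. The same lemma also closes your open bound: $\|U_i\|_4\le3^{q/2}\|F\|_4$.

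Third, even after these repairs the interpolation yields only $|\E[F^3]-\E[G^3]|\le3^{q-1}\sqrt{q}\,\sqrt{2\nu\Jfour(F)}$. This is weaker than the stated constant $B_q$ of \eqref{Bq_def}: for $q=2$ you get $3\sqrt2$, against $B_2=1/\sqrt2$.

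The paper's proof uses no interpolation. By \eqref{Poi_prod_full}, $F^2-\mathsf T(G^2)$ consists exactly of the Poisson-specific contractions $f\star_r^\ell f$ with $\ell<r$. Each of these is bounded by a multiple of $\sqrt{A_{q,r-\ell}\Jfour(F)}$ through Lemma~\ref{lem_offdiag}. Pairing with $F$ gives \eqref{prop18_1}. Since $\|\mathsf T(G^2)\|_2=\|G^2\|_2$, the reverse triangle inequality then gives \eqref{prop18_2} directly in square-root form. This route produces the explicit constants and needs neither a step-kernel reduction nor hybrid moments.
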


\begin{remark}\rm
Theorem~\ref{thm4} and Proposition~\ref{prop18} also recover the
Poisson-to-Gaussian transfer principle in
\cite[Proposition~1.5]{DVZ18} and give an alternative proof of the
qualitative normal fourth-moment theorem on a fixed Poisson chaos.
Indeed, let $F_n=I_q^\eta(f_n)$ satisfy
$\E[F_n^2]\to1$ and $\E[F_n^4]\to3$, put
$G_n=I_q^W(f_n)$, and set $2\nu_n=\E[F_n^2]$.  Then
\noi
\begin{align*}
 \cum_4(F_n)&\longrightarrow0,
 \quad
 \Jfour(F_n)\leq(4q-3)\cum_4(F_n)\longrightarrow0,\\
 \E[G_n^4]&\longrightarrow3,
 \quad
 G_n\xrightarrow{\mathrm d}N\sim\NN(0,1).
\end{align*}
The first line follows from the moment assumptions and
\eqref{D4_prelim}.  Applying \eqref{prop18_2} with $\nu=\nu_n$
gives the Gaussian fourth-moment convergence, and the Gaussian
fourth-moment theorem yields the last convergence.  Theorem~\ref{thm4}
then gives
$\dW(F_n,G_n)\to0$, and hence $F_n\to N$ in law.  This argument works
on a general $\s$-finite control space.  It is qualitative: inserting
\eqref{D4_prelim} into \eqref{thm5_2} gives a sixth-root bound in the
fourth cumulant, rather than the optimal square-root normal bound in
\cite[Theorem~1.2]{DVZ18}.

For Gamma approximation, Proposition~\ref{prop18} shows that, in the
diffuse regime, the Poisson third--fourth defect follows its Gaussian
counterpart.  In odd order, this leads to the obstruction in
the following result.

\end{remark}

\begin{proposition}\label{prop19}
Fix an integer $q\geq1$ and $\nu>0$.  Let
$F_n=I_q^\eta(f_n)\in\cC_q^\eta\cap L^4(\Omega)$ and
$G_n=I_q^W(f_n)\in\cC_q^W$ satisfy
$\E[F_n^2]=\E[G_n^2]=2\nu$ for every $n\geq1$.

\smallskip
{\rm(i)} Suppose that $\Jfour(F_n)\longrightarrow0$.  Then
\noi
\begin{align}
 \Del_\nu(F_n)-\Del_\nu(G_n)\longrightarrow0.
 \label{prop19_1}
\end{align}
If, in addition, $q$ is even, then the following equivalences hold:
\noi
\begin{align}
 \Del_\nu(F_n)\longrightarrow0
 \quad\Longleftrightarrow\quad
 G_n\xrightarrow{\mathrm d}Z_\nu
 \quad\Longleftrightarrow\quad
 F_n\xrightarrow{\mathrm d}Z_\nu.
 \label{prop19_2}
\end{align}

\smallskip
{\rm(ii)} Suppose that $q$ is odd.  If
$F_n\xrightarrow{\mathrm d}Z_\nu$, then, with $B_q$ as in
\eqref{Bq_def},
\noi
\begin{align}
 \liminf_{n\to\infty}\Jfour(F_n)
 \geq\frac{32\nu}{B_q^2}>0.
 \label{prop19_3}
\end{align}
In particular, centered Gamma convergence in a fixed odd Poisson chaos
cannot occur in the diffuse regime $\Jfour(F_n)\to0$.
\end{proposition}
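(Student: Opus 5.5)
For part (i), the plan is to write $\Del_\nu(F_n)-\Del_\nu(G_n)=\{\E[F_n^4]-\E[G_n^4]\}-12\{\E[F_n^3]-\E[G_n^3]\}$; the constant terms cancel because both variances equal $2\nu$. The third-moment difference is at most $B_q\sqrt{2\nu\Jfour(F_n)}\to0$ by \eqref{prop18_1}. For the fourth moments I would use \eqref{prop18_2} together with the factorization
\[
|\E[F_n^4]-\E[G_n^4]|=\big|\sqrt{\E[F_n^4]}-\sqrt{\E[G_n^4]}\big|\bigl(\sqrt{\E[F_n^4]}+\sqrt{\E[G_n^4]}\bigr).
\]
On a fixed Gaussian chaos, hypercontractivity gives $\E[G_n^4]\le 3^{2q}(2\nu)^2$, so $\sqrt{\E[G_n^4]}$ is bounded. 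Then \eqref{prop18_2} bounds $\sqrt{\E[F_n^4]}$ as well, and the product tends to zero. This proves \eqref{prop19_1}.

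For the equivalences \eqref{prop19_2} with $q$ even, I would first apply the Nourdin--Peccati criterion \eqref{NP_Gam} on the $q$th Gaussian chaos with variance exactly $2\nu$: it gives $G_n\to Z_\nu$ if and only if $\Del_\nu(G_n)\to0$. By \eqref{prop19_1}, this is equivalent to $\Del_\nu(F_n)\to0$, which settles the first equivalence. For the second, Theorem~\ref{thm4} gives $\dW(F_n,G_n)\to0$ because the variances are bounded and $\Jfour(F_n)\to0$. Hence $F_n$ and $G_n$ have the same distributional limits, so $G_n\to Z_\nu$ if and only if $F_n\to Z_\nu$. (As a cross-check, Theorem~\ref{thm2} also gives $\Del_\nu(F_n)\to0\Rightarrow F_n\to Z_\nu$ directly.) This route needs no uniform integrability.

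For part (ii), fix $q$ odd. Since $G_n$ lies in an odd Gaussian chaos, it is symmetric in law: $I_q^W(f)$ under $W\mapsto -W$ becomes $(-1)^qI_q^W(f)$. Therefore $\E[G_n^3]=0$, and \eqref{prop18_1} yields
\[
|\E[F_n^3]|\le B_q\sqrt{2\nu\Jfour(F_n)}.
\]
So it suffices to show $\liminf_n\E[F_n^3]\ge 8\nu=\E[Z_\nu^3]$; squaring then gives $\liminf\Jfour(F_n)\ge 64\nu^2/(2\nu B_q^2)=32\nu/B_q^2$.

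The main obstacle is that convergence in law alone does not give convergence of third moments. I would argue via uniform integrability of $|F_n|^3$ along a subsequence realizing the $\liminf$, splitting into two cases.
\begin{itemize}
\item If $\Jfour(F_n)$ is unbounded along that subsequence, the inequality is trivial.
\item Otherwise $\Jfour(F_n)$ is bounded. Then \eqref{prop18_2}, combined with the hypercontractive bound on $\E[G_n^4]$, gives $\sup_n\E[F_n^4]<\infty$. Hence $\{|F_n|^3\}$ is uniformly integrable, so $\E[F_n^3]\to\E[Z_\nu^3]=8\nu$ along the subsequence.
\end{itemize}
Here $\E[Z_\nu^3]=8\cdot2\cdot(\nu/2)=8\nu$, since the third cumulant of $X_{\nu/2,1}$ is $2\cdot\nu/2$. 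In both cases the bound \eqref{prop19_3} holds. The final claim follows immediately, since $32\nu/B_q^2>0$ rules out $\Jfour(F_n)\to0$.
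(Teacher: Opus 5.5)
Your proposal is correct and follows the paper's proof essentially step for step. For (i) you transfer the defect via \eqref{prop18_1}, \eqref{prop18_2} and Gaussian hypercontractivity, and for even $q$ you combine the Nourdin--Peccati criterion \eqref{NP_Gam} with Theorem~\ref{thm4}; for (ii) you use the vanishing Gaussian third moment in odd order, plus uniform integrability of $|F_n|^3$ along a subsequence realizing $\liminf_n\Jfour(F_n)$. The only blemish is your sentence saying it suffices to show $\liminf_n\E[F_n^3]\geq 8\nu$ along the whole sequence, which is stronger than what you actually prove, but the subsequence argument you then give is exactly what is needed.
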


\begin{remark}\rm
(i) Proposition~\ref{prop19}{\rm(ii)} also yields a uniform metric
separation.  More precisely, there exist
$\eps_{q,\nu}>0$ and $c_{q,\nu}>0$ such that every
$F\in\cC_q^\eta\cap L^4(\Omega)$ with $\E[F^2]=2\nu$ and
$\Jfour(F)\leq\eps_{q,\nu}$ satisfies
\noi
\begin{align*}
 \dW(F,Z_\nu)\geq d_2(F,Z_\nu)\geq c_{q,\nu}.
\end{align*}
Indeed, otherwise one could construct a sequence contradicting
\eqref{prop19_3}.

(ii) The proof also gives the following direct barrier for the mixed
Gamma defect when $q$ is odd:
\noi
\begin{align*}
 \Del_\nu(F)\geq48\nu-12B_q\sqrt{2\nu\Jfour(F)}
 \quad\text{whenever }\E[F^2]=2\nu.
\end{align*}
Consequently, $\Jfour(F_n)\to0$ implies
$\liminf_n\Del_\nu(F_n)\geq48\nu$, whereas
$\Del_\nu(F_n)\to0$ implies
$\liminf_n\Jfour(F_n)\geq8\nu/B_q^2$.
\end{remark}

\begin{remark}\rm
The difference between the two regimes can also be seen directly at
the level of the Gamma carr\'e-du-champ.  In the diffuse regime,
Theorem~\ref{thm2} shows that the Gamma carr\'e-du-champ relation
becomes asymptotically exact.  The rare-jump construction in
Theorem~\ref{thm44}{\bf(b)} behaves differently.  There, a
Gauss--Laguerre discretization first approximates the Gamma L\'evy
measure by finite atomic measures, and the rainbow construction then
realizes the corresponding centered compound-Poisson laws inside a
fixed Poisson chaos; see Sections~\ref{SEC_35} and~\ref{SEC_36}.

More precisely, for the sequence $(R_n)_n$ in
Theorem~\ref{thm44}{\bf(b)},
\noi
\begin{align*}
 R_n\xrightarrow{\mathrm d}Z_\nu,
 \quad
 \E[R_n^k]\longrightarrow\E[Z_\nu^k]
 \quad\text{for every }k\geq1,
 \AND
 \Jfour(R_n)\longrightarrow48q\nu.
\end{align*}
Nevertheless, we deduce from \eqref{intro_Gam_cl} that
\noi
\begin{align*}
 \liminf_{n\to\infty}
 \Var\left(2R_n-\frac1q\Gam(R_n,R_n)\right)
 \geq\frac{4\nu}{q}.
\end{align*}
Thus, even though $R_n$ converges to $Z_\nu$ in law and in all
moments, the diffusion-type Gamma carr\'e-du-champ relation does not
become asymptotically exact.  This makes precise the sense in which
$\Jfour$ distinguishes the mechanism of approximation rather than
the limiting distribution.
\end{remark}

This distinction is visible even for a fixed target and a fixed chaos
order.  In Proposition~\ref{prop_diff_ex}, we construct, for every integer
$d\geq1$, a sequence in the second Poisson chaos converging to $Z_d$
in law and in all moments with $\Jfour\to0$.  By contrast,
Theorem~\ref{thm44}{\bf(b)}, with $q=2$ and $\nu=d$, gives a
sequence with the same limiting law and the same moment convergence
but with $\Jfour\to96d$.

\medskip
{\bf Organization of the paper.}
Section~\ref{SEC2} collects the Poisson and Gaussian chaos notation,
the centered Gamma identities, the distance estimates, and the
martingale-core approximation.  Section~\ref{SEC3} proves the closed
centered Gamma estimates and the diffuse four-moment criterion.
Section~\ref{SEC4} treats the rare-jump regime, including the rainbow
lift, the finite- and all-moment obstructions, and the counterexample
to unrestricted Poisson--Gaussian transfer.  Section~\ref{SEC5} proves
the same-kernel invariance principle, shows that the smooth-test order
does not extend to Wasserstein distance, and establishes the
moment-transfer and Gamma parity results.

\section{Preliminaries}
\label{SEC2}

We collect the Poisson and Gaussian chaos identities, centered Gamma
facts, distance estimates, and martingale-core approximation used in
the later sections.

\subsection{Poisson chaos and Malliavin operators}
\label{SEC_21}

Let $(\cZ,\sZ,\mu)$ be a $\s$-finite measure space,
and let $\eta$ be a
Poisson random measure with intensity measure $\mu$
on a probability space
$(\Omega,\cF,\PP)$.  Let $\widehat\eta=\eta-\mu$ denote
the compensated Poisson random measure.
For $q\geq1$,
let $L_s^2(\mu^q)$ be the real Hilbert space of symmetric kernels
in $L^2(\cZ^q, \mu^q)$, and write $I_q^\eta(f)$ for the multiple Poisson
integral of $f$ with respect to $\widehat\eta$.
The $q$th Poisson Wiener chaos is given by

\noi
\begin{align*}
 \cC_q^\eta=\{I_q^\eta(f):f\in L_s^2(\mu^q)\}
 \AND
 \cC_0^\eta=\R.
\end{align*}

We let $J_q$ denote the orthogonal projection onto $\cC_q^\eta$.
The Wiener--It\^o decomposition is fundamental for stochastic
analysis on Poisson space:

\noi
\begin{align*}
 L^2(\Omega, \s\{\eta\}, \PP)
 =\bigoplus_{q=0}^{\infty}\cC_q^\eta.
\end{align*}
It asserts that every $F\in L^2(\sigma\{\eta\})$ admits an
$L^2$-orthogonal decomposition into multiple Poisson integrals, namely
$F=\E[F]+\sum_{q\geq1}J_qF$,
and the corresponding isometry is

\noi
\begin{align}
 \E[I_p^\eta(f)I_q^\eta(g)]
 =\ind_{\{p=q\}}q!
 \inner{f}{g}_{L^2(\mu^q)},
 \quad
 \forall (f,g)\in L_s^2(\mu^p)\times L_s^2(\mu^q).
 \label{iso}
\end{align}

We refer readers to
the standard references \cite{PT11,Las16,LP18}
for more details.

We use the conventions $L^2(\mu^0)=\R$ and
$I_0^\eta(c)=c$, and interpret an empty product as one.
When no confusion can arise, we write
$\|f\|_{L^2(\mu^p)}=\|f\|_2$ and
$\|X\|_p=\|X\|_{L^p(\Omega)}$.

For a Poisson functional $F=F(\eta)$, the add-one cost derivative
is the difference operator
$D_zF=F(\eta+\delta_z)-F(\eta)$,
where $\delta_z$ denotes the Dirac mass at $z\in\cZ$.
If $F=I_q^\eta(f)$ with $f\in L^2_s(\mu^q)$, then, for
$1\leq r\leq q$,
\noi
\begin{align}
 D^r_{z_1,\ldots,z_r}F
 = \frac{q!}{(q-r)!} I_{q-r}^\eta
 \bigl(f(z_1,\ldots,z_r,\cdot)\bigr).
 \label{iter_D}
\end{align}
In particular, $D^qF=q!\,f$ is deterministic.  This hierarchy of
chaos orders will be used repeatedly in the martingale-core argument.

We deduce from \eqref{iter_D} and the isometry \eqref{iso} that

\noi
\begin{align}
 \int_{\cZ^r}\E[|D^r_{\boldsymbol z}F|^2]
 \,\mu^r(\dd\boldsymbol z)
 =\frac{q!}{(q-r)!}\E[F^2],
 \quad 1\leq r\leq q.
 \label{D_L2}
\end{align}
We next recall the product formula.  Let
$f\in L_s^2(\mu^p)$ and $g\in L_s^2(\mu^q)$, and fix
$0\leq\ell\leq r\leq p\wedge q$.
Writing
\begin{align*}
 \mathbf x\in \cZ^{p-r},\quad
 \mathbf y\in \cZ^{q-r},\quad
 \mathbf z\in \cZ^{r-\ell},
 \AND
 \mathbf u\in \cZ^\ell,
\end{align*}
the contraction $f\star_r^\ell g$ is,
up to the canonical ordering
of its variables,

\noi
\begin{align*}
 &(f\star_r^\ell g)(\mathbf x,\mathbf y,\mathbf z)=
 \int_{\cZ^\ell}
 f(\mathbf x,\mathbf z,\mathbf u)
 g(\mathbf y,\mathbf z,\mathbf u)
 \mu^\ell(\dd\mathbf u).
\end{align*}

Thus, $r$ variables are identified between the two kernels and
$\ell$ of those variables are integrated out.  In particular,
$f\star_r^r g$ is the ordinary contraction
as in \cite[Appendix B]{NP12}, which we also
denote by $f\otimes_r g$.  Whenever all terms are well defined in
$L^2$, the product formula reads

\noi
\begin{align}
 I_p^\eta(f)I_q^\eta(g)
 &=\sum_{r=0}^{p\wedge q}
 r!\binom pr\binom qr
 \sum_{\ell=0}^r\binom r\ell
 I_{p+q-r-\ell}^\eta
 \left(f\wt{\star_r^\ell} g\right).
 \label{Poi_prod_full}
\end{align}
See, for example, \cite[Proposition~5]{Las16}.
Here, $\wt h=\sym(h)$ denotes
the symmetrization of $h$.

We record the contraction estimates used below.  Cauchy--Schwarz
and the fact that symmetrization is an orthogonal projection give
\noi
\begin{align}
 \|h\otimes_s k\|_2
 \leq\|h\|_2\|k\|_2
 \AND
 \|\sym(h)\|_2\leq\|h\|_2,
 \label{contr_CS}
\end{align}
for $h\in L^2(\mu^a)$, $k\in L^2(\mu^b)$, and
$0\leq s\leq a\wedge b$; see also \cite[Appendix B]{NP12}.
More explicitly, let $f\in L_s^2(\mu^q)$,
$0\leq\ell<r\leq q$, and put $t=r-\ell$ and
$f_{\boldsymbol z}=f(\boldsymbol z,\cdot)$ for
$\boldsymbol z\in\cZ^t$.  Fubini's theorem and
\eqref{contr_CS} imply

\noi
\begin{align}
 \big\|f\star_r^\ell f \big\|_2^2
 &=\int_{\cZ^t}
 \|f_{\boldsymbol z}\otimes_\ell
 f_{\boldsymbol z}\|_2^2\,\mu^t(\dd\boldsymbol z)
 \leq\int_{\cZ^t}\|f_{\boldsymbol z}\|_2^4
 \,\mu^t(\dd\boldsymbol z).
 \label{star_section}
\end{align}

For $F=I_q^\eta(f)$ with $f$ symmetric,
we deduce from \eqref{iter_D} and \eqref{iso} that

\noi
\begin{align*}
 \|f_{\boldsymbol z}\|_2^2
 =\frac{(q-t)!}{(q!)^2}
 \E[(D^t_{\boldsymbol z}F)^2].
\end{align*}
Consequently, Jensen's inequality yields
\noi
\begin{align}
 \|f\star_r^\ell f\|_2
 \leq\frac{(q-r+\ell)!}{(q!)^2}
 \left\{
 \int_{\cZ^{r-\ell}}
 \E[|D^{r-\ell}_{\boldsymbol z}F|^4]
 \,\mu^{r-\ell}(\dd\boldsymbol z)
 \right\}^{1/2}.
 \label{star_D}
\end{align}
These inequalities hold with possibly infinite right-hand sides.
For $F\in L^4(\Omega)\cap \cC_q^\eta$, their finiteness follows from
Proposition~\ref{prop_core} below.

We repeatedly use the following elementary cases.  If
$f,g\in L^2(\mu)$ and $fg\in L^2(\mu)$, then
\noi
\begin{align}
 I_1^\eta(f)I_1^\eta(g)
 &=
 I_2^\eta\bigl(\sym(f\ot g)\bigr)
 +I_1^\eta(fg)
 +\inner{f}{g}_{L^2(\mu)}.
 \label{I1_prod}
\end{align}
In particular, for $f\in L^2(\mu)\cap L^4(\mu)$,
\noi
\begin{align*}
 I_1^\eta(f)^2
 &=
 I_2^\eta(f^{\ot2})
 +I_1^\eta(f^2)
 +\|f\|_{L^2(\mu)}^2.
\end{align*}
If $f_1,\ldots,f_q\in L^2(\mu)$ have pairwise disjoint supports,
then
\noi
\begin{align}
 \prod_{k=1}^q I_1^\eta(f_k)
 =
 I_q^\eta\left(
 \sym(f_1\ot\cdots\ot f_q)
 \right).
 \label{disjoint_prod}
\end{align}

More generally, suppose that $f_j\in L_s^2(\mu^q)$ is supported
by $C_j^q$, where $C_1,\ldots,C_m$ are pairwise disjoint.
Then $F_j=I_q^\eta(f_j)$ depends only on the restriction of
$\eta$ to $C_j$, so the $F_j$ are independent and
$\sum_j a_jF_j=I_q^\eta(\sum_j a_jf_j)$ for real $a_j$.
Moreover, $D_zF_j=0$ for $z\notin C_j$.  If each $F_j$
belongs to $L^4(\Omega)$, it follows that
\noi
\begin{align}
 \Jfour\left(\sum_{j=1}^m a_jF_j\right)
 =\sum_{j=1}^m a_j^4\Jfour(F_j).
 \label{J4_blocks}
\end{align}

The Ornstein--Uhlenbeck generator is defined spectrally by
\[
 LF=-\sum_{r\geq1}rJ_rF.
\]
Here,
$\operatorname{dom}L
 =\{H\in L^2(\sigma\{\eta\}):
 \sum_{k\geq1}k^2\|J_kH\|_2^2<\infty\}$.
When $F$, $G$, and $FG$ belong to the domain of $L$, the carr\'e-du-champ
is
\noi
\begin{align}\label{GF1}
 \Gam(F,G)
 =\frac12\{L(FG)-F\,LG-G\,LF\}.
\end{align}
For $F\in\cC_q^\eta\cap L^4(\Omega)$, the square $F^2$ has spectral support in
orders at most $2q$, and we may equivalently define
\noi
\begin{align}\label{GF2}
 \Gam(F,F)
 =\frac12\{L(F^2)+2qF^2\}\in L^2(\PP).
\end{align}

The spectral representation is
\[
 \Gam(F,F)=\sum_{k=0}^{2q}\left(q-\frac k2\right)J_k(F^2)
 \AND
 \E[\Gam(F,F)]=q\E[F^2].
\]
Since the coefficients belong to $[0,q]$, for
$F,H\in\cC_q^\eta\cap L^4(\Omega)$ we have
\noi
\begin{align}
 \|\Gam(F,F)-\Gam(H,H)\|_2
 &\leq q\|F^2-H^2\|_2
 \notag\\
 &\leq q(\|F\|_4+\|H\|_4)\|F-H\|_4.
 \label{Gamma_cont}
\end{align}
This gives the continuity of the spectral carr\'e-du-champ
under fixed-chaos $L^4$ approximation.

For a centered random variable $F$ with finite fourth moment, we use
\noi
\begin{align*}
 \cum_3(F)=\E[F^3]
 \AND
 \cum_4(F)=\E[F^4]-3\E[F^2]^2.
\end{align*}
Recall from \cite[Remark~3.1.6]{Zhe18} that
$\cum_4(F)>0$ for every nonconstant $F$ in a fixed Poisson chaos.
The finite-count approximation below
also yields the quantitative estimate
\noi
\begin{align}
 \Jfour(F)
 \leq(4q-3)\cum_4(F)
 \label{D4_prelim}
\end{align}
for $F\in\cC_q^\eta\cap L^4(\Omega)$.

\subsection{Gaussian chaos}
\label{SEC_gauss_prelim}

On a product extension of the probability space if necessary, let
$W=\{W(h):h\in L^2(\mu)\}$ be an isonormal Gaussian process
independent of $\eta$, meaning that it is a centered Gaussian family
with
covariance structure
\[
\E[W(h) W(\phi)] = \langle h, \phi\rangle_{L^2(\mu)}
\]
for any $h, \phi\in L^2(\mu)$.
For $A\in\sZ$ with $\mu(A)<\infty$, we abbreviate
$W(A)=W(\ind_A)$.  Thus $W(A)$ has law $\NN(0,\mu(A))$,
and the variables $W(A_1),\ldots,W(A_m)$ are independent
when the sets $A_1,\ldots,A_m$ are pairwise disjoint.
We also set $I_0^W(c)=c$.
For $f\in L_s^2(\mu^q)$, we write $I_q^W(f)$ for the $q$th
multiple Wiener integral.  The $q$th Gaussian Wiener chaos is
\noi
\begin{align*}
 \cC_q^W=\{I_q^W(f):f\in L_s^2(\mu^q)\}
 \AND
 \cC_0^W=\R.
\end{align*}
The Wiener--It\^o decomposition of $L^2(\s\{W\})$ is
given by

\noi
\begin{align*}
 L^2(\Omega, \s\{W\}, \PP) =\bigoplus_{q=0}^{\infty}\cC_q^W
\end{align*}
with isometry
\noi
\begin{align}
 \E[I_p^W(f)I_q^W(g)]
 &=\ind_{\{p=q\}}q!\inner{f}{g}_{L^2(\mu^q)}.
 \label{Poi_Gau_iso}
\end{align}
In particular, Poisson and Gaussian integrals of the same symmetric kernel
have the same variance:
\noi
\begin{align*}
 \E[I_q^\eta(f)^2]
 =\E[I_q^W(f)^2]
 =q!\|f\|_{L^2(\mu^q)}^2.
\end{align*}
In view of the Poisson and Gaussian isometries \eqref{iso} and
\eqref{Poi_Gau_iso}, we define the chaos-by-chaos isometry on finite
Gaussian chaos expansions by
\noi
\begin{align}
 \mathsf T\bigl(I_k^W(h)\bigr)=I_k^\eta(h),
 \label{chaos_transport}
\end{align}
and extend it linearly across orthogonal chaos orders.

The product formula in the Gaussian setting
contains only the fully
integrated contractions $f\star_r^r g = f\otimes_r g$
and reads as

\noi
\begin{align}
 I_p^W(f)I_q^W(g)
 =\sum_{r=0}^{p\wedge q}
 r!\binom pr\binom qr
 I_{p+q-2r}^W
 \left( f\wt{\otimes_r} g\right),
 \label{Gau_prod_full}
\end{align}
see, for example, \cite[Theorem~2.7.10]{NP12}.  Therefore, in general,
 $\mathsf T(H^2)\neq(\mathsf T H)^2$.

We will also use Gaussian hypercontractivity in its finite-chaos
form: for $H\in\bigoplus_{k=0}^r\cC_k^W$ and $p\geq2$,

\noi
\begin{align}
 \|H\|_p\leq(p-1)^{r/2}\|H\|_2.
 \label{Gau_hyper}
\end{align}
See, e.g., \cite[Proposition~2.6]{NPR10}.
Thus, $L^2$ convergence of
kernels in a fixed order implies $L^p$ convergence of the
corresponding Gaussian integrals for every finite $p$.
A sequence in a fixed finite sum of Gaussian chaoses with bounded
second moments has uniformly bounded moments of every fixed
order; in particular, each fixed absolute power is uniformly
integrable.  Finally,
$I_q^{-W}(f)=(-1)^qI_q^W(f)$ and $-W$ has the same law as $W$.
Every odd Gaussian chaos is therefore symmetric, and its third
moment is zero.

\subsection{The centered Gamma law}
\label{SEC_22}

Let $Z_\nu$ be defined by \eqref{def_Znu}.
Its cumulant generating
function is

\noi
\begin{align}
 \log\E[e^{tZ_\nu}]
 =-\nu t-\frac\nu2\log(1-2t),
 \quad t<\frac12.
 \label{Gamma_cgf}
\end{align}
Consequently,

\noi
\begin{align*}
 \cum_r(Z_\nu)
 =2^{r-1}(r-1)!\nu,
 \quad r\geq2.
\end{align*}
and in particular

\noi
\begin{align}
 \E[Z_\nu^2]=2\nu,
 \quad
 \E[Z_\nu^3]=8\nu,
 \AND
 \E[Z_\nu^4]=12\nu^2+48\nu.
 \label{Gamma_moments}
\end{align}
Thus, whenever $F$ is centered and $\E[F^2]=2\nu$, the mixed
third--fourth moment defect from \eqref{intro_Del} can be written as

\noi
\begin{align} 
 \Del_\nu(F)
 =\cum_4(F)-12\cum_3(F)+48\nu.
 \notag
\end{align}

For the Gamma approximation bound below, we use the whole-line
centered Gamma Stein equation and the corresponding Stein-factor
estimates from \cite[Theorem~2.3]{DP18a}.

The centered Gamma law is infinitely divisible with L\'evy measure

\noi
\begin{align} \label{Lam_nu}
 \Lambda_\nu(\dd x)
 =\frac\nu2 x^{-1}e^{-x/2}
 \ind_{(0,\infty)}(x)\dd x.
\end{align}
If $\eta_\nu$ is a Poisson random measure with intensity
$\Lambda_\nu$, then $Z_\nu$ has the following first-chaos realization:

\noi
\begin{align*}
 F_\nu^{(1)}
 =\int_0^\infty x\,
 \widehat\eta_\nu(\dd x)
 \stackrel{\mathrm d}=Z_\nu.
\end{align*}
Thus the target itself admits a purely jump realization in the first
Poisson chaos: $F_\nu^{(1)}=I_1^{\eta_\nu}(\phi)$ with $\phi(x)=x$.
For this realization,
\noi
\begin{align*}
 \Jfour(F_\nu^{(1)})
 =\int_0^\infty x^4\Lambda_\nu(\dd x)
 =48\nu.
\end{align*}
If $X$ has moments of all orders, write $m_r=\E[X^r]$ and let
$\kappa_r$ denote its $r$th cumulant.
The moment--cumulant formula is
\begin{align}\label{MCF1}
 m_r
 =
 \sum_{\pi\in\mathcal P(r)}
 \prod_{B\in\pi}\kappa_{|B|},
\end{align}
where $\mathcal P(r)$ denotes the set of partitions of
$\{1,\ldots,r\}$.  Conversely,
\begin{align}\label{MCF2}
 \kappa_r
 =
 \sum_{\pi\in\mathcal P(r)}
 (|\pi|-1)!(-1)^{|\pi|-1}
 \prod_{B\in\pi}m_{|B|};
\end{align}
see \cite[Corollary~3.2.2]{PT11}.
Cumulants are homogeneous and additive over independent sums.
More precisely, for independent $X_1,\ldots,X_m$ with moments
through order $r$ and real $a_1,\ldots,a_m$,
\noi
\begin{align}
 \cum_r\left(\sum_{j=1}^m a_jX_j\right)
 =\sum_{j=1}^m a_j^r\cum_r(X_j).
 \label{cum_add}
\end{align}
For a compensated first Poisson integral, the corresponding
identity is
\noi
\begin{align}
 \cum_r(I_1^\eta(h))
 =\int_{\cZ}h(z)^r\,\mu(\dd z),
 \quad r\geq2,
 \label{cum_I1}
\end{align}
provided $h\in L^2(\mu)\cap L^r(\mu)$; this follows directly
from the Laplace functional of the Poisson process; see
\cite[Theorem~3.9]{LP18}.
These formulas and \eqref{MCF1}--\eqref{MCF2} allow us to pass
between moment and cumulant convergence.

The laws used as moment-method targets below are moment
determinate.  Indeed, a moment generating function finite in
a neighborhood of zero determines the law uniquely.  This
applies to $Z_\nu$ by \eqref{Gamma_cgf}, to Gaussian laws, and
to centered compound Poisson variables with finitely many jump
sizes.  For the last assertion, if
$Y=\sum_{j=1}^m a_j(P_j-\lambda_j)$ with independent
$P_j\sim\Pois(\lambda_j)$, then
$\log\E[e^{tY}]
 =\sum_{j=1}^m\lambda_j
 (e^{a_jt}-1-a_jt)$
 for $t\in\R$.

\subsection{Distances}
\label{SEC_dist}
We use the following distances throughout the paper.

Let

\noi
\begin{align}\label{d2d}
 d_2(X,Y)
 =\sup_{h\in\cH_2}
 |\E[h(X)]-\E[h(Y)]|,
\end{align}

\noi
where $\cH_2$ is the class of continuously differentiable functions
$h:\R\to\R$ satisfying
\noi
\begin{align*}
 \|h'\|_\infty\leq1
 \AND
 |h'(x)-h'(y)|\leq|x-y|.
\end{align*}
We also define the Wasserstein distance
\begin{align}\label{def_dW}
 \dW(X,Y)
 =\sup_{\|h'\|_\infty\leq1}
 |\E[h(X)]-\E[h(Y)]|.
\end{align}
By definition, $d_2(X,Y)\leq\dW(X,Y)$, and convergence in either
$d_2$ or $\dW$ implies convergence in law.
We will also use the following relation from
\cite[Lemma~1.4]{DP18a}.
\begin{lemma}\label{lem_d2_dW}
Let $X$ and $Y$ be random variables with
$\E[|X|]<\infty$ and $\E[|Y|]<\infty$.  Then
\begin{align}\label{dW_d2}
\dW(X,Y) \leq \frac{4}{\sqrt{\pi}}\sqrt{d_2(X,Y)},
\end{align}
whenever $d_2(X,Y)\leq1$.
\end{lemma}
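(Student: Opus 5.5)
This is the smoothing inequality of \cite[Lemma~1.4]{DP18a}. The plan is to approximate a $1$-Lipschitz test function by a smoothed one, pay a bias proportional to the smoothing width, and optimize that width against $d_2(X,Y)$.

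\emph{Reductions.} Fix $h$ with $\|h'\|_\infty\le1$. By the triangle inequality, $|h(x)|\le|h(0)|+|x|$, so $\E[h(X)]$ and $\E[h(Y)]$ are finite because $X,Y\in L^1$. Subtracting the constant $h(0)$ does not change the difference, so we may assume $h(0)=0$. Let $N\sim\NN(0,1)$ be independent of $X$ and $Y$, and for $t>0$ set
\[
h_t(x)=\E[h(x+tN)].
\]

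\emph{Properties of $h_t$.}
\begin{itemize}
\item \emph{Bias.} Since $h$ is $1$-Lipschitz,
\[
|h_t(x)-h(x)|\le t\,\E|N|=t\sqrt{2/\pi}
\]
for every $x$.
\item \emph{First derivative.} $h_t$ is $C^\infty$ and $\|h_t'\|_\infty\le1$, because $h_t'(x)=\E[h'(x+tN)]$, which is valid since $h$ is Lipschitz and hence a.e.\ differentiable.
\item \emph{Second derivative.} Gaussian integration by parts gives $h_t'(x)=\E[h(x+tN)N]/t$. Differentiating once more,
\[
h_t''(x)=\frac1t\,\E[h'(x+tN)N],
\]
so $\|h_t''\|_\infty\le\E|N|/t=\sqrt{2/\pi}/t$.
\end{itemize}

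\emph{Rescaling into $\cH_2$.} Put $c_t=\max\{1,\sqrt{2/\pi}/t\}$. Then $h_t/c_t\in\cH_2$, so
\[
|\E h_t(X)-\E h_t(Y)|\le c_t\,d_2(X,Y).
\]
Combining this with the bias estimate applied at both $X$ and $Y$,
\[
|\E h(X)-\E h(Y)|\le 2t\sqrt{2/\pi}+\max\Bigl\{1,\tfrac{\sqrt{2/\pi}}{t}\Bigr\}\,d_2(X,Y).
\]

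\emph{Choice of $t$.} Take $t=\sqrt{d_2(X,Y)/2}$ if $d_2(X,Y)>0$; if $d_2=0$, simply let $t\to0$. Write $d=d_2(X,Y)$. The hypothesis $d\le1$ gives $t\le1/\sqrt2<\sqrt{2/\pi}$, so the maximum equals $\sqrt{2/\pi}/t$. The bound then becomes
\[
\sqrt{2/\pi}\Bigl(\sqrt{2d}+\tfrac{d}{t}\Bigr)=\sqrt{2/\pi}\cdot2\sqrt{2d}=\frac{4}{\sqrt\pi}\sqrt d .
\]
Taking the supremum over $h$ gives \eqref{dW_d2}.

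\emph{Main obstacle.} The only delicate point is the second-derivative bound for $h_t$ when $h$ is merely Lipschitz. I would handle it by first proving it for $h\in C^1$ with bounded derivative, then approximating a general $1$-Lipschitz $h$ by such functions, for instance mollifications that converge locally uniformly with $|h(x)|\le|h(0)|+|x|$. Dominated convergence, using $X,Y\in L^1$, transfers the final inequality to the limit.
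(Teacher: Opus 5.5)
Your proof is correct: the Gaussian smoothing $h_t(x)=\E[h(x+tN)]$ has bias $t\sqrt{2/\pi}$ and second-derivative bound $\sqrt{2/\pi}/t$, and the choice $t=\sqrt{d_2(X,Y)/2}$ (admissible because $d_2\leq1$ forces $t<\sqrt{2/\pi}$) gives exactly the constant $4/\sqrt{\pi}$. This is the standard smoothing argument behind \cite[Lemma~1.4]{DP18a}, which the paper cites without proof (the same Gaussian convolution reappears in the proof of \eqref{thm5_2}); the approximation step you flag as the main obstacle is unnecessary, since for Lipschitz $h$ the formula $h_t''(x)=t^{-1}\E[h'(x+tN)N]$ already holds directly, with $h'$ the bounded a.e.-defined derivative and $x+tN$ having a density.
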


We also use the bounded-Lipschitz distance, defined by
\begin{align*}
 d_{\mathrm{BL}}(X,Y)
 =\sup_{\|h\|_\infty\leq1,\,\|h'\|_\infty\leq1}
 |\E[h(X)]-\E[h(Y)]|.
\end{align*}

The bounded-Lipschitz distance metrizes weak convergence on
$\R$.  For integrable $X,Y$ on a common probability space,
\noi
\begin{align*}
 d_{\mathrm{BL}}(X,Y)
 \leq\dW(X,Y)\leq\E[|X-Y|].
\end{align*}
The same upper bound applies to $d_2$.  The definition of
$\dW$ also gives, for real $a,b$,
\noi
\begin{align}
 \begin{aligned}
 \dW(aX+b,aY+b)&=|a|\dW(X,Y), \quad
 \dW(X+R,Y+R)\leq\dW(X,Y).
 \end{aligned}
 \label{dist_conv}
\end{align}
Here, in the second inequality, $R$ is integrable and independent
of $(X,Y)$.  These statements depend only on the marginal
laws and thus apply to independent convolution on each side.

We record the convergence facts used below.  If
$X_n\to X$ in law and $\{|X_n|:n\geq1\}$ is uniformly
integrable, then $X$ is integrable and
$\dW(X_n,X)\to0$.

More generally, weak convergence together with uniform
integrability of $\{|X_n|^r:n\geq1\}$, for an integer
$r\geq1$, implies convergence of all moments through order
$r$.  For $0<r<s$,
\noi
\begin{align}
 \sup_n\E[|X_n|^r\ind_{\{|X_n|>M\}}]
 \leq M^{r-s}\sup_n\E[|X_n|^s].
 \label{UI_tail}
\end{align}
Thus, a uniform fourth-moment bound implies uniform
integrability of lower powers, but does not in general imply
uniform integrability of the fourth powers themselves.

Finally, if all moments of $X_n$ converge to those of a
moment-determinate random variable $X$, then $X_n\to X$ in
law.  Indeed, the second moments give tightness, and bounded
higher even moments, together with \eqref{UI_tail}, allow
every moment to pass to a subsequential weak limit.  Moment
determinacy identifies each such limit with $X$.  We use this
argument for the targets described in Section~\ref{SEC_22}.

\subsection{Martingale approximation}
\label{SEC_23}

Let us first recall the definition of finite-partition core or Charlier chaos  
 $\cC_q^{\bigstar}$ from \cite[Definition 1.6]{Zhe26a}.
 
 \begin{definition}\label{def_CCQ}
Let $\cC_q^{\bigstar}$ consist of all $I_q^\eta(g)$ such that $g$
is a bounded symmetric step function supported by $C^q$ for some
$C\in\sZ$ with $\mu(C)<\infty$. 
\end{definition}
An element of  $\cC_q^{\bigstar}$
satisfies the regularity assumptions in \cite{DP18b}.
The finite-count projection underlying the martingale core can be
written explicitly.  Let $A_1,\ldots,A_m$ be disjoint sets with
$0<\lambda_i:=\mu(A_i)<\infty$, and let $\Pi_A$ be the orthogonal
projection of $L^2(\mu)$ onto the span of
$\ind_{A_1},\ldots,\ind_{A_m}$:
\[
 \Pi_Ah=\sum_{i=1}^m\frac1{\lambda_i}
 \left(\int_{A_i}h\,\dd\mu\right)\ind_{A_i}.
\]
Charlier orthogonality and the multiple-integral isometry yield
\noi
\begin{align}
 \E[I_q^\eta(f) | \s\{\eta(A_1),\ldots,\eta(A_m)\}]
 =I_q^\eta(\Pi_A^{\otimes q}f).
 \label{count_proj}
\end{align}
Thus, conditioning on finitely many counts preserves the chaos order
and produces a bounded step kernel of finite-measure support; see
\cite[Section~2.2]{Zhe26a}.

We use the following results from
\cite[Theorems~1.7--1.8, Corollary~2.1, and
Proposition~1.10]{Zhe26a}.

\begin{proposition}
\label{prop_core}
Fix $p\in[2,\infty)$ and let
$F=I_q^\eta(f)\in\cC_q^\eta\cap L^p(\Omega)$.  There exist increasing
$\s$-fields
$\cG_n=\s\{\eta(A_{n,1}),\ldots,\eta(A_{n,m_n})\}$, generated by
pairwise disjoint sets of finite positive measure, such that
\[
 F_n=\E[F | \cG_n]=I_q^\eta(f_n)\in\cC_q^{\bigstar}
 \quad\text{and}\quad F_n\longrightarrow F\text{ in }L^p(\Omega).
\]
If $p=4$, the approximants can be chosen so that
\noi
\begin{align}
 \E[|F_n-F|^4]
 +\sum_{r=1}^q\int_{\cZ^r}
 \E[|D^r_{\boldsymbol z}(F_n-F)|^4]
 \,\mu^r(\dd\boldsymbol z)
 \longrightarrow0.
 \label{core_graph}
\end{align}
Consequently,
\noi
\begin{align}
 \Gam(F_n,F_n)&\longrightarrow\Gam(F,F)
 \quad\text{in }L^2(\Omega),
 \label{core_Gamma}\\
 \Jfour(F_n)&\longrightarrow\Jfour(F).
 \label{core_J4}
\end{align}
Moreover, for $1\leq r\leq q$,
\[
 \int_{\cZ^r}\E[|D^r_{\boldsymbol z}F|^4]
 \,\mu^r(\dd\boldsymbol z)
 \leq
 \left\{\prod_{j=0}^{r-1}(4(q-j)-3)\right\}\cum_4(F).
\]
In particular, $f\in L^4(\mu^q)$ and
$\Jfour(F)\leq(4q-3)\cum_4(F)$.
\end{proposition}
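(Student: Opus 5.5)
The plan is to build the approximants from nested finite partitions, prove the derivative bound on the regular class $\cC_q^\bigstar$ first, and then pass everything to the limit by martingale convergence and Fatou's lemma.

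\textbf{Step 1: the filtration.} Since $f\in L^2_s(\mu^q)$, we may modify $f$ on a null set so that it is measurable with respect to a countably generated sub-$\s$-field $\sZ_0\subset\sZ$. Choose sets $C_n\uparrow$ of finite measure with $f$ vanishing off $\bigcup_n C_n^q$, up to null sets. Then choose nested finite partitions $\{A_{n,i}\}_i$ of $C_n$ into sets of positive finite measure. These partitions should refine one another and generate $\sZ_0$ on $\bigcup_n C_n$.

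Set $\cG_n=\s\{\eta(A_{n,i}):i\le m_n\}$. By \eqref{count_proj}, $F_n=\E[F|\cG_n]=I_q^\eta(\Pi_{A_n}^{\otimes q}f)$. The kernel $\Pi_{A_n}^{\otimes q}f$ is a symmetric step function with finitely many values and support in $C_n^q$, so $F_n\in\cC_q^\bigstar$.

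Because the partitions generate $\sZ_0$, the $\s$-field $\cG_\infty=\bigvee_n\cG_n$ makes every $I_q^\eta(g)$ with $\sZ_0$-measurable $g$ measurable. In particular $F$ is $\cG_\infty$-measurable. Doob's $L^p$ martingale convergence theorem then gives $F_n\to F$ in $L^p$ for every $p\in[2,\infty)$ with $F\in L^p$.

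\textbf{Step 2: the fourth-moment bound on derivatives.} This is the step I expect to be the main obstacle; I would prove it before the graph-norm convergence.

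For regular $H\in\cC_q^\bigstar$, first prove
\[
\Jfour(H)\le(4q-3)\cum_4(H).
\]
For this I would follow the Döbler--Peccati Mecke/spectral computation. Expand $\E[H^4]$ through $H=-\frac1q L H$ and the integration by parts $\E[H^3\cdot(-L^{-1}H)]=\E\int D_z(H^3)\,D_z(-L^{-1}H)\,\mu(dz)$. Then write $D_z(H^3)$ via $(H+D_zH)^3-H^3$. Isolating $\int\E[(D_zH)^4]\mu(dz)$ and controlling the cross terms by the spectral positivity of $\Gam$ should produce exactly the constant $4q-3$. This is precisely where regularity (Assumption $\mathbf A$) is needed, and it is available on $\cC_q^\bigstar$.

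To extend to general $F\in\cC_q^\eta\cap L^4$, take $F_n$ from Step 1. Then $\cum_4(F_n)\to\cum_4(F)$, and $D_zF_n\to D_zF$ in $L^2(\mu\otimes\PP)$ by \eqref{D_L2} and kernel convergence. Passing to an a.e.\ convergent subsequence and applying Fatou's lemma gives $\Jfour(F)\le(4q-3)\cum_4(F)$.

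For higher $r$, iterate. For $\mu$-a.e.\ $z$, $D_zF$ lies in $\cC_{q-1}^\eta\cap L^4$, so
\[
\int\E[|D_wD_zF|^4]\,\mu(dw)\le(4(q-1)-3)\cum_4(D_zF)\le(4(q-1)-3)\,\E[(D_zF)^4].
\]
Integrating in $z$ and inducting on $r$ yields the product constant $\prod_{j=0}^{r-1}(4(q-j)-3)$. Taking $r=q$ and using \eqref{iter_D} (so that $D^qF=q!f$) gives $f\in L^4(\mu^q)$.

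\textbf{Step 3: graph-norm convergence \eqref{core_graph}.} For $z$ in a cell $A_{n,i}$, \eqref{iter_D} and the product structure of $\Pi^{\otimes q}$ give the averaged martingale representation
\[
D_zF_n=\frac1{\mu(A_{n,i})}\int_{A_{n,i}}\E[D_wF|\cG_n]\,\mu(dw),
\]
with analogous formulas for $D^r$. This exhibits $D^rF_n$ as a conditional expectation of $D^rF$ on the product space $(\cZ^r\times\Omega,\mu^r\otimes\PP)$. The conditioning is with respect to the increasing $\s$-fields generated by the partition cells and by $\cG_n$, which exhaust, up to null sets, a $\s$-field making $D^rF$ measurable.

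Step 2 shows $D^rF\in L^4(\mu^r\otimes\PP)$. The measure $\mu^r$ is only $\s$-finite, but one can localize to the sets $C_n^r$ and use that the tails of $D^rF$ outside $C_n^r$ are small in $L^4$. Martingale convergence in $L^4$ then gives \eqref{core_graph}.

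\textbf{Step 4: the consequences.} The convergence \eqref{core_Gamma} follows from \eqref{Gamma_cont} together with $F_n\to F$ in $L^4$. The convergence \eqref{core_J4} follows from the $r=1$ term of \eqref{core_graph} and Minkowski's inequality in $L^4(\mu\otimes\PP)$.
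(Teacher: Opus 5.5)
Your proposal is correct and follows the martingale-core route that the paper imports from \cite{Zhe26a}. The ingredients match: finite-count conditional expectations via \eqref{count_proj} with Doob convergence; the D\"obler--Peccati bound on $\cC_q^{\bigstar}$, passed to the limit and iterated over derivative orders as in Lemma~\ref{lem_J4_prop}; the same conditional-expectation structure for the derivatives to obtain \eqref{core_graph}; and \eqref{Gamma_cont} plus the reverse triangle inequality for \eqref{core_Gamma}--\eqref{core_J4}.

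One small slip in Step~2: the identity should read $\E[H^4]=\E[H^3\cdot(-L)(-L^{-1}H)]=\frac1q\E\int_\cZ D_z(H^3)\,D_zH\,\mu(\dd z)$. The constant $4q-3$ then follows from $D_z(H^3)\,D_zH=\frac34\bigl(D_z(H^2)\bigr)^2+\frac14(D_zH)^4$, the spectral expansion of $H^2$, and $\E[J_{2q}(H^2)^2]\geq2\E[H^2]^2$.
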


The convergence in \eqref{core_Gamma} follows from
\eqref{Gamma_cont}.  The second convergence follows from
$\Jfour(H)^{1/4}=\|DH\|_{L^4(\PP\ot\mu)}$ and the reverse triangle
inequality.  Both conclusions remain valid after replacing $F_n$ by
$a_nF_n$ whenever $a_n\to1$.

\section{The diffuse regime}
\label{SEC3}

We first pass the regular centered Gamma estimates to arbitrary
fourth-integrable fixed-chaos elements by martingale closure, and then
deduce the diffuse four-moment criterion.

\subsection{Closure of the centered Gamma estimates}
\label{SEC_32}

Define
\noi
\begin{align}
 C_1(\nu)
 &=\frac1{\sqrt3}\max\left\{1,\frac2\nu\right\},
 \notag\\
 C_2(\nu)
 &=\frac1{\sqrt6}\max\left\{1,\frac2\nu\right\}
 +\max\left\{
 \sqrt{2\nu},
 \sqrt{\frac2\nu}+\sqrt{\frac\nu2}
 \right\}.
 \label{C_def}
\end{align}

\begin{proof}[Proof of Theorem~\ref{thm1}]
Let $F_n$ be the martingale approximants from
Proposition~\ref{prop_core}, put $\sigma_n^2=\E[F_n^2]$, and, for all
large $n$, set
\[
 a_n=\frac{\sqrt{2\nu}}{\sigma_n}
 \AND
 G_n=a_nF_n.
\]
Since $F_n\to F$ in $L^4(\Omega)$, we have $a_n\to1$,
$\E[G_n^2]=2\nu$, and $G_n\to F$ in $L^4(\Omega)$.  The graph,
carr\'e-du-champ, and fourth add-one convergences in
\eqref{core_graph}--\eqref{core_J4} remain valid under this rescaling.

We may apply \cite[Lemma~5.3]{DP18b} to $G_n$ and obtain
\[
 \frac1{6q}\Del_\nu(G_n)+\frac1{12q^2}\Jfour(G_n)
 \leq\Var\left(2G_n-\frac1q\Gam(G_n,G_n)\right)
 \leq\frac13\Del_\nu(G_n)+\frac1{6q}\Jfour(G_n).
\]
The $L^4$ convergence gives
$\Del_\nu(G_n)\to\Del_\nu(F)$, while \eqref{core_Gamma} and
\eqref{core_J4} give
\[
 2G_n-\frac1q\Gam(G_n,G_n)
 \longrightarrow2F-\frac1q\Gam(F,F)
 \quad\text{in }L^2(\Omega)
 \AND
 \Jfour(G_n)\longrightarrow\Jfour(F).
\]
Passing to the limit proves \eqref{intro_Gam_cl}.

Similarly, \cite[Theorem~1.7]{DP18b}, together with the whole-line
Gamma Stein estimates in \cite[Theorem~2.3]{DP18a}, yields
\eqref{Gam_bdd} with $G_n$ in place of $F$.  Every $h\in\cH_2$ is
one-Lipschitz, and hence
$
 |d_2(G_n,Z_\nu)-d_2(F,Z_\nu)|
 \leq\E[|G_n-F|]\longrightarrow0.
$
The two terms on the right-hand side of \eqref{Gam_bdd} converge by
$L^4$ convergence and \eqref{core_J4}.  Letting $n\to\infty$ proves
the asserted bound.
\end{proof}

\subsection{The diffuse four-moment criterion}
\label{SEC_33}

\begin{proof}[Proof of Theorem~\ref{thm2}]
If $\Del_\nu(F_n)\to0$, then \eqref{Gam_bdd} and \eqref{diff_ass}
give $d_2(F_n,Z_\nu)\to0$, and hence
$F_n\xrightarrow{\mathrm d}Z_\nu$.  This proves the sufficient part.

Assume now that $\{F_n^4:n\geq1\}$ is uniformly integrable.  We prove
the equivalence of \textup{(i)}--\textup{(iv)}.  The implication
\textup{(ii)} $\Rightarrow$ \textup{(i)} is immediate.  Conversely,
\textup{(i)} and the uniform integrability of
$\{|F_n|:n\geq1\}$ imply $\dW(F_n,Z_\nu)\to0$ by
Section~\ref{SEC_dist}; since
$d_2\leq\dW$, this proves \textup{(ii)}.

The uniform integrability of the fourth powers also implies uniform
integrability of all lower powers.  Therefore, \textup{(i)} and
\eqref{Gamma_moments} yield
$\E[F_n^3]\to8\nu$ and
$\E[F_n^4]\to12\nu^2+48\nu$, which is \textup{(iii)}.  We deduce
\textup{(iv)} from \eqref{intro_Del}, and \textup{(iv)} implies
\textup{(ii)} by \eqref{Gam_bdd} and \eqref{diff_ass}.  This completes
the proof.
\end{proof}

\section{The rare-jump regime}
\label{SEC4}

The results in this section are constructive.  We work on an atomless
$\s$-finite control space of infinite mass, so that disjoint sets with
arbitrary prescribed finite measures are available.  For instance,
one may take $(\R_+,\mathcal B(\R_+),\mathrm{Leb})$.

\subsection{An exact second-chaos counterexample}
\label{SEC_34}

Before proving Theorem~\ref{thm3}, we give a one-block
construction.  Let $A$ and $B$ be disjoint
sets with
\noi
\begin{align*}
 \mu(A)=\lambda
 \AND
 \mu(B)=\rho,
\end{align*}
and put
\noi
\begin{align}
 U=\eta(A)-\lambda,
 \quad
 V=\eta(B)-\rho,
 \AND
 G=4UV.
 \label{G_block}
\end{align}
Since $A\cap B=\varnothing$, we deduce from \eqref{I1_prod} that

\noi
\begin{align}\label{block_G}
 G=I_2^\eta\left(
 4\,\sym(\ind_A\ot\ind_B)
 \right)\in\cC_2^\eta.
\end{align}
The kernel is bounded and nonnegative.  The next lemma identifies
parameters for which this block matches the first four centered Gamma
moments.

\begin{lemma}
\label{lem_block}
Let $\theta>0$, and suppose that $\lambda,\rho>0$ satisfy
\noi
\begin{align}
 \lambda\rho=\frac\theta8
 \AND
 \lambda+\rho=\frac16-\frac\theta4.
 \label{lr_eq}
\end{align}
Then the variable $G$ in \eqref{G_block} satisfies
\noi
\begin{align}
 \cum_2(G)=2\theta,
 \quad
 \cum_3(G)=8\theta,
 \AND
 \cum_4(G)=48\theta,
 \label{G_cum}
\end{align}
and
\noi
\begin{align}\label{J4G}
 \Jfour(G)=80\theta-24\theta^2.
\end{align}
Positive solutions of \eqref{lr_eq} exist whenever
\noi
\begin{align}
 0<\theta\leq
 \theta_\ast=\frac{14-8\sqrt3}{3} \approx 0.05.
 \label{theta_star}
\end{align}
\end{lemma}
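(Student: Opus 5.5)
The plan is a direct moment computation. Since $A\cap B=\varnothing$, the variables $U=\eta(A)-\lambda$ and $V=\eta(B)-\rho$ are independent centered Poisson variables, and $G=4UV$. I will therefore compute the moments of $G$ from those of $U$ and $V$ and convert them to cumulants. The centered Poisson moments I need are
\[
\E[U^2]=\lambda,\qquad \E[U^3]=\lambda,\qquad \E[U^4]=\lambda+3\lambda^2,
\]
and likewise for $V$ with $\rho$. These follow from \eqref{cum_I1} and \eqref{MCF1}, since every cumulant of order at least two of $U$ equals $\lambda$.

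\textbf{Step 1: the cumulants.} By independence, $\E[G]=0$ and
\[
\E[G^2]=16\lambda\rho=2\theta,\qquad \E[G^3]=64\,\E[U^3]\,\E[V^3]=64\lambda\rho=8\theta.
\]
Since $G$ is centered, these are $\cum_2(G)$ and $\cum_3(G)$. For the fourth moment,
\[
\E[G^4]=256(\lambda+3\lambda^2)(\rho+3\rho^2),
\]
so that
\[
\cum_4(G)=\E[G^4]-3\E[G^2]^2=256\lambda\rho+768\lambda\rho(\lambda+\rho)+1536(\lambda\rho)^2 .
\]
Inserting $\lambda\rho=\theta/8$ gives $32\theta+96\theta(\lambda+\rho)+24\theta^2$. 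The second relation in \eqref{lr_eq}, $\lambda+\rho=\tfrac16-\tfrac\theta4$, turns this into exactly $48\theta$. This proves \eqref{G_cum}.

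\textbf{Step 2: the fourth add-one energy.} Adding a point $z\in A$ raises $U$ by one and leaves $V$ unchanged, so $D_zG=4V$. Symmetrically, $D_zG=4U$ for $z\in B$, and $D_zG=0$ off $A\cup B$. Hence
\[
\Jfour(G)=256\bigl(\lambda\,\E[V^4]+\rho\,\E[U^4]\bigr)=256\bigl(2\lambda\rho+3\lambda\rho(\lambda+\rho)\bigr).
\]
Substituting \eqref{lr_eq} gives $64\theta+16\theta-24\theta^2=80\theta-24\theta^2$, which is \eqref{J4G}.

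\textbf{Step 3: existence of positive solutions.} By Vieta, $\lambda$ and $\rho$ are the roots of $x^2-sx+p=0$ with $s=\tfrac16-\tfrac\theta4$ and $p=\tfrac\theta8$. These roots are real and positive precisely when $s>0$ and $s^2\geq4p$. After multiplying by $144$, the discriminant condition becomes
\[
9\theta^2-84\theta+4\geq0,
\]
whose roots are $\theta=(14\pm8\sqrt3)/3$. So for $0<\theta\leq\theta_\ast$ the discriminant is nonnegative. Moreover $\theta_\ast<2/3$, so $s>0$ on this range, and $p>0$ holds trivially; hence both roots are positive.

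The only place needing care is the fourth-cumulant bookkeeping in Step~1, specifically checking that the $(\lambda\rho)^2$ terms combine to $1536$. The remaining steps are routine.
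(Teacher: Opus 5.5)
Your proposal is correct and follows essentially the same route as the paper: you use independent centered Poisson moments to obtain $\E[G^2]$, $\E[G^3]$ and $\E[G^4]$, compute $\Jfour(G)$ from $D_zG=4V\ind_A(z)+4U\ind_B(z)$, and handle existence via the discriminant condition $9\theta^2-84\theta+4\geq0$. The only cosmetic difference is that you reduce directly to $\cum_4(G)$, while the paper first shows $\E[G^4]=12\theta^2+48\theta$. You also explicitly check $\theta_\ast<2/3$ to get $\lambda+\rho>0$, a point the paper only asserts.
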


\begin{proof}
Recall that the first four centered moments of a Poisson variable
with mean $\lambda$ are $0$, $\lambda$, $\lambda$, and
$3\lambda^2+\lambda$.  By independence,

\noi
\begin{align*}
 \E[G^2]&=16\lambda\rho=2\theta,
 \quad \E[G^3]=64\lambda\rho=8\theta,\\
 \E[G^4]
 &=256(\lambda+3\lambda^2)
 (\rho+3\rho^2)
 =256\lambda\rho
 \{1+3(\lambda+\rho)+9\lambda\rho\}
 =12\theta^2+48\theta,
\end{align*}
and \eqref{G_cum} follows.

To prove \eqref{J4G}, observe that
$D_zG=4U\ind_B(z)+4V\ind_A(z)$.  Since $A$ and $B$ are disjoint,
we deduce from \eqref{J4F} that

\noi
\begin{align*}
 \Jfour(G)
 =\int_{\cZ}\E[|D_zG|^4]\,\mu(\dd z)
 &=256\{\lambda(\rho+3\rho^2)
 +\rho(\lambda+3\lambda^2)\}\\
 &=256\lambda\rho\{2+3(\lambda+\rho)\}
 =80\theta-24\theta^2.
\end{align*}
Finally, $\lambda$ and $\rho$ are the roots of
$ x^2- (\frac16-\frac\theta4 )x
 +\frac\theta8=0.$
The discriminant is nonnegative precisely when
$9\theta^2-84\theta+4\geq0$.
The smaller positive root is $\theta_\ast$, and the sum and product in
\eqref{lr_eq} are positive throughout the range in
\eqref{theta_star}.
\end{proof}

We are now ready to prove Theorem~\ref{thm3}.

\begin{proof}[Proof of Theorem~\ref{thm3}]
Let $\theta_\ast$ be as in \eqref{theta_star}, and choose an
integer $m$ so large that $\theta=\nu/m\leq\theta_\ast<1$.
Take $m$ independent copies $G_{\theta,1},\ldots,G_{\theta,m}$ of
the variable $G$ in \eqref{block_G}, with the parameters from
Lemma~\ref{lem_block}, realized over pairwise disjoint regions of
$\cZ$, and set
\noi
\begin{align*}
 F_\nu=\sum_{j=1}^mG_{\theta,j} \in \cC_2^\eta.
\end{align*}

By \eqref{G_cum} and the cumulant additivity formula
\eqref{cum_add},
we have 
$ \cum_2(F_\nu)=2\nu$,
$ \cum_3(F_\nu)=8\nu$,
and
$ \cum_4(F_\nu)=48\nu.$
Then, we can deduce  \eqref{exact_four}  from  \eqref{MCF1} and
\eqref{Gamma_moments}.

The variable $F_\nu$ has a discrete law, whereas $Z_\nu$ has a
continuous law, so $\cL(F_\nu)\neq\cL(Z_\nu)$.  Since the blocks are
supported on pairwise disjoint regions, we deduce from
\eqref{J4_blocks} and \eqref{J4G} that
$ \Jfour(F_\nu)
 =m(80\theta-24\theta^2)
 =80\nu-\frac{24\nu^2}{m}>0.
$
This proves \eqref{thm4_2} and completes the proof.
\end{proof}

\subsection{Rainbow lifts of compound-Poisson laws}
\label{SEC_35}

We first record the elementary centered-Poisson moment estimates used
throughout this construction.  Let $P_a\sim\Pois(a)$ and put
$U_a=P_a-a$.  For every $r\geq2$, the $r$th cumulant of $U_a$ equals
$a$.  Hence, for each fixed $r\geq2$,

\noi
\begin{align}
 M_r(a)=\E[U_a^r]
 =a+O_r(a^2),
 \quad a\downarrow0.
 \label{Poi_mom}
\end{align}
The first values needed below are

\noi
\begin{align*}
 M_2(a)=a,
 \quad
 M_3(a)=a,
 \AND
 M_4(a)=a+3a^2.
\end{align*}

Fix $q\geq1$, $c>0$, and $t>0$.  For $m\geq1$, put
$ a_m=\left( t/m\right)^{1/q}$.
Choose pairwise disjoint sets $A_{j,k}^{(m)}\in\sZ$,
$1\leq j\leq m$ and $1\leq k\leq q$, such that
$ \mu(A_{j,k}^{(m)})=a_m$,
and set

\noi
\begin{align*}
 N_{j,k}^{(m)}
 &=\eta(A_{j,k}^{(m)})
 \sim\Pois(a_m),
 \quad
 1\leq j\leq m\AND 1\leq k\leq q,\\
 U_{j,k}^{(m)}
 &=N_{j,k}^{(m)}-a_m
 =I_1^\eta\bigl(\ind_{A_{j,k}^{(m)}}\bigr).
\end{align*}
By construction,
the random variables $N_{j,k}^{(m)}$ are independent.

Now define the \textit{$q$-color rainbow sum}
\noi
\begin{align}
 H_m^{(q)}(c,t)
 =c\sum_{j=1}^m
 \prod_{k=1}^qU_{j,k}^{(m)}.
 \label{rainbow_def}
\end{align}
The terminology refers to the fact that every monomial uses one
coordinate from each of $q$ disjoint `colors'.  By \eqref{disjoint_prod},
\noi
\begin{align*}
 \prod_{k=1}^qU_{j,k}^{(m)}
 =I_q^\eta\left(
 \sym\bigl(
 \ind_{A_{j,1}^{(m)}}\ot\cdots\ot
 \ind_{A_{j,q}^{(m)}}
 \bigr)
 \right).
\end{align*}
Thus $H_m^{(q)}(c,t)\in\cC_q^\eta$, with a bounded nonnegative step kernel
of finite-measure support.

\begin{proposition}
\label{prop_rainbow}
For every $r\geq2$,

\noi
\begin{align}
 \cum_r\bigl(H_m^{(q)}(c,t)\bigr)
\xrightarrow{m\to\infty} tc^r.
 \label{rainbow_cum}
\end{align}

Consequently, with $P_t\sim\Pois(t)$,
\noi
\begin{align}
 H_m^{(q)}(c,t)&\xrightarrow{\mathrm d}c(P_t-t),
 \notag\\
 \E\left[H_m^{(q)}(c,t)^r\right]
 &\longrightarrow\E\left[c(P_t-t)^r\right],
 \quad r\geq1.
 \label{rainbow_mom}
\end{align}
Moreover,
\noi
\begin{align}
 \Jfour\bigl(H_m^{(q)}(c,t)\bigr)
 =qtc^4(1+3a_m)^{q-1}
 \xrightarrow{m\to\infty} qtc^4.
 \label{rainbow_J4}
\end{align}

\end{proposition}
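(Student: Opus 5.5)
The plan is to exploit the independent block structure of the rainbow sum. Write $X_j=\prod_{k=1}^q U_{j,k}^{(m)}$. The blocks $X_1,\ldots,X_m$ are i.i.d., because they depend on disjoint families of independent counts. By \eqref{cum_add},
\[
\cum_r\bigl(H_m^{(q)}(c,t)\bigr)=c^r\,m\,\cum_r(X_1).
\]
Independence of the factors $U_{1,k}^{(m)}$ gives $\E[X_1^s]=M_s(a_m)^q$ for every $s\geq1$. In particular $\E[X_1]=0$, so in the moment--cumulant inversion \eqref{MCF2} only partitions of $\{1,\ldots,r\}$ without singleton blocks contribute. Consequently
\[
\cum_r(X_1)=\E[X_1^r]+\sum_{\pi}(\text{coef})\prod_{B\in\pi}M_{|B|}(a_m)^q ,
\]
where the sum runs over partitions $\pi$ with at least two blocks, each of size at least $2$.

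By \eqref{Poi_mom}, $M_s(a)^q=a^q(1+O_s(a))$ as $a\downarrow0$. Hence $\E[X_1^r]=a_m^q(1+O(a_m))=(t/m)(1+O(a_m))$. Every term indexed by a partition with $|\pi|\geq2$ is $O(a_m^{2q})=O(m^{-2})$. Multiplying by $m$ gives $m\,\cum_r(X_1)\to t$, which proves \eqref{rainbow_cum}. The only point to check is that the number of partitions is bounded in terms of $r$ alone, so the error terms are uniform in $m$.

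For \eqref{rainbow_mom}, by \eqref{cum_I1} with $h=c\ind$ on a set of mass $t$, the cumulants of $c(P_t-t)$ are $tc^r$ for $r\geq2$, and the first cumulant is $0$. Cumulant convergence together with \eqref{MCF1} yields convergence of every moment. Since a centered compound Poisson law with a single jump size is moment determinate (Section~\ref{SEC_22}), the moment argument at the end of Section~\ref{SEC_dist} gives convergence in law.

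For \eqref{rainbow_J4}, I compute the add-one derivative explicitly. If $z\in A_{j,k}^{(m)}$, then $D_zH_m^{(q)}=c\prod_{l\neq k}U_{j,l}^{(m)}$, because adding a point raises $U_{j,k}^{(m)}$ by one and leaves every other count unchanged. If $z$ lies outside all the sets $A_{j,k}^{(m)}$, the derivative vanishes. Therefore
\[
\Jfour\bigl(H_m^{(q)}(c,t)\bigr)=\sum_{j=1}^m\sum_{k=1}^q a_m\,c^4\,M_4(a_m)^{q-1}
= mq\,a_m c^4\,(a_m+3a_m^2)^{q-1}.
\]
This simplifies to $mq\,c^4a_m^q(1+3a_m)^{q-1}=qtc^4(1+3a_m)^{q-1}$, which tends to $qtc^4$ as $m\to\infty$ since $a_m\to0$.

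The main obstacle, though mild, is bookkeeping the cumulant expansion so that the $m\,a_m^{2q}$ terms are visibly negligible. An alternative that avoids \eqref{MCF2} is to expand $\log\E[e^{sX_1}]$ formally: since $\E[X_1^s]=O(1/m)$ for each fixed $s$, one gets $\log\E[e^{sX_1}]=\E[e^{sX_1}]-1+O(m^{-2})$ coefficientwise.
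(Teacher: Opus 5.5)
Your proposal is correct and follows essentially the same route as the paper. Both use cumulant additivity over the independent blocks and the moment--cumulant inversion restricted to singleton-free partitions, which gives $\cum_r$ of a block $=a_m^q+O_r(a_m^{q+1}+a_m^{2q})$; both then use moment determinacy of the centered Poisson law, and the explicit add-one derivative $c\prod_{\ell\neq k}U_{j,\ell}^{(m)}$ on each cell to compute $\Jfour$.
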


\begin{proof}
Let $U_{a,1},\ldots,U_{a,q}$ be independent centered Poisson
variables of mean $a$, and put
$
 W_a=\prod_{k=1}^qU_{a,k}.
$
By \eqref{Poi_mom}, for each fixed $r\geq2$,

\noi
\begin{align}\label{WAR}
 \E[W_a^r]
 =M_r(a)^q
 =a^q+O_r(a^{q+1}).
\end{align}

Since $\E[W_a]=0$, every partition containing a singleton block
has zero contribution in the moment--cumulant formula \eqref{MCF2}.
The one-block partition contributes to \eqref{WAR}.
Every other nonvanishing partition has at least two blocks, each of
size at least two.  Since, for every fixed $s\geq2$,
 $\E[W_a^s]=O_s(a^q)$,
a partition with $k\geq2$ blocks contributes $O_r(a^{qk})$, and hence
$O_r(a^{2q})$.  Consequently,
\[
 \cum_r(W_a)
 =
 a^q+O_r(a^{q+1}+a^{2q}).
\]

Since the $m$ blocks in \eqref{rainbow_def} are independent, we deduce from
\eqref{cum_add} that
\noi
\begin{align*}
 \cum_r\bigl(H_m^{(q)}(c,t)\bigr)
 =mc^r\cum_r(W_{a_m})
 \longrightarrow tc^r,
\end{align*}
which proves \eqref{rainbow_cum}.  The limiting cumulants are those of
$c(P_t-t)$ by \eqref{cum_I1}.  It follows from \eqref{MCF1} that every
fixed moment converges.  Since the centered Poisson law is
moment-determinate, the moment method recorded in Section~\ref{SEC_dist}
proves the convergence in law and therefore \eqref{rainbow_mom}.

If $z\in A_{j,k}^{(m)}$, then
$D_zH_m^{(q)}(c,t)=c\prod_{\ell\neq k}U_{j,\ell}^{(m)}$.
Integrating over the $qm$ cells and using $M_4(a)=a+3a^2$, we obtain
\[
 \Jfour\bigl(H_m^{(q)}(c,t)\bigr)
 =qma_mc^4M_4(a_m)^{q-1}
 =qtc^4(1+3a_m)^{q-1},
\]
which proves \eqref{rainbow_J4}.
\qedhere

\end{proof}

\begin{remark}\rm
\label{rem_inv_fail}
The rainbow kernels also show that the diffuse assumption in
Theorem~\ref{thm4} is essential.  Let $G_m^{(q)}(c,t)$ denote the
same-kernel Gaussian integral associated with
$H_m^{(q)}(c,t)$.  Since the sets $A_{j,k}^{(m)}$ are pairwise
disjoint, we may write
$W(A_{j,k}^{(m)})=\sqrt{a_m}Z_{j,k}$ with the $Z_{j,k}$ independent
standard normal random variables.  The products
$\prod_{k=1}^qZ_{j,k}$ are i.i.d., centered, and have variance one.
Hence, by the classical central limit theorem,
\noi
\begin{align*}
 G_m^{(q)}(c,t)
 &=c\sqrt{\frac{t}{m}}
 \sum_{j=1}^m\prod_{k=1}^qZ_{j,k}
 \xrightarrow{\mathrm d}\NN(0,c^2t).
\end{align*}
On the Poisson side, Proposition~\ref{prop_rainbow} gives
\noi
\begin{align*}
 H_m^{(q)}(c,t)\xrightarrow{\mathrm d}c(P_t-t)
 \AND
 \Jfour\bigl(H_m^{(q)}(c,t)\bigr)\longrightarrow qtc^4>0.
\end{align*}
Thus, normal convergence of a tetrahedral Gaussian chaos does not by
itself imply normal convergence of its Poisson analogue.  The usual
uniform-moment hypothesis from \cite{NPR10} also fails here: for
$P_{a_m}\sim\Pois(a_m)$,
\[
 \E\left[
 \left|\frac{P_{a_m}-a_m}{\sqrt{a_m}}\right|^3
 \right]\longrightarrow\infty,
\]
as follows already from the event $\{P_{a_m}=1\}$.  Since
$a_m\to0$, this triangular array also lies outside the intensity
lower-bound regime of \cite[Theorem~3.4]{PZ14}; see
\cite[Theorem~3.2 and Remark~3.3]{PZ14}.  The fourth add-one energy
therefore acts as a genuine Poisson
Lindeberg condition, rather than as a technical remainder in the
proof of Theorem~\ref{thm4}.
\end{remark}

\subsection{Finite and infinite collections of moments}
\label{SEC_36}

The rainbow construction \eqref{rainbow_def} reduces the problem to choosing finite
compound-Poisson laws with prescribed Gamma moments.  We do so
by discretizing the Gamma L\'evy measure with Gauss--Laguerre
quadrature.

We use the $K$-point Gauss--Laguerre rule in the normalization of
\cite[Section~10.5]{QSS07}.  Let $\{L_n\}$ denote the unnormalized
Laguerre polynomials defined by
\[
 L_n(x)=e^x\frac{\dd^n}{\dd x^n}(e^{-x}x^n),
 \quad n\geq0.
\]
These are orthogonal polynomials on $\R_+$ with respect to the weight
$e^{-x}$.  For every polynomial $p$ of degree at most $2K-1$,
\begin{align}\label{GL_exp}
 \int_0^\infty e^{-x}p(x)\,\dd x
 =\sum_{i=1}^K\omega_{K,i}p(y_{K,i}),
\end{align}
where $0<y_{K,1}<\cdots<y_{K,K}$ are the zeros of $L_K$ and
\noi
\begin{align*}
 \omega_{K,i}
 =\frac{(K!)^2y_{K,i}}{L_{K+1}(y_{K,i})^2},
 \quad 1\leq i\leq K,
\end{align*}
see \cite[(10.41)]{QSS07}.  In particular,

\noi
\begin{align}
 \sum_{i=1}^K\omega_{K,i}y_{K,i}^\ell
 =\ell!,
 \quad 0\leq\ell\leq2K-1.
 \label{GL_mom}
\end{align}
The centered Gamma law $Z_\nu$ is moment-determinate,
and we will
combine this fact with \eqref{GL_mom} to approximate its L\'evy measure $\Lambda_\nu$ in \eqref{Lam_nu}
 by finite atomic measures.

With $y_{K,i},\omega_{K,i}$ as in \eqref{GL_exp}, we put

\noi
\begin{align}
 c_{K,i} =2y_{K,i}
\AND
 t_{K,i} =\frac{\nu\omega_{K,i}}{c_{K,i}}
 \label{ct_def}
\end{align}
for $1\leq i\leq K$.
Let $P_{K,1},\ldots,P_{K,K}$ be independent Poisson variables with
means $t_{K,i}$, and define

\noi
\begin{align}\label{def_YK}
 Y_K=\sum_{i=1}^Kc_{K,i}
 (P_{K,i}-t_{K,i}).
\end{align}

\begin{lemma}
\label{lem_YK}
Let $Z_\nu$ be as in \eqref{def_Znu}.
For every $2\leq r\leq2K$, we have

\noi
\begin{align*}
 \cum_r(Y_K)
 =2^{r-1}(r-1)!\nu
 =\cum_r(Z_\nu).
\end{align*}
Consequently, the first $2K$ moments of $Y_K$ and $Z_\nu$
coincide, but $\cL(Y_K)\neq\cL(Z_\nu)$.

\end{lemma}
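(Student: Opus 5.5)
The plan is to compute the cumulants of $Y_K$ directly and then compare with those of $Z_\nu$. The summands $c_{K,i}(P_{K,i}-t_{K,i})$ are independent, so by \eqref{cum_add} and the fact that every cumulant of order $r\geq2$ of a centered Poisson variable equals its mean (the case $h=\ind$ of \eqref{cum_I1}, or directly from the Poisson cumulant generating function), we get
\begin{align*}
 \cum_r(Y_K)=\sum_{i=1}^K c_{K,i}^r t_{K,i}
 =\nu\sum_{i=1}^K\omega_{K,i}c_{K,i}^{r-1}
 =\nu\,2^{r-1}\sum_{i=1}^K\omega_{K,i}y_{K,i}^{r-1},
\end{align*}
where we used \eqref{ct_def}. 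For $2\leq r\leq 2K$ the exponent $\ell=r-1$ lies in $\{1,\ldots,2K-1\}$. The Gauss--Laguerre identity \eqref{GL_mom} then gives $\sum_i\omega_{K,i}y_{K,i}^{r-1}=(r-1)!$, so $\cum_r(Y_K)=2^{r-1}(r-1)!\nu$. This equals $\cum_r(Z_\nu)$ by the formula following \eqref{Gamma_cgf}. The same computation can be read as matching the moments $\int x^r\Lambda_\nu(\dd x)$ of the Lévy measure \eqref{Lam_nu} by a discrete measure; this is a useful sanity check on the normalization $c=2y$.

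Both variables are centered, so their first cumulants agree as well. Hence $\kappa_1,\ldots,\kappa_{2K}$ coincide for $Y_K$ and $Z_\nu$. The moment--cumulant formula \eqref{MCF1} expresses $m_r$ through $\kappa_1,\ldots,\kappa_r$ only, so the first $2K$ moments coincide too.

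For $\cL(Y_K)\neq\cL(Z_\nu)$ I would argue exactly as in the proof of Theorem~\ref{thm3}. $Y_K$ takes values in the countable set $\{\sum_i c_{K,i}(n_i-t_{K,i}):n_i\in\mathbb{N}_0\}$, so its law is purely atomic. By contrast, $Z_\nu$ is absolutely continuous. Alternatively, one could compare cumulants of order $2K+1$ or the cumulant generating functions, but the atomic/continuous argument is cleaner. The only delicate point is the index bookkeeping: checking that $r-1$ stays within the exactness range $0\leq\ell\leq2K-1$ of \eqref{GL_mom}, and that the weights $\omega_{K,i}$ and nodes $y_{K,i}$ are positive, so that $t_{K,i}>0$ and the Poisson variables are well defined. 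Positivity of the nodes is stated, and positivity of the weights follows from their explicit formula since $y_{K,i}>0$.
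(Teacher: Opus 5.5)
Your proposal is correct and matches the paper's proof: the same cumulant computation via \eqref{cum_add}, \eqref{ct_def} and \eqref{GL_mom}, followed by \eqref{MCF1} for the moments. The only difference is in the final step. The paper exhibits a specific atom of $Y_K$ at $-\nu$, using $\ell=0$ in \eqref{GL_mom} to get $\sum_i t_{K,i}c_{K,i}=\nu$. You instead note that $Y_K$ takes only countably many values, as in the proof of Theorem~\ref{thm3}. Both arguments are valid.
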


\begin{proof}
Since cumulants are additive over independent sums, we deduce from
\eqref{ct_def} and \eqref{GL_mom} that

\noi
\begin{align}\label{CUMR}
 \cum_r(Y_K)
 =\sum_{i=1}^Kt_{K,i}c_{K,i}^r
 =\nu\,2^{r-1}\sum_{i=1}^K\omega_{K,i}y_{K,i}^{r-1}
 =2^{r-1}(r-1)!\nu
\end{align}
for $r\leq2K$.
The equality of the first $2K$ moments now follows from the
moment--cumulant formula \eqref{MCF1}.

Taking $\ell=0$ in \eqref{GL_mom} and using \eqref{ct_def}, we obtain
\noi
\begin{align*}
 \sum_{i=1}^Kt_{K,i}c_{K,i}
 =\nu\sum_{i=1}^K\omega_{K,i}=\nu.
\end{align*}
On the positive-probability event that all $P_{K,i}$ vanish, we have
$Y_K=-\nu$.  Thus $Y_K$ has an atom at $-\nu$, whereas the centered
Gamma law is continuous.  This completes the proof.
\end{proof}

\begin{theorem}
\label{thm44}
Fix an integer $q\geq1$ and $\nu\in(0,\infty)$.  On a suitable
atomless $\s$-finite Poisson space, the following assertions hold.

{\rm\bf (a)} For every finite integer $M\geq4$, there is a sequence
$F_n\in\cC_q^\eta$ such that $\E[F_n^2]=2\nu$ and
\noi
\begin{align}
 \E[F_n^r]
 &\longrightarrow\E[Z_\nu^r],
 \quad 1\leq r\leq M,
 \label{M_mom}\\
 F_n&\xrightarrow{\mathrm d}Y_M,
 \quad
 \cL(Y_M)\neq\cL(Z_\nu),
 \label{M_law}\\
 \Jfour(F_n)&\longrightarrow48q\nu > 0.
 \notag
\end{align}

{\rm\bf (b)} There is a sequence
$R_n\in\cC_q^\eta$ such that
\noi
\begin{align}
& R_n\xrightarrow{\mathrm d}Z_\nu
 \AND
 \E[R_n^r]\longrightarrow\E[Z_\nu^r]
 \quad\text{for every }r\geq1,
 \notag\\
&\text{while}\quad\Jfour(R_n)\longrightarrow48q\nu>0.
 \label{all_J4}
\end{align}

\end{theorem}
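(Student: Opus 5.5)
The plan is to lift the finite compound-Poisson laws $Y_K$ of Lemma~\ref{lem_YK} into $\cC_q^\eta$ with the rainbow construction of Proposition~\ref{prop_rainbow}, one independent rainbow sum for each atom of the discretized L\'evy measure.

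\textbf{Construction.} For fixed $K$ and $m$, set
\[
 H_{K,m}=\sum_{i=1}^K H_m^{(q)}(c_{K,i},t_{K,i}),
\]
where the $K$ rainbow sums are built on pairwise disjoint families of cells. They are then independent, and their sum lies in $\cC_q^\eta$.

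\textbf{Cumulants, moments, and $\Jfour$.} By additivity \eqref{cum_add} and \eqref{rainbow_cum}, for every $r\geq2$,
\[
 \cum_r(H_{K,m})\xrightarrow{m\to\infty}\sum_i t_{K,i}c_{K,i}^r=\cum_r(Y_K),
\]
and $H_{K,m}$ is centered. By \eqref{MCF1} every moment then converges, and moment determinacy of $Y_K$ gives $H_{K,m}\to Y_K$ in law. For the fourth add-one energy, disjoint supports give $\Jfour$ additivity \eqref{J4_blocks} (with $a_j=1$). Together with \eqref{rainbow_J4},
\[
 \Jfour(H_{K,m})\to q\sum_i t_{K,i}c_{K,i}^4=q\,\cum_4(Y_K)=48q\nu
\]
as soon as $2K\geq4$, by Lemma~\ref{lem_YK}.

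\textbf{Exact variance.} To get $\E[F_n^2]=2\nu$ exactly, note that the variance of $H_{K,m}$ is $\sum_i c_{K,i}^2 t_{K,i}=2\nu$ for every $m$. Indeed, \eqref{WAR} gives $\E[W_a^2]=a^q$ exactly, so $\cum_2(H_m^{(q)}(c,t))=mc^2a_m^q=c^2t$ for each $m$. Hence no rescaling is needed. If one preferred to rescale, it would be by a factor tending to $1$, which changes nothing in the limits.

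\textbf{Part (a).} Take $K$ with $2K\geq M$ and let $F_n=H_{K,n}$. Then \eqref{M_mom} follows because the first $2K\geq M$ moments of $Y_K$ coincide with those of $Z_\nu$ by Lemma~\ref{lem_YK}. The law convergence and $\cL(Y_K)\neq\cL(Z_\nu)$ come from the same lemma, and the $\Jfour$ limit was computed above. I read the $Y_M$ in \eqref{M_law} as this $Y_K$, for example $K=\lceil M/2\rceil$; if a literal $Y_M$ is required, take $K=M$, which also satisfies $2K\geq M$.

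\textbf{Part (b).} This is a diagonal argument. For each $K$, choose $m_K$ so large that
\[
 |\cum_r(H_{K,m_K})-\cum_r(Y_K)|\leq 1/K \quad\text{for all } 2\leq r\leq 2K,
\]
and also $|\Jfour(H_{K,m_K})-48q\nu|\leq1/K$. Set $R_K=H_{K,m_K}$. For each fixed $r$ and all $K\geq r/2$,
\[
 |\cum_r(R_K)-\cum_r(Z_\nu)|\leq1/K\to0.
\]
So all cumulants, and hence by \eqref{MCF1} all moments, converge to those of $Z_\nu$. Since $Z_\nu$ is moment-determinate (Section~\ref{SEC_dist}), $R_K\to Z_\nu$ in law, and \eqref{all_J4} holds by the choice of $m_K$.

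The only delicate point is that \eqref{rainbow_cum} is a fixed-$r$ limit whose error depends on $(c,t)$. This is harmless, because for fixed $K$ there are finitely many atoms and finitely many orders $r\leq 2K$, so a single $m_K$ works. The atomless infinite-mass control space supplies all the disjoint cells needed.
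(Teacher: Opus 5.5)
Your proposal is correct and follows the paper's proof essentially step by step. It uses the same Gauss--Laguerre atoms $(c_{K,i},t_{K,i})$ lifted by independent rainbow sums on disjoint regions, the same exact-variance observation $\E[W_a^2]=a^q$, the same $\Jfour$ computation via \eqref{J4_blocks} and \eqref{CUMR} with $r=4$, and a diagonal choice of $m_K$ for part~(b). The only difference is that you diagonalize on cumulants and apply moment determinacy of $Z_\nu$ directly to $R_K$, so the paper's $d_{\mathrm{BL}}$ condition \eqref{diag_BL} and Lemma~\ref{lem_YK_Gam} are not needed. This is a harmless streamlining: the paper's own moment condition \eqref{diag_mom}, combined with \eqref{YK_eve}, would already give the same conclusion.
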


\begin{proof}[Proof of Theorem~\ref{thm44}\rm\textbf{(a)}]
Fix $M\geq4$ and choose
\noi
\begin{align*}
 K\geq\left\lceil\frac M2\right\rceil \geq 2.
\end{align*}
For each $i$, take an independent rainbow lift
$H_m^{(q)}(c_{K,i},t_{K,i})$ as in \eqref{rainbow_def}
with $c_{K,i},t_{K,i}$ as in \eqref{ct_def},
using mutually disjoint regions of the
control space $\cZ$, and set
\noi
\begin{align}
 F_m^{(q,K)}
 =\sum_{i=1}^K
 H_m^{(q)}(c_{K,i},t_{K,i}),
 \label{Fm_qK}
\end{align}
which belongs to $\cC_q^\eta$.

By Proposition~\ref{prop_rainbow}, \eqref{rainbow_mom}, and
independence of the $K$ summands,
$F_m^{(q,K)}\xrightarrow{\mathrm d}Y_K$, and every fixed moment
converges to the corresponding moment of $Y_K$.  Set $Y_M=Y_K$.
Lemma~\ref{lem_YK} therefore yields
\eqref{M_mom} and \eqref{M_law}.

Since $\cum_2(W_{a_m})=a_m^q=t/m$, independence and
\eqref{CUMR} with $r=2$ give
\[
 \E[(F_m^{(q,K)})^2]
 =\sum_{i=1}^Kt_{K,i}c_{K,i}^2
 =2\nu.
\]
The regions used for different $i$ are disjoint.  We deduce from
\eqref{J4_blocks}, Proposition~\ref{prop_rainbow}, and
\eqref{CUMR} with $r=4$ that
\[
 \Jfour(F_m^{(q,K)})
 \longrightarrow q\sum_{i=1}^Kt_{K,i}c_{K,i}^4
 =48q\nu.
\]
Relabeling $m$ as $n$ proves part~{\bf(a)}.
\qedhere

\end{proof}

We now let the number of quadrature points increase.

\begin{lemma}
\label{lem_YK_Gam}
Let $Y_K$ be as in \eqref{def_YK}.
As $K\to\infty$,
\noi
\begin{align}
 Y_K\xrightarrow{\mathrm d}Z_\nu,
 \label{YK_Gam}
\end{align}
and, for every fixed $r\geq1$,
\noi
\begin{align}
 \E[Y_K^r]=\E[Z_\nu^r]
 \quad\text{for all sufficiently large }K.
 \label{YK_eve}
\end{align}
\end{lemma}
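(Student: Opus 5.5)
The plan is to prove the moment statement \eqref{YK_eve} first and then deduce \eqref{YK_Gam} from it by the moment method of Section~\ref{SEC_dist}.

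\emph{Step 1: moments.} Fix $r\geq1$. For $r=1$ both means vanish, since $Y_K$ and $Z_\nu$ are centered. For $r\geq2$, take any $K$ with $2K\geq r$. Lemma~\ref{lem_YK} gives $\cum_s(Y_K)=\cum_s(Z_\nu)$ for every $2\leq s\leq r$, and both first cumulants are zero. The moment--cumulant formula \eqref{MCF1} expresses $\E[X^r]$ through cumulants of order at most $r$ only. Hence $\E[Y_K^r]=\E[Z_\nu^r]$ for all $K\geq\lceil r/2\rceil$, which is \eqref{YK_eve}.

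\emph{Step 2: convergence in law.} By Step~1, every moment of $Y_K$ is eventually constant and equal to the corresponding moment of $Z_\nu$. In particular it converges.

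By \eqref{Gamma_cgf}, $Z_\nu$ has a moment generating function that is finite near zero, so $Z_\nu$ is moment determinate. The argument recorded at the end of Section~\ref{SEC_dist} then applies. The bounded second moments give tightness. The bounded higher even moments, together with \eqref{UI_tail}, give uniform integrability of each fixed power, so every subsequential weak limit has the moments of $Z_\nu$. Determinacy identifies each such limit with $Z_\nu$, and this yields \eqref{YK_Gam}.

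There is no genuine obstacle. The only point needing care is that the equality in Lemma~\ref{lem_YK} holds only for $r\leq2K$. So for each fixed order we must wait until $K\geq\lceil r/2\rceil$, which is harmless because we let $K\to\infty$. Note also that Gauss--Laguerre convergence for general continuous functions is never needed: the exactness \eqref{GL_mom} on polynomials suffices.
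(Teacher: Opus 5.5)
Your proposal is correct and follows essentially the same route as the paper: read off the moment equalities from Lemma~\ref{lem_YK} for $K\geq\lceil r/2\rceil$, then conclude by tightness, uniform integrability via \eqref{UI_tail}, and moment determinacy of $Z_\nu$ from \eqref{Gamma_cgf}. The paper's proof runs the same subsequence argument directly on $(Y_K)_K$ and does not invoke the general moment-method paragraph of Section~\ref{SEC_dist}, but this is only a difference in presentation.
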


\begin{proof}

The moment statement follows directly from Lemma~\ref{lem_YK}.  In
particular, $\E[Y_K^2]=2\nu$, so $(Y_K)_K$ is tight.  Let
$(Y_{K_j})_j$ be a weakly convergent subsequence, with limit $Y$.
For every fixed $r$, the moments of order $2r$ eventually agree with
those of $Z_\nu$; hence $(|Y_{K_j}|^r)_j$ is uniformly integrable by
\eqref{UI_tail}.  Therefore
$\E[Y^r]=\E[Z_\nu^r]$
for every $r\geq1$.
The centered Gamma law is moment-determinate by \eqref{Gamma_cgf};
hence $Y=Z_\nu$ in law.  Every weakly convergent
subsequence has the same limit, proving \eqref{YK_Gam}.
\end{proof}

\begin{proof}[Proof of Theorem~\ref{thm44}{\rm\bf(b)}]
For each $K$, the sequence $F_m^{(q,K)}$ in \eqref{Fm_qK} converges in
law and in every fixed moment to $Y_K$, while its add-one energy
converges to $48q\nu$.  Choose a diagonal sequence $m_K\to\infty$ so
large that
\noi
\begin{align}
 d_{\mathrm{BL}}
 \bigl(F_{m_K}^{(q,K)},Y_K\bigr)&\leq K^{-1},
 \label{diag_BL}\\
 \left|
 \E[(F_{m_K}^{(q,K)})^r]-\E[Y_K^r]
 \right|&\leq K^{-1},
 \quad1\leq r\leq K,
 \label{diag_mom}\\
 \left|
 \Jfour(F_{m_K}^{(q,K)})-48q\nu
 \right|&\leq K^{-1}.
 \label{diag_J4}
\end{align}
Set
$R_K=F_{m_K}^{(q,K)}.$
We deduce from Lemma~\ref{lem_YK_Gam} and \eqref{diag_BL} that
$R_K\xrightarrow{\mathrm d}Z_\nu$.  For each fixed $r$, it follows from
\eqref{diag_mom} and \eqref{YK_eve} that
$\E[R_K^r]\to\E[Z_\nu^r]$.  Finally, \eqref{all_J4} follows from
\eqref{diag_J4}.

This completes the proof of part~{\bf(b)}.
\qedhere

\end{proof}

\subsection{Diffuse and rare-jump realizations}
\label{SEC_37}

The positive-energy limit in Theorem~\ref{thm44}{\bf(b)} is not a
universal Gamma value of $\Jfour$.  The same target can also be reached
through a diffuse sequence.  We record a simple construction.

\begin{proposition}
\label{prop_diff_ex}
Let $d\geq1$ be an integer.  There is a
sequence $Q_n\in\cC_2^\eta$ such that
\noi
\begin{align*}
 Q_n\xrightarrow{\mathrm d}Z_d,
 \quad
  \Jfour(Q_n)\longrightarrow0,
  \AND
 \E[Q_n^r] \longrightarrow\E[Z_d^r]
 \quad\text{for every }r\geq1.
\end{align*}
\end{proposition}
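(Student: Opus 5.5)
Since $Z_d$ has the law of $\sum_{i=1}^d(X_i^2-1)$ with $X_i$ i.i.d.\ standard normal, the plan is to realize each squared Gaussian inside the second Poisson chaos through a diffuse first-chaos element. Fix disjoint sets $B_{n,1},\ldots,B_{n,d}$ with $\mu(B_{n,i})=n$, and put
\[
 X_{n,i}=\frac{\eta(B_{n,i})-n}{\sqrt n}=I_1^\eta\bigl(n^{-1/2}\ind_{B_{n,i}}\bigr).
\]
These variables are independent, centered, and have unit variance. By the special case of \eqref{I1_prod} recorded after it, $X_{n,i}^2-1=I_2^\eta(n^{-1}\ind_{B_{n,i}}^{\ot2})+I_1^\eta(n^{-1}\ind_{B_{n,i}})$, which is not in $\cC_2^\eta$. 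We therefore define
\[
 Q_n=\sum_{i=1}^d I_2^\eta\bigl(n^{-1}\ind_{B_{n,i}}^{\ot2}\bigr)
 =\sum_{i=1}^d\Bigl(X_{n,i}^2-1-n^{-1/2}X_{n,i}\Bigr)\in\cC_2^\eta .
\]

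For the moments, the correction $n^{-1/2}X_{n,i}$ tends to $0$ in every $L^p$, because each centered Poisson moment of $X_{n,i}$ is bounded uniformly in $n$ (by \eqref{Poi_mom}, or by the cumulants $\cum_r(X_{n,i})=n^{1-r/2}$ from \eqref{cum_I1}). The cumulant formula also gives $\cum_r(X_{n,i})\to0$ for $r\geq3$. Hence every moment of $X_{n,i}$ converges to the corresponding standard normal moment, by \eqref{MCF1}. Expanding $\E[Q_n^r]$ binomially and using independence over $i$, every fixed moment of $Q_n$ converges to that of $\sum_i(X_i^2-1)$, that is, to $\E[Z_d^r]$. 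Convergence in law then follows from moment determinacy of $Z_d$ (Section~\ref{SEC_dist}). Alternatively, one can use the classical CLT for $X_{n,i}$ together with Slutsky's lemma.

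For $\Jfour(Q_n)$, the blocks are supported on disjoint $B_{n,i}^2$, so \eqref{J4_blocks} reduces the computation to a single block. Alternatively, by Remark (iv) after Theorem~\ref{thm2},
\[
 \Jfour = 48\|f\star_2^1 f\|_{L^2(\mu)}^2+16\|f\|_{L^4(\mu^2)}^4 .
\]
With $f=n^{-1}\ind_B^{\ot2}$ and $\mu(B)=n$, we get $(f\star_2^1f)(z)=n^{-2}\cdot n\ind_B(z)$, so $\|f\star_2^1f\|_2^2=n^{-2}\cdot n=n^{-1}$, and $\|f\|_4^4=n^{-4}n^2=n^{-2}$. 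Hence $\Jfour(Q_n)=d(48n^{-1}+16n^{-2})\to0$.

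The only delicate point is this kernel bookkeeping. One must remove the first-chaos drift so that $Q_n$ genuinely lies in $\cC_2^\eta$, and one must check that the normalisation leaves $\E[Q_n^2]=2d\cdot(1)$ up to the $O(n^{-1})$ correction. The proposition does not require exact variance $2d$; the variance is $2d$ exactly since $2!\|f\|_2^2=2$ per block. Everything else is routine.
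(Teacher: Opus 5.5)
Your proposal is correct and follows essentially the paper's own construction. It uses the same diffuse blocks $n^{-1/2}\ind_{B_{n,i}}$ and the same removal of the first-chaos drift $n^{-1/2}X_{n,i}$, proves moment convergence through cumulants and moment determinacy, and obtains the same value $\Jfour(Q_n)=16dn^{-1}(3+n^{-1})$. The paper computes $D_zQ_n=2g_{n,j}(z)X_{n,j}$ directly rather than quoting the contraction identity from the remark, which is the same calculation. One small slip: \eqref{Poi_mom} is an expansion for small means $a\downarrow0$ and does not give the uniform moment bounds you need when the mean is $n\to\infty$, but your alternative argument via the cumulants in \eqref{cum_I1} does.
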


\begin{proof}
Take the control space
$\{1,\ldots,d\}\times\R_+$ with counting measure times Lebesgue
measure, and let
$ A_{n,j}=\{j\}\times(0,n]$
for $1\leq j\leq d.$
Thus, for each $n$, the sets $A_{n,1},\ldots,A_{n,d}$ are disjoint
and have measure $n$.  Put
\noi
\begin{align*}
 g_{n,j}=n^{-1/2}\ind_{A_{n,j}}
 \AND
 X_{n,j}=I_1^\eta(g_{n,j}).
\end{align*}

By \eqref{cum_I1},
$ \cum_2(X_{n,j})=1$
and
$ \cum_r(X_{n,j})=n^{1-r/2}\longrightarrow0$
for $r\geq3$.
It follows from \eqref{MCF1} that all moments converge to those of a
standard normal variable.  Since the coordinates are independent, all
joint moments factor and converge to the corresponding moments of a
vector $(N_1,\ldots,N_d)$ of independent standard normal variables.
Moment determinacy of the Gaussian product law then yields joint
convergence in distribution.  Set
\noi
\begin{align*}
 Q_n=\sum_{j=1}^d I_2^\eta(g_{n,j}^{\ot2}).
\end{align*}

We deduce from the first-chaos product formula \eqref{I1_prod} that
\noi
\begin{align*}
 I_2^\eta(g_{n,j}^{\ot2})
 =X_{n,j}^2-I_1^\eta(g_{n,j}^2)-1
 =X_{n,j}^2-\frac{X_{n,j}}{\sqrt n}-1.
\end{align*}
Consequently,
$
 Q_n\xrightarrow{\mathrm d}
 \sum_{j=1}^d(N_j^2-1)
 \stackrel{\mathrm d}=Z_d,
$
and the same polynomial representation yields convergence of every
moment.

For $z\in A_{n,j}$,
$D_zQ_n=2g_{n,j}(z)X_{n,j}$.
Since $\|g_{n,j}\|_{L^4(\mu)}^4=n^{-1}$ and
$\E[X_{n,j}^4]=3+n^{-1}$,
we have
$\Jfour(Q_n)=16d n^{-1}(3+n^{-1})\to0$.  This completes the
proof.
\end{proof}

\begin{remark}\rm \label{rem_J4}
(i)
Combining Proposition~\ref{prop_diff_ex} with
Theorem~\ref{thm44}{\bf(b)}, we see that, for the same order $q=2$ and the
same target $Z_d$, either limit can occur:
\noi
\begin{align*}
 \Jfour(Q_n)\longrightarrow0
 \quad\text{or}\quad
 \Jfour(R_n)\longrightarrow96d.
\end{align*}
Thus, $\Jfour$ records the mechanism of convergence rather than the
limiting law.

(ii) The rare-jump construction also explains why ordinary moments cannot
replace $\Jfour$.  The Gauss--Laguerre rule discretizes the Gamma L\'evy
measure at finitely many jump sizes.  The rainbow lift then embeds each
centered compound-Poisson summand into any prescribed fixed chaos
order.
As the atomic measures approximate the L\'evy measure, the laws
converge to Gamma, but the fourth add-one energy records the aggregate
fourth power of the jumps and converges to $48q\nu$.
By contrast, when $\Jfour(F_n)\to0$, all individual add-one effects
are negligible at fourth order, and Theorem~\ref{thm2}
reduces Gamma approximation to the mixed moment defect
$\Del_\nu(F_n)$.  The next section explains why this regime is
close to the Gaussian one and why the parity of the chaos order
matters.
\end{remark}

\section{Poisson--Gaussian invariance and Gamma parity}
\label{SEC5}

This section proves Theorem~\ref{thm4},
Proposition~\ref{prop17}, Proposition~\ref{prop18}, and
Proposition~\ref{prop19}.  The first part is independent of the Gamma
target: we approximate an arbitrary kernel by tetrahedral step kernels
and compare the corresponding Poisson and Gaussian multilinear forms
through a coordinatewise Lindeberg replacement.  We then prove the
separation statements in Proposition~\ref{prop17}.  The remaining
subsections propagate $\Jfour$ through iterated derivatives, control
the contractions absent from the Gaussian product formula, and derive
the moment transfers and the Gamma parity results.

Let $q\geq1$ be fixed and let $f\in L_s^2(\mu^q)$.  Set
\noi
\begin{align}\label{FG_1}
 F=I_q^\eta(f)\in\cC_q^\eta\cap L^4(\Omega)
 \AND
 G=I_q^W(f)\in\cC_q^W.
\end{align}

\subsection{The Poisson--Gaussian invariance principle}
\label{SEC_50}

The proof of Theorem~\ref{thm4} has two steps.  We first reduce to
bounded tetrahedral step kernels and then replace the Poisson cell
counts by Gaussian increments one coordinate at a time.  A symmetric
step kernel is called \emph{tetrahedral} relative to a finite partition
$A_1,\ldots,A_N$ if it vanishes whenever two of its arguments belong
to the same partition cell.
For the proof of the invariance principle, we may assume without loss
of generality that the control measure is nonatomic.  Indeed, we use
the standard marking enlargement
$(\cZ\times[0,1],\sZ\otimes\mathcal B([0,1]),\mu\otimes\dd u)$
and lift the kernel in the natural way.  This preserves the laws of
the corresponding Poisson and Gaussian multiple integrals as well as
$\Jfour$.

\begin{lemma}
\label{lem_tetra_core}
Let $F$ and $G$ be as in \eqref{FG_1}.  After passing,
if necessary, to the marked control space
$(\cZ\times[0,1],\mu\otimes\dd u)$, there are bounded symmetric
tetrahedral step kernels $f_m$ of finite-measure support such that,
with $F_m=I_q^\eta(f_m)$ and $G_m=I_q^W(f_m)$,
\noi
\begin{align}
 \|F_m-F\|_{L^4(\Omega)}
 +\|G_m-G\|_{L^4(\Omega)}
 &\longrightarrow0,
 \label{tetra_L4}\\
 \int_\cZ\E[|D_z(F_m-F)|^4]\mu(\dd z)
 &\longrightarrow0.
 \label{tetra_D4}
\end{align}
The partition cells of the kernels $f_m$ are chosen to have measure at most one.
\end{lemma}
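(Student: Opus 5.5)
Our plan is a two-stage approximation: first the martingale core of Proposition~\ref{prop_core}, then removal of the diagonal blocks of the resulting step kernel after refining the partition on the marked space.

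\emph{Step 1 (martingale core).} Proposition~\ref{prop_core} with $p=4$ gives $\tilde F_n=I_q^\eta(\tilde f_n)\in\cC_q^\bigstar$ with $\tilde f_n=\Pi_{A^{(n)}}^{\ot q}f$. These satisfy $\|\tilde F_n-F\|_4\to0$ and, by \eqref{core_graph} with $r=1$, $\int\E|D_z(\tilde F_n-F)|^4\mu(\dd z)\to0$. On the Gaussian side, $\tilde f_n\to f$ in $L^2(\mu^q)$ (the projections increase to the identity on the relevant closed span). Hypercontractivity \eqref{Gau_hyper} then upgrades this to $\|I_q^W(\tilde f_n)-G\|_4\le 3^{q/2}\sqrt{q!}\,\|\tilde f_n-f\|_2\to0$. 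It therefore suffices to approximate a fixed bounded step kernel $g=\tilde f_n$, supported on $C^q$ with cells $A_1,\dots,A_N$ of finite measure, by tetrahedral kernels, in the same three senses. A diagonal argument over $n$ then finishes the proof.

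\emph{Step 2 (splitting and removing diagonals).} Pass to the marked space, lifting $g$ to $g(z_1,\dots,z_q)$, which is independent of the marks. Split each cell $A_i\times[0,1]$ into $M$ pieces $A_i\times[(k-1)/M,k/M)$, and subdivide further so that every piece has measure at most one. The lift is still a step kernel on this finer partition of total mass $\mu(C)$, and it is constant across the marks. Define $g_M$ as $g$ times the indicator that the $q$ arguments lie in pairwise distinct pieces. Then $g_M$ is bounded, symmetric, tetrahedral and of finite-measure support, and $g-g_M$ is supported on the union $\Delta_M$ of near-diagonals $\{z_i,z_j$ in the same piece$\}$. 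Since all pieces have measure $O(1/M)$, we get $\mu^q(\Delta_M)\lesssim_q \mu(C)^{q-1}/M$. Hence $\|g-g_M\|_{L^2}\to0$, and with it the Gaussian $L^4$ convergence by hypercontractivity.

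\emph{Step 3 (Poisson $L^4$ and derivative convergence) — the main obstacle.} Poisson chaos has no hypercontractivity, so $L^2$ smallness is not enough. We plan to control $\E[I_q^\eta(h)^4]$ for bounded $h$ of finite support through the product formula \eqref{Poi_prod_full} and the isometry. This expresses the fourth moment as a finite sum of norms $\|h\wt{\star_r^\ell}h\|_2^2$. By Cauchy--Schwarz each such norm is bounded by a constant depending on $q$ and $\mu(C)$ times $\|h\|_\infty^4\,\mu^q(\supp h)^{\alpha}$ for some $\alpha>0$, or more robustly by sums of $L^2$/$L^4$ norms of $h$ on $\mu$-finite sets. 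We apply this with $h=g-g_M$, which satisfies $\|h\|_\infty\le\|g\|_\infty$ and has support of measure $O(1/M)$; the bound then tends to $0$. The step that needs care is checking that every contraction $h\star_r^\ell h$, including the partially diagonal ones with $\ell<r$, is dominated by a positive power of $\mu^q(\Delta_M)$ and not merely by $O(1)$. For these we use that $\Delta_M$ has small sections: for fixed $z$, the section $\{\mathbf y:(z,\mathbf y)\in\Delta_M\}$ has measure $O(1/M)$ uniformly in $z$.

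The same estimate handles the derivative. By \eqref{iter_D}, $D_z(F_g-F_{g_M})=qI_{q-1}^\eta(h(z,\cdot))$, so $\int\E|D_z(\cdot)|^4\mu(\dd z)$ is again a finite combination of integrated contraction norms of the sections $h(z,\cdot)$. These are bounded in the same way using the uniform smallness of the sections and $\mu(C)<\infty$. Combining Steps 1--3 and taking a diagonal sequence yields \eqref{tetra_L4} and \eqref{tetra_D4}, with cells of measure at most one.
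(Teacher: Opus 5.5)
Your proposal is correct and follows essentially the paper's route: martingale core, refinement of the partition on the marked (nonatomic) space, deletion of the near-diagonal cells, and Poisson $L^4$ control through the product formula. The key contraction estimate there is $\|h\star_r^\ell h\|_2^2\leq\sup_{\boldsymbol z}\|h_{\boldsymbol z}\|_2^2\,\|h\|_2^2$, that is, bounded sections times a vanishing $\|h\|_2^2$; smallness of sections is not needed, and multi-variable sections are not uniformly small anyway. The Gaussian side is handled by hypercontractivity, followed by a diagonal choice. The one shortcut you miss is that the paper obtains \eqref{tetra_D4} for free from $\Jfour(I_q^\eta(h))\leq(4q-3)\cum_4(I_q^\eta(h))$ in \eqref{D4_prelim}, instead of redoing the contraction estimates for the sections $h(z,\cdot)$.
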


\begin{proof}
We first approximate $f$ by a finite step kernel and then remove the
cells on which two coordinates fall in the same partition atom.  By
the marking construction preceding the lemma, we may assume that
the control measure is nonatomic without changing either
multiple-integral law or the fourth add-one energy.  By
Proposition~\ref{prop_core} and \eqref{core_graph}, there are
bounded symmetric step kernels $g_m$ of finite-measure support such
that
\noi
\begin{align*}
 \|I_q^\eta(g_m)-F\|_{L^4(\Omega)}
 +\|D\{I_q^\eta(g_m)-F\}\|_{L^4(\PP\ot\mu)}
 \longrightarrow0.
\end{align*}
The isometry \eqref{iso} then implies that
$g_m\to f$ in $L^2(\mu^q)$.

By the finite-count construction in \eqref{count_proj}, for each
$m$ there is a finite partition
$\mathcal P_m=\{B_1,\ldots,B_J\}$ of a finite-measure set such that
$g_m$ is constant on the corresponding product cells.  Since the control
measure is nonatomic, for each $k$ we may refine every $B_j$ into
finitely many measurable sets so that the resulting partition
$\mathcal P_{m,k}$ satisfies
$\delta_{m,k}:=\max\{\mu(A):A\in\mathcal P_{m,k}\}\to0$
as $k\to\infty$.
Define
\[
 C_{m,k}=
 \bigcup_{1\leq r<s\leq q}
 \bigcup_{A\in\mathcal P_{m,k}}
 \{(z_1,\ldots,z_q):z_r,z_s\in A\}
 \AND
 g_{m,k}^{\circ}=g_m\ind_{C_{m,k}^c}.
\]
Thus $g_{m,k}^{\circ}$ is tetrahedral relative to
$\mathcal P_{m,k}$.  Let $S_m\in\sZ$ have finite measure and satisfy
$\supp(g_m)\subset S_m^q$.  Then
\[
 \mu^q(C_{m,k}\cap S_m^q)
 \leq
 \binom q2
 \delta_{m,k}\mu(S_m)^{q-1}
 \longrightarrow0.
\]

Let $h_{m,k}=g_{m,k}^{\circ}-g_m$.  Since $g_m$ is bounded and
$\mu(S_m)<\infty$, we have $h_{m,k}\to0$ in every
$L^p(\mu^q)$, $1\leq p<\infty$.  The term with $r=\ell=0$ satisfies
$\|h_{m,k}\otimes h_{m,k}\|_2=\|h_{m,k}\|_2^2\to0$.
For $1\leq r\leq q$, the fully integrated contractions satisfy
$\|h_{m,k}\otimes_r h_{m,k}\|_2\leq\|h_{m,k}\|_2^2\to0$.
If $0\leq\ell<r\leq q$ and $t=r-\ell$, then we deduce from
\eqref{star_section} that
\noi
\begin{align*}
 \|h_{m,k}\star_r^\ell h_{m,k}\|_2^2
 &\leq\sup_{\boldsymbol z\in S_m^t}
 \|(h_{m,k})_{\boldsymbol z}\|_2^2
 \int_{S_m^t}\|(h_{m,k})_{\boldsymbol z}\|_2^2
 \,\mu^t(\dd\boldsymbol z)\longrightarrow0.
\end{align*}
Indeed, if $M_m=2\|g_m\|_\infty$, then the supremum is bounded by
$M_m^2\mu(S_m)^{q-t}$, while the integral equals
$\|h_{m,k}\|_2^2$.  The product formula \eqref{Poi_prod_full} and the
isometry \eqref{iso} now imply
$\|I_q^\eta(h_{m,k})\|_4\to0$.  We then deduce from
\eqref{D4_prelim} that
$\Jfour(I_q^\eta(h_{m,k}))\leq(4q-3)
\cum_4(I_q^\eta(h_{m,k}))\to0$.

On the Gaussian space,
$g_{m,k}^{\circ}\to g_m$ in $L^2(\mu^q)$.  We deduce from
\eqref{Gau_hyper} that
\noi
\begin{align*}
 \big\|I_q^W(g_{m,k}^{\circ})-I_q^W(g_m)\big\|_{L^4(\Omega)}\longrightarrow0.
\end{align*}
Together with $g_m\to f$ in $L^2(\mu^q)$, the same argument gives
$I_q^W(g_m)\to G$ in $L^4(\Omega)$.

For each $m$, choose $k(m)$ so large that the Poisson $L^4$
error, the derivative $L^4$ error, and the Gaussian $L^4$ error caused
by replacing $g_m$ with $g_{m,k(m)}^{\circ}$ are at most $m^{-1}$,
and also $\delta_{m,k(m)}\leq1$.  Then
$f_m=g_{m,k(m)}^{\circ}$ satisfies
\eqref{tetra_L4}--\eqref{tetra_D4}, and every partition cell has
measure at most one.
\qedhere

\end{proof}

The next elementary estimate controls the fourth moments appearing
under the hybrid laws in a Lindeberg replacement.

\begin{lemma}
\label{lem_hyb_4}
Let $R$ be a multilinear polynomial of degree at most $r$ in mutually
independent centered coordinates, each of which is either Gaussian or
centered Poisson.  On a product extension, replace every Gaussian
coordinate by a centered Poisson coordinate with the same variance,
independent of all original coordinates and of the other replacement
coordinates.  Denote the resulting polynomial by $R^\eta$.  Then
\noi
\begin{align}
 \E[|R|^4]\leq3^{2r}\E[|R^{\eta}|^4].
 \label{hyb_4}
\end{align}
\end{lemma}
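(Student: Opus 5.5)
The idea is to avoid a coordinate-by-coordinate comparison and instead condition on the centered Poisson coordinates already present in $R$. Conditionally on them, $R$ is a polynomial in the Gaussian coordinates only, so Gaussian hypercontractivity \eqref{Gau_hyper} applies. The conditional second moment is the same on the Gaussian and Poisson sides, by independence and equality of variances. Jensen's inequality then passes from the second to the fourth moment on the Poisson side. A direct one-coordinate replacement does not work here: expanding $\E[(A+XB)^4]$ produces a Poisson third-moment cross term $4\E[Y^3]\E[AB^3]$ of indefinite sign, and the number of Gaussian coordinates may far exceed $r$.

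\textbf{Step 1 (structure).} Write the coordinates as $X_1,\dots,X_a$ (Gaussian, with variances $s_1,\dots,s_a$) and $P_1,\dots,P_b$ (centered Poisson). By multilinearity,
\[
 R=\sum_{S\subseteq\{1,\dots,a\}}\Big(\prod_{i\in S}X_i\Big)\,c_S(P),
\]
where each $c_S(P)$ is a multilinear polynomial in the $P_j$ and $c_S\equiv0$ unless $|S|\leq r$. Let $Y_1,\dots,Y_a$ be the independent centered Poisson replacements with $\E[Y_i^2]=s_i$. Then $R^\eta=\sum_S\prod_{i\in S}Y_i\,c_S(P)$. All moments are finite because these are polynomials in variables with all moments. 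By independence we may compute $\E$ iteratively as $\E_P\E_X$ and $\E_P\E_Y$.

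\textbf{Step 2 (Gaussian conditional bound).} Fix the values of $P$. Each $\prod_{i\in S}X_i$ with distinct indices is a multiple Wiener integral of order $|S|$ for the isonormal process spanned by the independent $X_i$. Hence, conditionally on $P$, $R$ lies in $\bigoplus_{k=0}^r\cC_k^W$. Then \eqref{Gau_hyper} with $p=4$ gives $\|H\|_4\leq3^{r/2}\|H\|_2$, that is,
\[
 \E_X[R^4]\leq3^{2r}\big(\E_X[R^2]\big)^2 .
\]

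\textbf{Step 3 (matching second moments and Jensen).} Orthogonality of distinct multilinear monomials in independent centered variables gives
\[
 \E_X[R^2]=\sum_S\Big(\prod_{i\in S}s_i\Big)c_S(P)^2=\E_Y[(R^\eta)^2].
\]
The same identity holds for the $Y_i$, since only their variances enter. Jensen's inequality then yields $(\E_Y[(R^\eta)^2])^2\leq\E_Y[(R^\eta)^4]$. Combining with Step 2 and taking $\E_P$,
\[
 \E[R^4]=\E_P\E_X[R^4]\leq3^{2r}\E_P\big(\E_Y[(R^\eta)^2]\big)^2\leq3^{2r}\E_P\E_Y[(R^\eta)^4]=3^{2r}\E[(R^\eta)^4],
\]
which is \eqref{hyb_4}.

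\textbf{Main obstacle.} The only delicate point is the justification in Step 2. We must check that, conditionally on $P$, $R$ really sits in Gaussian chaoses of order at most $r$, which uses multilinearity (no repeated Gaussian factors). We must also check that the hypercontractivity constant does not depend on the number of Gaussian coordinates; \eqref{Gau_hyper} gives exactly this, since its constant depends only on the chaos order.
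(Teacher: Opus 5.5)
Your proof is correct and follows essentially the same route as the paper's: condition on the original Poisson coordinates, apply Gaussian hypercontractivity \eqref{Gau_hyper} conditionally, identify the conditional second moments of $R$ and $R^\eta$ through the coordinate variances, and finish with conditional Jensen before taking expectations. Your Steps 1--3 only write out the multilinear expansion and the orthogonality computation that the paper states in one line.
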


\begin{proof}
Let $\mathcal P$ be the $\s$-algebra generated by the original
Poisson coordinates.  Conditionally on $\mathcal P$, the random
variable $R$ is a polynomial of degree at most $r$ in independent
Gaussian coordinates.  We deduce from Gaussian hypercontractivity
\eqref{Gau_hyper} that
$\E[R^4 | \mathcal P]\leq3^{2r}
\bigl(\E[R^2 | \mathcal P]\bigr)^2$.
By multilinearity and independence, the conditional second moment
depends on the remaining coordinates only through their variances.
Hence, after taking conditional expectation over the replacement
coordinates,
$\E[R^2 | \mathcal P]=\E[(R^\eta)^2 | \mathcal P]$.
Conditional Jensen's inequality gives
$\bigl(\E[(R^\eta)^2 | \mathcal P]\bigr)^2\leq
\E[(R^\eta)^4 | \mathcal P]$.  Taking expectations proves
\eqref{hyb_4}.
\end{proof}

We are now ready to prove Theorem~\ref{thm4}.

\begin{proof}[Proof of Theorem~\ref{thm4}]

We first prove \eqref{thm5_1}.  By Lemma~\ref{lem_tetra_core},
\eqref{tetra_L4}, \eqref{tetra_D4}, and the reverse triangle
inequality for $\Jfour^{1/4}$ used in \eqref{core_J4}, it suffices to
consider a tetrahedral step kernel relative to pairwise disjoint cells
$A_1,\ldots,A_N$ satisfying $\lambda_i:=\mu(A_i)\leq1$.
Set

\noi
\begin{align*}
 X_i:=\eta(A_i)-\lambda_i
 \AND
 Y_i:=W(A_i).
\end{align*}
Then the two families $(X_i)_{i=1}^N$ and $(Y_i)_{i=1}^N$ are
mutually independent, with $\E[X_i]=\E[Y_i]=0$ and
$\E[X_i^2]=\E[Y_i^2]=\lambda_i$.
Since the kernel is tetrahedral, there is a homogeneous multilinear
polynomial $Q$ of degree $q$ such that
$ F=Q(X_1,\ldots,X_N)$ and $G=Q(Y_1,\ldots,Y_N)$.

Next, we use a Lindeberg replacement argument.  For $i=0,\ldots,N$, define
the hybrid vector
\noi
\begin{align*}
 Z^{(i)}=(Y_1,\ldots,Y_i,X_{i+1},\ldots,X_N).
\end{align*}
Thus, $Z^{(0)}=(X_1,\ldots,X_N)$ and
$Z^{(N)}=(Y_1,\ldots,Y_N)$, and therefore
\noi
\begin{align}
 \E[h(F)]-\E[h(G)]
 =
 \sum_{i=1}^N
 \left\{
 \E[h(Q(Z^{(i-1)}))]
 -
 \E[h(Q(Z^{(i)}))]
 \right\}.
 \label{Lind_tel}
\end{align}

Since $Q$ is multilinear, for each $i$ we may write
\noi
\begin{align*}
 Q(x_1,\ldots,x_N)
 =
 U_i(x_1,\ldots,\widehat{x_i},\ldots,x_N)
 +
 x_iV_i(x_1,\ldots,\widehat{x_i},\ldots,x_N),
\end{align*}
where $U_i$ and $V_i$ do not depend on the $i$th coordinate.
Evaluate $U_i$ and $V_i$ at
\noi
\begin{align*}
 (Y_1,\ldots,Y_{i-1},X_{i+1},\ldots,X_N)
\end{align*}
and denote the resulting random variables by
$U_i^{\rm hyb}$ and $V_i^{\rm hyb}$.  We may therefore write

\noi
\begin{align*}
 Q(Z^{(i-1)})
 =
 U_i^{\rm hyb}+X_iV_i^{\rm hyb}
 \AND
 Q(Z^{(i)})
 =
 U_i^{\rm hyb}+Y_iV_i^{\rm hyb}.
\end{align*}
Moreover, $(U_i^{\rm hyb},V_i^{\rm hyb})$ is independent of both
$X_i$ and $Y_i$.  Taylor's formula gives, for every $x\in\R$,

\noi
\begin{align*}
 h(U_i^{\rm hyb}+xV_i^{\rm hyb})
 &=
 h(U_i^{\rm hyb})
 +h'(U_i^{\rm hyb})xV_i^{\rm hyb}
 +\frac12h''(U_i^{\rm hyb})x^2
 (V_i^{\rm hyb})^2
 +R_i(x),
\end{align*}
where
$|R_i(x)|\leq\frac16\|h^{(3)}\|_\infty
|x|^3|V_i^{\rm hyb}|^3$.
Since $X_i$ and $Y_i$ have matching first two moments, the constant,
linear, and quadratic terms cancel after taking expectations
so that

\noi
\begin{align}
 &\left|
 \E h(U_i^{\rm hyb}+X_iV_i^{\rm hyb})
 -
 \E h(U_i^{\rm hyb}+Y_iV_i^{\rm hyb})
 \right|
\leq
 \frac{1}{6} \| h^{(3)}\|_\infty
 \E[|V_i^{\rm hyb}|^3]
 \left(
 \E|X_i|^3+\E|Y_i|^3
 \right).
 \label{Lind_step}
\end{align}

A direct computation, using $\lambda_i\leq1$, gives

\noi
 \begin{align} \label{coord_third}
\max\{\E[|X_i|^3],\E[|Y_i|^3]\}\leq2\lambda_i.
\end{align}

Let $V_i^\eta$ be the polynomial $V_i$ evaluated at the all-Poisson
vector obtained from $(X_1,\ldots,X_N)$ by deleting the $i$th
coordinate.  Since $V_i$ has degree at most $q-1$, Lemma~\ref{lem_hyb_4}
and equality of the coordinate variances give
\noi
\begin{align}
 \E[(V_i^{\rm hyb})^4]
 &\leq3^{2(q-1)}\E[(V_i^\eta)^4],
 \quad
 \E[(V_i^{\rm hyb})^2]=\E[(V_i^\eta)^2].
 \label{Vi_hybrid}
\end{align}

Since $F=Q(X_1,\ldots,X_N)$ is multilinear,
\noi
\begin{align*}
 D_zF=\sum_{i=1}^N\ind_{A_i}(z)V_i^\eta.
\end{align*}
We deduce from \eqref{D_L2} with $r=1$ and from the definition
\eqref{J4F}, respectively, that
\noi
\begin{align}
 \sum_{i=1}^N\lambda_i\E[(V_i^\eta)^2]
 =q\sigma^2
 \AND
 \sum_{i=1}^N\lambda_i\E[(V_i^\eta)^4]
 =\Jfour(F).
 \label{infl_fourth}
\end{align}

By Cauchy--Schwarz, \eqref{Vi_hybrid}, and \eqref{infl_fourth},
we obtain

\noi
\begin{align*}
 \sum_{i=1}^N
 \lambda_i\E[|V_i^{\rm hyb}|^3]
 &\leq
 \left(
 \sum_{i=1}^N
 \lambda_i\E[(V_i^{\rm hyb})^2]
 \right)^{1/2}
 \left(
 \sum_{i=1}^N
 \lambda_i\E[(V_i^{\rm hyb})^4]
 \right)^{1/2}\\
 &\leq
 3^{q-1}
 \sqrt{q\sigma^2\Jfour(F)}.
\end{align*}
Combining this with
\eqref{Lind_tel}, \eqref{Lind_step}, and
\eqref{coord_third}, we obtain
\noi
\begin{align*}
 |\E[h(F)]-\E[h(G)]|
 &\leq
 3^{q-1}\sqrt q\,
 \|h^{(3)}\|_\infty
 \sqrt{\sigma^2\Jfour(F)}.
\end{align*}
This proves \eqref{thm5_1} with $K_q=3^{q-1}\sqrt q$.

\medskip

We next prove \eqref{thm5_2}.  Let $N\sim\NN(0,1)$ and, for a
one-Lipschitz function $h$, define
$h_\eps(x)=\E[h(x+\eps N)]$.  Differentiating under the expectation
and using $\E|N|\leq1$ and $\E|N^2-1|\leq2$, we obtain
$\|h-h_\eps\|_\infty\leq\eps$ and
$\|h_\eps^{(3)}\|_\infty\leq2\eps^{-2}$.  Applying
\eqref{thm5_1} to $h_\eps$, with
$A=\sqrt{\sigma^2\Jfour(F)}$, gives
\[
 |\E[h(F)]-\E[h(G)]|
 \leq2\eps+2K_qA\eps^{-2}.
\]
The conclusion is immediate when $A=0$.  Otherwise, the choice
$\eps=(K_qA)^{1/3}$ yields
$|\E[h(F)]-\E[h(G)]|\leq4K_q^{1/3}A^{1/3}$.  Taking the supremum
over all one-Lipschitz $h$ proves \eqref{thm5_2} with
$L_q=4K_q^{1/3}$.
\qedhere

\end{proof}

\subsection{Why the smooth-test order does not extend to Wasserstein distance}
\label{SEC_50b}
\begin{proof}[Proof of Proposition~\ref{prop17}]
We first prove part~\textup{(a)}.  Put
$a=\lambda^{1/q}$ and choose pairwise disjoint sets
$A_1,\ldots,A_q$ of measure $a$.  Let
\noi
\begin{align*}
 F_\lambda
 =\prod_{k=1}^q\{\eta(A_k)-a\}
 \AND
 G_\lambda
 =\prod_{k=1}^qW(A_k).
\end{align*}
By \eqref{disjoint_prod}, these are the Poisson and Gaussian multiple
integrals of the same symmetric kernel.
The second-moment identity \eqref{prop17_1} follows from independence.
Moreover,
\noi
\begin{align*}
 D_zF_\lambda
 =\sum_{k=1}^q\ind_{A_k}(z) \prod_{j\neq k}\{\eta(A_j)-a\}.
\end{align*}
We deduce \eqref{prop17_2} from the fact that
the fourth centered moment of $\Pois(a)$ is
$a+3a^2$.

For every $h\in\cH_2$, we have
$|h(x)-h(0)-h'(0)x|\leq x^2/2$.
Since $F_\lambda$ and $G_\lambda$ are centered
with common
variance $\lambda$,

\noi
\begin{align}\label{d2_eq_u}
 d_2(F_\lambda,G_\lambda)\leq\lambda.
\end{align}
For the reverse bound, let
\noi
\begin{align*}
 h_0(x)=
 \begin{cases}
 0,&x\leq0,\\
 x^2/2,&0<x<1,\\
 x-1/2,&x\geq1.
 \end{cases}
\end{align*}
Then $h_0\in\cH_2$.
With $a= \lambda^{1/q}$ and

\noi
\begin{align*}
 B_a=\bigcap_{k=1}^q\{\eta(A_k)\geq1\},
\end{align*}
independence gives
\noi
\begin{align*}
 \PP(B_a)&=(1-e^{-a})^q, \quad
 \E[F_\lambda\ind_{B_a}]=(ae^{-a})^q, \quad
 \E[F_\lambda^2\ind_{B_a}]=(a-a^2e^{-a})^q.
\end{align*}
Since $a^q=\lambda$, these identities yield
$\PP(B_a)=\lambda+O_q(\lambda a)$,
$\E[F_\lambda^2\ind_{B_a^c}]=O_q(\lambda a)$, and
$\E[(F_\lambda-1)^2\ind_{B_a}]=O_q(\lambda a)$.  In particular,
Cauchy--Schwarz gives
$\E[|F_\lambda-1|\ind_{B_a}]=o(\lambda)$.
Since $h_0$ is one-Lipschitz and $h_0(1)=1/2$,
$h_0(F_\lambda)-1/2\geq-|F_\lambda-1|$, and hence

\noi
\begin{align*}
 \E[h_0(F_\lambda)]
 &\geq\E[h_0(F_\lambda)\ind_{B_a}]
 \geq\frac12\PP(B_a)-\E[|F_\lambda-1|\ind_{B_a}] \\
& \geq\frac{\lambda}{2}-o(\lambda).
\end{align*}
On the other hand,
\[
 G_\lambda=\sqrt{\lambda}\,V_q,
\]
where $V_q$ is the product of $q$ independent standard normal
variables.  By the definition of $h_0$,
\[
 0\leq
 \frac{x_+^2}{2}-h_0(x)
 =
 \frac{(x-1)^2}{2}\ind_{\{x\geq1\}}
 \leq
 \frac{x^2}{2}\ind_{\{x\geq1\}}.
\]
\[
\begin{aligned}
 0
 \leq
 \frac12\E[(G_\lambda)_+^2]
 -\E[h_0(G_\lambda)]
 &\leq \frac12 \E[G_\lambda^2\ind_{\{G_\lambda\geq1\}}]\\
 &=
 \frac{\lambda}{2}
 \E\left[
 V_q^2
 \ind_{\{V_q\geq\lambda^{-1/2}\}}
 \right]
 =
 o(\lambda),
\end{aligned}
\]
where the last step follows from $V_q^2\in L^1(\Omega)$ and dominated
convergence.  Since $V_q$ is symmetric,
\[
 \E[(G_\lambda)_+^2]
 =
 \frac12\E[G_\lambda^2]
 =
 \frac{\lambda}{2}.
\]
Therefore
\[
 \E[h_0(G_\lambda)]
 =
 \frac14\E[G_\lambda^2]+o(\lambda)
 =
 \frac{\lambda}{4}+o(\lambda).
\]

Therefore,
$d_2(F_\lambda,G_\lambda)\geq\frac{\lambda}{5}$
for all sufficiently small $\lambda$,
which together with \eqref{d2_eq_u} proves
\eqref{prop17_3}.

Finally, the one-Lipschitz test $h(x)=|x|$ gives

\noi
\begin{align*}
 \dW(F_\lambda,G_\lambda)
 \geq\E[|G_\lambda|]-\E[|F_\lambda|]
 \geq
 \left(\frac2\pi\right)^{q/2}\sqrt{\lambda}
 -2^q\lambda e^{-qa},
\end{align*}
where we used $\E|\eta(A_k)-a|=2ae^{-a}$
for $0<a<1$.
The reverse estimate
$ \dW(F_\lambda,G_\lambda)
 \leq\E|F_\lambda|+\E|G_\lambda|$
is of order $\sqrt{\lambda}$.  This proves
\eqref{prop17_4} and thus completes the proof of part~\textup{(a)}.

\medskip

We now prove part~\textup{(b)}.  We combine a diffuse approximation
of $N^2-1$ with a small second-chaos perturbation.

Choose $B_m$ with $\mu(B_m)=m$ and set
\noi
\begin{align*}
 g_m=m^{-1/2}\ind_{B_m},
 \quad
 \bar F_m=I_2^\eta(g_m^{\ot2}),
 \AND
 \bar G_m=I_2^W(g_m^{\ot2}).
\end{align*}
If
\[
 X_m=\frac{\eta(B_m)-m}{\sqrt m},
\]
then we deduce from the product formula that

\noi
\begin{align*}
 \bar F_m
 =X_m^2-\frac{X_m}{\sqrt m}-1
\AND
 \bar G_m\stackrel{\mathrm d}=Y:=N^2-1,
\end{align*}
where $N\sim\NN(0,1)$.  Moreover,
$\E[\bar F_m^2]=\E[\bar G_m^2]=2$, and
$\bar F_m\xrightarrow{\mathrm d}Y$ as $m\to\infty$.

Since the second moments are uniformly bounded, the first absolute
moments are uniformly integrable.  The convergence criterion in
Section~\ref{SEC_dist} therefore gives
\noi
\begin{align}
 \dW(\bar F_m,Y)\longrightarrow0.
 \label{metric_base_W}
\end{align}
Moreover,
with
$D_z\bar F_m=2g_m(z)X_m$,
we have
\noi
\begin{align}
 \Jfour(\bar F_m)
 =
 \frac{16}{m}\left(3+\frac1m\right)
 \longrightarrow0.
 \label{metric_base_J4}
\end{align}

We next construct a fixed perturbation.  Fix $0<a<1$, choose
disjoint sets $A_1,A_2$, and choose the sets $B_m$ above disjoint from
$A_1\cup A_2$, with
$\mu(A_1)=\mu(A_2)=a$.  Put
\noi
\begin{align*}
 U_a
 =
 \frac{(\eta(A_1)-a)(\eta(A_2)-a)}{a}
 \AND
 V_a
 =
 \frac{W(A_1)W(A_2)}{a}.
\end{align*}
They are Poisson and Gaussian double integrals of the same kernel
$k_a =
 \frac1a\sym(\ind_{A_1}\ot\ind_{A_2})
 $,
such that $\E[U_a^2]=\E[V_a^2]=1$
and
$\Jfour(U_a)=2(1+3a)/a^2$.

We choose $a$ so that $U_a$ and $V_a$ can be distinguished near the
left endpoint $-1$ of the support of $Y$.  Since $0<a<1$, $U_a<0$
precisely when exactly one of the two Poisson counts vanishes, and
\noi
\begin{align*}
 \E[(-U_a)_+^{3/2}]
 =
 2e^{-a}
 \E\left[
 (\eta(A_1)-a)^{3/2}
 \ind_{\{\eta(A_1)\geq1\}}
 \right]
 \longrightarrow0
\end{align*}
as $a\downarrow0$.  On the other hand,
$\E[(-V_a)_+^{3/2}]
=\frac12(\E|N|^{3/2})^2>0$.
We may therefore fix $a>0$ sufficiently small that
\noi
\begin{align}
 \E[(-U_a)_+^{3/2}]
 \neq
 \E[(-V_a)_+^{3/2}].
 \label{metric_moment_sep}
\end{align}

To exploit this difference, take the one-Lipschitz function
$ h(x)=(-1-x)_+$
and define, for $t\geq0$,
$
 \Psi(t)=\E[(t-N^2)_+].
$
Writing $\phi(x)=(2\pi)^{-1/2}e^{-x^2/2}$, we have
\noi
\begin{align*}
 \Psi(t)=2\int_0^{\sqrt t}(t-x^2)\phi(x)\,\dd x
 =\frac{4}{3\sqrt{2\pi}}t^{3/2}+o(t^{3/2}),
 \quad t\downarrow0.
\end{align*}
Moreover, $\Psi(t)\leq Ct^{3/2}$ for all $t\geq0$: use the
boundedness of $\phi$ for $0\leq t\leq1$ and the bound
$\Psi(t)\leq t$ for $t\geq1$.  If $R$ is independent of $N$ and
$\E|R|^{3/2}<\infty$, then conditioning on $R$ gives
\noi
\begin{align*}
 \E[h(Y+\eps R)]
 =\E\bigl[\Psi\bigl(\eps(-R)_+\bigr)\bigr].
\end{align*}
Hence dominated convergence yields
\noi
\begin{align*}
 \eps^{-3/2}\E[h(Y+\eps R)]
 \xrightarrow{\eps\to0}
 \frac{4}{3\sqrt{2\pi}}
 \E[(-R)_+^{3/2}].
\end{align*}
Applying this to $R=U_a$ and $R=V_a$, and using
\eqref{metric_moment_sep}, we obtain some $c_a>0$ such that
\noi
\begin{align*}
 \dW(Y+\eps U_a,Y+\eps V_a)
 \geq c_a\eps^{3/2}
\end{align*}
for all sufficiently small $\eps$.

Choose $m=m(\eps)\to\infty$ sufficiently fast that
\noi
\begin{align*}
 \dW(\bar F_{m(\eps)},Y)
 =o(\eps^{3/2})
 \AND
 \Jfour(\bar F_{m(\eps)})
 =o(\eps^4),
\end{align*}
which is possible by
\eqref{metric_base_W}--\eqref{metric_base_J4}.  Define
\noi
\begin{align*}
 \widetilde F_\eps
 =
 \bar F_{m(\eps)}+\eps U_a
 \AND
 \widetilde G_\eps
 =
 \bar G_{m(\eps)}+\eps V_a.
\end{align*}
Since the two components have disjoint supports,
$\widetilde F_\eps$ and $\widetilde G_\eps$ are same-kernel double
integrals, with common kernel
$
 g_{m(\eps)}^{\ot2}+\eps k_a.
$
Moreover,
$
 \E[\widetilde F_\eps^2]
 =\E[\widetilde G_\eps^2]
 =2+\eps^2
$
and
$
 \Jfour(\widetilde F_\eps)
 =
 \Jfour(\bar F_{m(\eps)})
 +\eps^4\Jfour(U_a)
 \asymp\eps^4.
$

By the triangle inequality and the convolution contraction
\eqref{dist_conv},
\noi
\begin{align*}
 \dW(\widetilde F_\eps,\widetilde G_\eps)
 &\geq
 \dW(Y+\eps U_a,Y+\eps V_a)
 -\dW(\bar F_{m(\eps)}+\eps U_a,Y+\eps U_a)\\
 &\geq
 \dW(Y+\eps U_a,Y+\eps V_a)
 -\dW(\bar F_{m(\eps)},Y)
 \gtrsim\eps^{3/2}.
\end{align*}

Finally, set
\noi
\begin{align*}
 r_\eps
 =
 \sqrt{\frac{2}{2+\eps^2}},
 \quad
 F_\eps=r_\eps\widetilde F_\eps
 \AND
 G_\eps=r_\eps\widetilde G_\eps.
\end{align*}
Then $F_\eps$ and $G_\eps$ are still same-kernel double integrals.
Using the homogeneity of $\dW$ and $\Jfour$, and the fact that
$r_\eps\to1$, we obtain
\noi
\begin{align*}
 \E[F_\eps^2]=\E[G_\eps^2]=2,
 \quad
 \Jfour(F_\eps)\asymp\eps^4,
 \AND
 \dW(F_\eps,G_\eps)\gtrsim\eps^{3/2}.
\end{align*}
For any sequence $\eps_n\downarrow0$, set
$F_n=F_{\eps_n}$ and $G_n=G_{\eps_n}$.  Then
\noi
\begin{align*}
 \frac{\dW(F_n,G_n)}
 {\sqrt{\E[F_n^2]\Jfour(F_n)}}
 \gtrsim
 \eps_n^{-1/2}
 \longrightarrow\infty,
\end{align*}
which proves part~\textup{(b)}.

This completes the proof of Proposition~\ref{prop17}.
\end{proof}

For the moment-transfer and parity arguments, the product formulas
\eqref{Poi_prod_full} and \eqref{Gau_prod_full} show that the Gaussian
square contains only the fully integrated contractions
$f\star_r^r f$, whereas the Poisson square also contains
$f\star_r^\ell f$ with $\ell<r$.  The next two subsections show that
$\Jfour(F)$ controls every such Poisson-specific contraction.

\subsection{Propagation of the fourth add-one energy}
\label{SEC_51}

For $1\leq t\leq q$, define
\noi
\begin{align*}
 \mathcal J_{4,t}(F)
 =\int_{\cZ^t}
 \E\left[\left|D^t_{z_1,\ldots,z_t}F\right|^4\right]
 \mu^t(\dd z_1\cdots\dd z_t).
\end{align*}
Thus $\mathcal J_{4,1}(F)=\Jfour(F)$.  Put
\noi
\begin{align*}
 A_{q,1}=1,
 \quad
 A_{q,t}=\prod_{j=1}^{t-1}\bigl(4(q-j)-3\bigr),
 \AND 2\leq t\leq q.
\end{align*}

\begin{lemma}
\label{lem_J4_prop}
For every $1\leq t\leq q$,

\noi
\begin{align}
 \mathcal J_{4,t}(F)
 \leq
 A_{q,t}\Jfour(F). \label{J4_prop}
\end{align}

\end{lemma}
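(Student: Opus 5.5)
We argue by induction on $t$, using at each step the fourth-cumulant bound from Proposition~\ref{prop_core}, but applied to a chaos element of \emph{lower} order. The case $t=1$ holds with equality, since $A_{q,1}=1$.

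For the inductive step, fix $\boldsymbol z\in\cZ^{t-1}$ and set $H_{\boldsymbol z}=D^{t-1}_{\boldsymbol z}F$. By \eqref{iter_D}, $H_{\boldsymbol z}$ lies in the chaos of order $q-t+1$, and $D_wH_{\boldsymbol z}=D^t_{\boldsymbol z,w}F$. Assuming for now that $H_{\boldsymbol z}\in L^4(\Omega)$, the bound $\Jfour(H)\le(4p-3)\cum_4(H)$ with $p=q-t+1$ gives
\begin{align*}
\int_\cZ\E[|D^t_{\boldsymbol z,w}F|^4]\,\mu(\dd w)
\le\bigl(4(q-t+1)-3\bigr)\,\cum_4(H_{\boldsymbol z})
\le\bigl(4(q-t+1)-3\bigr)\,\E[H_{\boldsymbol z}^4].
\end{align*}
The second inequality uses $\cum_4(H)=\E[H^4]-3\E[H^2]^2\le\E[H^4]$. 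Integrating over $\boldsymbol z$ yields
\[
\mathcal J_{4,t}(F)\le\bigl(4(q-(t-1))-3\bigr)\,\mathcal J_{4,t-1}(F).
\]
Iterating this down to $\mathcal J_{4,1}(F)=\Jfour(F)$ produces the factors $4(q-j)-3$ for $j=1,\ldots,t-1$, which is exactly $A_{q,t}$. Note that the factor $4q-3$ for $j=0$ does not appear, because we stop at $\Jfour$ rather than at $\cum_4(F)$. This is consistent with the displayed bound in Proposition~\ref{prop_core}.

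The main obstacle is integrability. We need $H_{\boldsymbol z}\in L^4(\Omega)$ for $\mu^{t-1}$-a.e.\ $\boldsymbol z$, and measurability of $\boldsymbol z\mapsto\E[H_{\boldsymbol z}^4]$.

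\begin{itemize}
\item If $\mathcal J_{4,t-1}(F)<\infty$, then Fubini gives $H_{\boldsymbol z}\in L^4(\Omega)$ for a.e.\ $\boldsymbol z$, which is all the inductive step needs. When $\Jfour(F)=\infty$ the claim is trivial, and $F\in L^4$ already gives finiteness by Proposition~\ref{prop_core}.
\item To avoid measurability issues, I would first prove the inequality for the martingale approximants $F_n\in\cC_q^\bigstar$. These have bounded step kernels of finite-measure support, so every $D^{t-1}_{\boldsymbol z}F_n$ is a bounded-kernel chaos element with all moments finite, and all the functions involved are simple.
\item The inequality then passes to the limit via \eqref{core_graph}: it controls $\int\E|D^r(F_n-F)|^4$ for every $r\le q$, so both $\mathcal J_{4,t}(F_n)\to\mathcal J_{4,t}(F)$ and $\Jfour(F_n)\to\Jfour(F)$, by the reverse triangle inequality in $L^4(\PP\otimes\mu^t)$.
\end{itemize}

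If one prefers to avoid approximation altogether, the a.e.\ finiteness from Fubini suffices directly. The remaining care is only that the cumulant bound for the order-$(q-t+1)$ element $H_{\boldsymbol z}$ is applied pointwise in $\boldsymbol z$.
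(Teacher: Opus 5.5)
Your proof is correct and follows the paper's route. The paper likewise reduces the lemma to the one-step recursion $\mathcal J_{4,t}(F)\le\bigl(4(q-t+1)-3\bigr)\mathcal J_{4,t-1}(F)$, citing the proof of Theorem~1.8 in \cite{Zhe26a}, and iterates it down to $\mathcal J_{4,1}(F)=\Jfour(F)$. Your derivation of that recursion is exactly the content the paper leaves to the citation: you apply $\Jfour(H)\le(4p-3)\cum_4(H)$ to $H_{\boldsymbol z}=D^{t-1}_{\boldsymbol z}F$ in chaos order $p=q-t+1$, drop the term $-3\E[H^2]^2$, and use Fubini with the inductive finiteness of $\mathcal J_{4,t-1}(F)$ to get $H_{\boldsymbol z}\in L^4(\Omega)$ for $\mu^{t-1}$-a.e.\ $\boldsymbol z$.
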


\begin{proof}
This is the recursive derivative estimate proved in
\cite[Proof of Theorem~1.8]{Zhe26a}.  Indeed,
we have
$ \mathcal J_{4,t}(F)
 \leq
 \bigl(4(q-t+1)-3\bigr)
 \mathcal J_{4,t-1}(F)$
 for $2\leq t\leq q$.
Iterating this inequality and using
$\mathcal J_{4,1}(F)=\Jfour(F)$ yields \eqref{J4_prop}.
\end{proof}

\subsection{Control of the Poisson-specific contractions}
\label{SEC_52}

The following elementary consequence of the iterated-derivative
estimate will be used below.  It reverses, at the level needed here, the
usual implication from vanishing Poisson contractions to vanishing
fourth add-one energy; see \cite[Remark~1.8(b)]{DP18b} and
\cite[Lemma~4.1]{PT13}.

\begin{lemma}
\label{lem_offdiag}
For every $0\leq\ell<r\leq q$,
\noi
\begin{align}
 \big\| f\star_r^\ell f \big\|_{L^2(\mu^{2q-r-\ell})}
 \leq
 \frac{(q-r+\ell)!}{(q!)^2}
 \sqrt{A_{q,r-\ell}}\sqrt{\Jfour(F)}.
 \label{offdiag_J4}
\end{align}
\end{lemma}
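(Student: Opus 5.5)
This is a direct combination of the contraction-to-derivative estimate \eqref{star_D} from Section~\ref{SEC_21} with the propagation estimate of Lemma~\ref{lem_J4_prop}. Fix $0\leq\ell<r\leq q$ and put $t=r-\ell$, so that $1\leq t\leq q$.

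First, I apply \eqref{star_D}. It was derived from the section identity \eqref{star_section}, the Cauchy--Schwarz bound $\|f_{\boldsymbol z}\otimes_\ell f_{\boldsymbol z}\|_2\leq\|f_{\boldsymbol z}\|_2^2$, the representation $\|f_{\boldsymbol z}\|_2^2=\frac{(q-t)!}{(q!)^2}\E[(D^t_{\boldsymbol z}F)^2]$ coming from \eqref{iter_D} and \eqref{iso}, and Jensen's inequality $\E[(D^t_{\boldsymbol z}F)^2]^2\leq\E[|D^t_{\boldsymbol z}F|^4]$. It gives
\[
 \|f\star_r^\ell f\|_2\leq\frac{(q-r+\ell)!}{(q!)^2}\,\mathcal J_{4,r-\ell}(F)^{1/2},
\]
where the integral appearing in \eqref{star_D} is exactly $\mathcal J_{4,t}(F)$ by definition. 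These inequalities hold with possibly infinite right-hand sides, so no regularity of $f$ is needed at this stage.

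Second, I insert Lemma~\ref{lem_J4_prop} with $t=r-\ell$. This gives $\mathcal J_{4,r-\ell}(F)\leq A_{q,r-\ell}\Jfour(F)$, and taking square roots yields \eqref{offdiag_J4}. Since $F\in L^4(\Omega)$, Proposition~\ref{prop_core} guarantees that $\Jfour(F)<\infty$, so the right-hand side is finite. In particular, the contraction $f\star_r^\ell f$ is a well-defined element of $L^2(\mu^{2q-r-\ell})$.

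I expect no real obstacle, because the recursive derivative estimate behind Lemma~\ref{lem_J4_prop} is imported from \cite{Zhe26a}. The only care needed is bookkeeping. One must match the normalization $(q-t)!/(q!)^2$ in \eqref{star_D} with $t=r-\ell$. One must also check that the dimension $2q-r-\ell$ of the contraction agrees with the variables $(\mathbf x,\mathbf y,\mathbf z)$ in the definition of $\star_r^\ell$, which have $(q-r)+(q-r)+(r-\ell)$ coordinates. Finally, one should note that the case $t=1$ uses $A_{q,1}=1$ and $\mathcal J_{4,1}=\Jfour$.
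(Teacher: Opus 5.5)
Your proposal is correct and follows the paper's proof exactly: set $t=r-\ell$, use \eqref{star_D} to bound $\|f\star_r^\ell f\|_2$ by $\frac{(q-t)!}{(q!)^2}\mathcal J_{4,t}(F)^{1/2}$, and then insert Lemma~\ref{lem_J4_prop}. Your bookkeeping checks on the normalization, the number of variables $2q-r-\ell$, and the case $t=1$ are also correct.
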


\begin{proof}
Fix $0\leq\ell<r\leq q$ and put $t=r-\ell\geq1$.  We deduce from
\eqref{star_D} and \eqref{J4_prop} that
\[
 \|f\star_r^\ell f\|_2
 \leq\frac{(q-t)!}{(q!)^2}\mathcal J_{4,t}(F)^{1/2}
 \leq\frac{(q-t)!}{(q!)^2}
 \sqrt{A_{q,t}}\sqrt{\Jfour(F)}.
\]
This proves \eqref{offdiag_J4}.
\end{proof}

\begin{remark}\rm
\label{rem_offdiag_vanish}
For a sequence $F_n=I_q^\eta(f_n)$ with $f_n$ symmetric,
we have the following implication:

\noi
\begin{align*}
 \Jfour(F_n)\longrightarrow0
 \quad\Longrightarrow\quad
 \big\|f_n\star_r^\ell f_n\big\|_{L^2(\mu^{2q-r-\ell})}
 \longrightarrow0,
 \quad 0\leq\ell<r\leq q.
\end{align*}

Thus, the add-one Lindeberg condition forces every contraction term with
no analogue in the Gaussian product formula \eqref{Gau_prod_full} to vanish.
\end{remark}

\subsection{Proof of Proposition~\ref{prop18}: third moments}
\label{SEC_53}
Define
\noi
\begin{align}
 B_q
 =\frac1{(q!)^{3/2}}
 \sum_{r=\lfloor q/2\rfloor+1}^{q}
 r!\binom qr^2\binom r{q-r}
 (2q-2r)!
 \sqrt{A_{q,2r-q}}.
 \label{Bq_def}
\end{align}

\begin{proof}[Proof of \eqref{prop18_1}]
We deduce from Lemma~\ref{lem_offdiag} that every contraction
$f\star_r^\ell f$ with $\ell<r$ is square integrable.  The fully
integrated contractions $f\star_r^r f=f\otimes_r f$ are square
integrable directly by the Hilbert--Schmidt contraction inequality
$\|f\otimes_r f\|_2\leq\|f\|_2^2$.  Hence all terms required by
the product formula \eqref{Poi_prod_full} are in $L^2$, and we deduce
from that formula that
\noi
\begin{align}
 F^2
 =\sum_{r=0}^q r!\binom qr^2
 \sum_{\ell=0}^r\binom r\ell
 I_{2q-r-\ell}^\eta
 \left(f \wt{\star_r^\ell} f\right).
 \label{F2_Poi_full}
\end{align}
Now
$\E[F^3]=\inner{F}{F^2}_{L^2(\PP)}$.  Since
$F\in\cC_q^\eta$, a term in \eqref{F2_Poi_full} contributes, by
orthogonality of distinct Poisson chaoses, if and only if
\noi
\begin{align*}
 2q-r-\ell=q,
 \quad\text{equivalently}\quad r+\ell=q.
\end{align*}
Thus $\ell=q-r$, which is admissible exactly when
$r\geq\lceil q/2\rceil$.  Applying the order-$q$ isometry and using
that $f$ is symmetric,
\noi
\begin{align*}
 \E[F^3]
 =q!\sum_{r=\lceil q/2\rceil}^q
 r!\binom qr^2\binom r{q-r}
 \inner{f\star_r^{q-r}f}{f}_{L^2(\mu^q)}.
\end{align*}

On the Gaussian space, we deduce from \eqref{Gau_prod_full} that
\noi
\begin{align}
 G^2
 =\sum_{r=0}^q r!\binom qr^2
 I_{2q-2r}^W
 \left( f \wt{\star_r^r} f\right).
 \label{G2_Gau_full}
\end{align}
A term contributes to $\E[G^3]$ exactly when
$2q-2r=q$.  If $q$ is odd there is no such integer $r$, so
$\E[G^3]=0$.  If $q$ is even, the unique contribution is
$r=q/2$, and
\noi
\begin{align*}
 \E[G^3]
 =q!\left(\frac q2\right)!
 \binom q{q/2}^2
 \inner{f\star_{q/2}^{q/2}f}{f}_{L^2(\mu^q)}.
\end{align*}
Hence in either parity
\noi
\begin{align}
 \E[F^3]-\E[G^3]
 =q!\sum_{r=\lfloor q/2\rfloor+1}^q
 r!\binom qr^2\binom r{q-r}
 \inner{f\star_r^{q-r}f}{f}.
 \label{third_diff_exact}
\end{align}

For every index in this sum, put $\ell=q-r$ and
$t=r-\ell=2r-q\geq1$.  Applying \eqref{star_D} and then
\eqref{J4_prop}, we obtain
\noi
\begin{align}
 \|f\star_r^{q-r}f\|_2
 \leq
 \frac{(2q-2r)!}{(q!)^2}
 \sqrt{A_{q,2r-q}}\sqrt{\Jfour(F)}.
 \label{relevant_offdiag}
\end{align}

Applying Cauchy--Schwarz to each inner product in
\eqref{third_diff_exact}, then inserting
\eqref{relevant_offdiag} and
$\|f\|_2=\sqrt{\E[F^2]/q!}$, gives
\begin{align*}
 |\E[F^3]-\E[G^3]|
 &\leq
 \frac{\sqrt{\E[F^2]\Jfour(F)}}{(q!)^{3/2}}
 \sum_{r=\lfloor q/2\rfloor+1}^q
 r!\binom qr^2\binom r{q-r}(2q-2r)!
 \sqrt{A_{q,2r-q}}.
\end{align*}
Under the normalization
$\E[F^2]=2\nu$, this is exactly
\eqref{prop18_1}.
\end{proof}
\subsection{Proof of Proposition~\ref{prop18}: fourth moments}
\label{SEC_54}

We now compare $F^2$ with the Poisson chaos expansion
obtained from the Gaussian square $G^2$ by replacing each Gaussian
multiple integral with the Poisson multiple integral having the same
kernel.  More precisely, by
\eqref{G2_Gau_full} and \eqref{chaos_transport}, define
\noi
\begin{align}
 \cG_q(f)
 =\mathsf T(G^2)
 =\sum_{r=0}^q r!\binom qr^2
 I_{2q-2r}^\eta
 \left( f\wt{\star_r^r} f \right).
 \label{Gaussian_skeleton}
\end{align}
Set
\noi
\begin{align}
 C_q
 =\frac1{(q!)^2}
 \sum_{r=1}^q\sum_{\ell=0}^{r-1}
 r!\binom qr^2\binom r\ell
 \sqrt{(2q-r-\ell)!}(q-r+\ell)!
 \sqrt{A_{q,r-\ell}}.
 \label{Cq_def}
\end{align}

\begin{proposition}
\label{prop_square_gauss}
For every $F=I_q^\eta(f)\in\cC_q^\eta\cap L^4(\Omega)$,
\noi
\begin{align}
 \|F^2-\cG_q(f)\|_{L^2(\PP)}
 \leq C_q\sqrt{\Jfour(F)}.
 \label{square_gauss}
\end{align}
\end{proposition}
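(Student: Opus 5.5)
The plan is to subtract the Gaussian skeleton \eqref{Gaussian_skeleton} from the full Poisson product formula \eqref{F2_Poi_full}. As in the proof of \eqref{prop18_1}, Lemma~\ref{lem_offdiag} together with $\|f\otimes_r f\|_2\leq\|f\|_2^2$ shows that every contraction appearing in \eqref{Poi_prod_full} is square integrable. Hence \eqref{F2_Poi_full} holds as an identity in $L^2(\PP)$.

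The terms with $\ell=r$ in \eqref{F2_Poi_full} are exactly $r!\binom qr^2 I_{2q-2r}^\eta(f\wt{\star_r^r}f)$, which are the summands of $\cG_q(f)$. Therefore
\[
 F^2-\cG_q(f)
 =\sum_{r=1}^q\sum_{\ell=0}^{r-1}
 r!\binom qr^2\binom r\ell
 I_{2q-r-\ell}^\eta\bigl(f\wt{\star_r^\ell}f\bigr).
\]
The case $r=0$ contributes nothing to this difference, because it has only $\ell=0=r$.

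Next I take $L^2(\PP)$ norms and use Minkowski's inequality term by term. I do not try to exploit orthogonality here: different pairs $(r,\ell)$ can land in the same chaos order, and the constant $C_q$ in \eqref{Cq_def} is visibly a plain sum. By the isometry \eqref{iso} and \eqref{contr_CS},
\[
 \bigl\|I_{2q-r-\ell}^\eta(f\wt{\star_r^\ell}f)\bigr\|_2
 =\sqrt{(2q-r-\ell)!}\,\|\sym(f\star_r^\ell f)\|_2
 \leq\sqrt{(2q-r-\ell)!}\,\|f\star_r^\ell f\|_2 .
\]
Inserting Lemma~\ref{lem_offdiag}, namely $\|f\star_r^\ell f\|_2\leq\frac{(q-r+\ell)!}{(q!)^2}\sqrt{A_{q,r-\ell}}\sqrt{\Jfour(F)}$, and summing over $1\leq r\leq q$ and $0\leq\ell\leq r-1$, gives exactly the constant $C_q$ of \eqref{Cq_def}. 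This proves \eqref{square_gauss}.

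The only delicate point is the validity of the product formula \eqref{Poi_prod_full} for a general $F\in\cC_q^\eta\cap L^4(\Omega)$, as opposed to regular kernels. I would handle it in one of two ways. The first is to accept it as in the proof of \eqref{prop18_1}, since all contractions are in $L^2$. The second, more robust route is to prove \eqref{square_gauss} first for the martingale approximants $F_n=I_q^\eta(f_n)\in\cC_q^\bigstar$ of Proposition~\ref{prop_core}, for which the product formula is classical, and then pass to the limit. In that limit, $F_n^2\to F^2$ in $L^2$ by $L^4$ convergence, and $\Jfour(F_n)\to\Jfour(F)$ by \eqref{core_J4}. The skeleton $\cG_q(f_n)\to\cG_q(f)$ in $L^2$ because the fully integrated contractions are continuous in $L^2$: they satisfy $\|f_n\otimes_r f_n-f\otimes_r f\|_2\leq(\|f_n\|_2+\|f\|_2)\|f_n-f\|_2$, and $f_n\to f$ in $L^2(\mu^q)$ by the isometry \eqref{iso}.
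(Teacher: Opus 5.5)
Your proposal is correct and follows the paper's proof essentially verbatim: you subtract the $\ell=r$ terms of \eqref{F2_Poi_full}, then apply the triangle inequality, the isometry with contractivity of symmetrization as in \eqref{sym_contract}, and Lemma~\ref{lem_offdiag}, which recovers exactly $C_q$. Your optional second route through the approximants of Proposition~\ref{prop_core} is also sound and is a reasonable extra safeguard for the product formula; the paper itself does not use it and relies on \eqref{F2_Poi_full}, as justified in the proof of the third-moment transfer.
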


\begin{proof}
Subtracting the $\ell=r$ terms in \eqref{F2_Poi_full}, we obtain the
exact identity
\noi
\begin{align*}
 F^2-\cG_q(f)
 =\sum_{r=1}^q r!\binom qr^2
 \sum_{\ell=0}^{r-1}\binom r\ell
 I_{2q-r-\ell}^\eta
 \left( f\wt{\star_r^\ell} f \right).
\end{align*}
For $k=2q-r-\ell$, the multiple-integral isometry and contractivity
of symmetrization give
\noi
\begin{align}
 \|I_k^\eta( f\wt{\star_r^\ell} f )\|_2
 &\leq\sqrt{k!}\|f\star_r^\ell f\|_2.
 \label{sym_contract}
\end{align}

The triangle inequality, \eqref{sym_contract}, and
\eqref{offdiag_J4} therefore give
\begin{align*}
 \|F^2-\cG_q(f)\|_2
 &\leq
 \sum_{r=1}^q\sum_{\ell=0}^{r-1}
 r!\binom qr^2\binom r\ell
 \sqrt{(2q-r-\ell)!}
 \|f\star_r^\ell f\|_2\\
 &\leq C_q\sqrt{\Jfour(F)},
\end{align*}
which is \eqref{square_gauss}.
\end{proof}

\begin{proof}[Proof of \eqref{prop18_2}]
By \eqref{G2_Gau_full}, \eqref{Gaussian_skeleton}, and the Poisson and
Gaussian isometries in \eqref{Poi_Gau_iso}, we have
$\|\cG_q(f)\|_2^2=\|G^2\|_2^2=\E[G^4]$.
Combining this identity with the reverse triangle inequality and
Proposition~\ref{prop_square_gauss}, we obtain
\begin{align*}
 \big|\sqrt{\E[F^4]}-\sqrt{\E[G^4]}\big|
 &=\big|\|F^2\|_2-\|\cG_q(f)\|_2\big|\\
 &\leq\|F^2-\cG_q(f)\|_2
 \leq C_q\sqrt{\Jfour(F)}.
\end{align*}
This proves \eqref{prop18_2} and completes the proof of
Proposition~\ref{prop18}.
\end{proof}

\subsection{Proof of Proposition~\ref{prop19}: the Gamma defect and even orders}
\label{SEC_55}

Recall from \eqref{intro_Del} that for a Gaussian-chaos
variable $G$ with $\E[G^2]=2\nu$ we write
$\Del_\nu(G)$ for the corresponding third--fourth Gamma defect.
The next estimate combines the odd and even moment transfers.

\begin{lemma}
\label{lem_defect_transfer}
If $\E[F^2]=\E[G^2]=2\nu$, then
\noi
\begin{align}
 |\Del_\nu(F)-\Del_\nu(G)|
 &\leq12B_q\sqrt{2\nu\Jfour(F)}+
 C_q\sqrt{\Jfour(F)}
 \left(4\nu\,3^q+C_q\sqrt{\Jfour(F)}\right).
 \label{defect_transfer}
\end{align}
\end{lemma}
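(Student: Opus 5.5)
Since both variables have second moment $2\nu$, the terms $-12\nu^2+48\nu$ in \eqref{intro_Del} cancel and
\[
 \Del_\nu(F)-\Del_\nu(G)=\bigl(\E[F^4]-\E[G^4]\bigr)-12\bigl(\E[F^3]-\E[G^3]\bigr).
\]
The plan is to estimate the two differences separately with Proposition~\ref{prop18} and combine them by the triangle inequality.

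The third-moment part is immediate. By \eqref{prop18_1}, $12|\E[F^3]-\E[G^3]|\leq12B_q\sqrt{2\nu\Jfour(F)}$, which is the first term of \eqref{defect_transfer}.

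For the fourth moments, write $a=\sqrt{\E[F^4]}=\|F^2\|_2$ and $b=\sqrt{\E[G^4]}=\|\cG_q(f)\|_2$. Then
\[
 |\E[F^4]-\E[G^4]|=|a-b|(a+b)\leq|a-b|\bigl(2b+|a-b|\bigr).
\]
By \eqref{prop18_2}, $|a-b|\leq C_q\sqrt{\Jfour(F)}$. Gaussian hypercontractivity \eqref{Gau_hyper} with $p=4$ and $r=q$ gives $\E[G^4]\leq3^{2q}\E[G^2]^2$, that is, $b\leq3^q\cdot2\nu$ and so $2b\leq4\nu\,3^q$. Substituting these bounds gives
\[
 |\E[F^4]-\E[G^4]|\leq C_q\sqrt{\Jfour(F)}\bigl(4\nu\,3^q+C_q\sqrt{\Jfour(F)}\bigr),
\]
which is the second term of \eqref{defect_transfer}.

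None of these steps is a real obstacle. The one point needing care is the fourth-moment step: we expand $a+b\leq2b+|a-b|$ around the Gaussian side and not the Poisson side, because $b$ has a clean hypercontractive bound while $a$ is controlled only through $\Jfour$. We also check that \eqref{Gau_hyper} applies to the single chaos $\cC_q^W$ with exponent $r=q$, so the constant comes out as exactly $3^q$.
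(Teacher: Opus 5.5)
Your proposal is correct and follows essentially the same route as the paper: cancel the common constants, bound the third-moment difference by \eqref{prop18_1}, and control $|\E[F^4]-\E[G^4]|$ by writing it as $|a-b|(a+b)$, with $|a-b|\leq C_q\sqrt{\Jfour(F)}$ from \eqref{prop18_2} and $b\leq2\nu\,3^q$ from \eqref{Gau_hyper}. The paper phrases the last step as $a\leq2\nu\,3^q+C_q\sqrt{\Jfour(F)}$, which gives exactly the same bound as your $a+b\leq2b+|a-b|$.
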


\begin{proof}
Gaussian hypercontractivity \eqref{Gau_hyper} and
$\E[G^2]=2\nu$ imply
$\sqrt{\E[G^4]}\leq2\nu\,3^q$.  We deduce from
\eqref{prop18_2} that
$\sqrt{\E[F^4]}\leq2\nu\,3^q+C_q\sqrt{\Jfour(F)}$, and hence
\begin{align*}
 |\E[F^4]-\E[G^4]|
 &\leq C_q\sqrt{\Jfour(F)}
 \left(4\nu\,3^q+C_q\sqrt{\Jfour(F)}\right).
\end{align*}
The variance-dependent constants in the two defects are identical.
Combining the preceding estimate with \eqref{prop18_1}, we obtain
\eqref{defect_transfer}.
\end{proof}

\begin{proof}[Proof of \eqref{prop19_1}--\eqref{prop19_2}]
Under the assumptions of part~{\rm(i)}, the right-hand side of
\eqref{defect_transfer} tends to zero.  This proves
\eqref{prop19_1}.

Assume now that $q$ is even.  The first equivalence below follows
from \eqref{prop19_1}, the second from the Gaussian centered Gamma
theorem \eqref{NP_Gam}, and the third from Theorem~\ref{thm4}:
\[
 \Del_\nu(F_n)\to0
 \quad\Longleftrightarrow\quad
 \Del_\nu(G_n)\to0
 \quad\Longleftrightarrow\quad
 G_n\xrightarrow{\mathrm d}Z_\nu
 \quad\Longleftrightarrow\quad
 F_n\xrightarrow{\mathrm d}Z_\nu.
\]
Here the last step uses $\E[F_n^2]=2\nu$ and
$\Jfour(F_n)\to0$, which imply $\dW(F_n,G_n)\to0$.  This proves
\eqref{prop19_2}.
\end{proof}

\subsection{Proof of Proposition~\ref{prop19}: odd orders}
\label{SEC_56}

When $q$ is odd, the Gaussian square has no chaos-$q$ component.
Therefore, for every $F$ with $\E[F^2]=2\nu$, we deduce from
\eqref{prop18_1} that
\noi
\begin{align}
 |\E[F^3]|
 \leq B_q\sqrt{2\nu\Jfour(F)}.
 \label{odd_third_bound}
\end{align}

\begin{proof}[Proof of Proposition~\ref{prop19}{\rm(ii)}]
Assume that $q$ is odd and $F_n\xrightarrow{\mathrm d}Z_\nu$; recall
that $\E[F_n^2]=2\nu$.  Set
$L=\liminf_{n\to\infty}\Jfour(F_n)$.  There is nothing to prove if
$L=\infty$.  Otherwise, choose $(n_k)$ such that
$\Jfour(F_{n_k})\to L$.  Gaussian hypercontractivity
\eqref{Gau_hyper} and \eqref{prop18_2} yield
\noi
\begin{align*}
 \sup_k\sqrt{\E[F_{n_k}^4]}
 \leq2\nu\,3^q+C_q\sup_k\sqrt{\Jfour(F_{n_k})}<\infty.
\end{align*}
We deduce from \eqref{UI_tail}, with $r=3$ and $s=4$, that
$\{|F_{n_k}|^3:k\geq1\}$ is uniformly integrable.  Hence
$\E[F_{n_k}^3]\to\E[Z_\nu^3]=8\nu$.  We deduce from
\eqref{odd_third_bound} that
$
 8\nu\leq B_q\sqrt{2\nu L},
$
which is equivalent to \eqref{prop19_3}.
\end{proof}

\medskip
\noi
$\bullet$ {\bf Statement on the use of generative AI.}
Generative AI tools were used during the preparation of this
manuscript to assist with language editing, 
 LaTeX formatting, and preliminary literature searches.
The authors independently verified all references, calculations,
proofs, and conclusions and take full responsibility for the contents
of the paper.


\begin{thebibliography}{99}

\bibitem[APY21]{APY21}
E.~Azmoodeh, G.~Peccati, X.~Yang,
\emph{Malliavin--Stein method: a survey of some recent developments},
Modern Stoch. Theory Appl. 8 (2021), no.~2, 141--177.


\bibitem[DP18a]{DP18a}
C.~D\"obler, G.~Peccati,
\emph{The Gamma Stein equation and non-central de Jong theorems},
Bernoulli 24 (2018), 3384--3421.

\bibitem[DP18b]{DP18b}
C.~D\"obler, G.~Peccati,
\emph{The fourth moment theorem on the Poisson space},
Ann. Probab. 46 (2018), 1878--1916.

\bibitem[DVZ18]{DVZ18}
C.~D\"obler, A.~Vidotto, G.~Zheng,
\emph{Fourth moment theorems on the Poisson space in any dimension},
Electron. J. Probab. 23 (2018), paper no.~36, 27 pp.

\bibitem[ET14]{ET14}
P.~Eichelsbacher, C.~Th\"ale,
\emph{New Berry--Esseen bounds for non-linear functionals of Poisson
random measures},
Electron. J. Probab. 19 (2014), paper no.~102, 25 pp.

\bibitem[FT16]{FT16}
T.~Fissler, C.~Th\"ale,
\emph{A four moments theorem for Gamma limits on a Poisson chaos},
ALEA Lat. Am. J. Probab. Math. Stat. 13 (2016), 163--192.

\bibitem[FT17]{FT17}
T.~Fissler, C.~Th\"ale,
\emph{Erratum to: ``A four moments theorem for Gamma limits on a
Poisson chaos''},
ALEA Lat. Am. J. Probab. Math. Stat. 14 (2017), 245--247.

\bibitem[LRP13]{LRP13}
R.~Lachi\`eze-Rey, G.~Peccati,
\emph{Fine Gaussian fluctuations on the Poisson space, I:
contractions, cumulants and geometric random graphs},
Electron. J. Probab. 18 (2013), paper no.~32, 32 pp.


\bibitem[Las16]{Las16}
G.~Last,
\emph{Stochastic analysis for Poisson processes},
in: G.~Peccati, M.~Reitzner (eds.),
\emph{Stochastic Analysis for Poisson Point Processes},
Bocconi \& Springer Series, vol.~7, Springer, Cham, 2016, pp.~1--36.

\bibitem[LP18]{LP18}
G.~Last, M.~Penrose,
\emph{Lectures on the Poisson Process},
Institute of Mathematical Statistics Textbooks, vol.~7,
Cambridge University Press, Cambridge, 2018.


\bibitem[NP09a]{NP09a}
I.~Nourdin, G.~Peccati,
\emph{Stein's method on Wiener chaos},
Probab. Theory Related Fields 145 (2009), 75--118.

\bibitem[NP09b]{NP09b}
I.~Nourdin, G.~Peccati,
\emph{Noncentral convergence of multiple integrals},
Ann. Probab. 37 (2009), 1412--1426.

\bibitem[NP12]{NP12}
I.~Nourdin, G.~Peccati,
\emph{Normal Approximations with Malliavin Calculus:
From Stein's Method to Universality},
Cambridge Tracts in Mathematics, vol.~192,
Cambridge University Press, Cambridge, 2012.

\bibitem[NPR10]{NPR10}
I.~Nourdin, G.~Peccati, G.~Reinert,
\emph{Invariance principles for homogeneous sums: universality of
Gaussian Wiener chaos},
Ann. Probab. 38 (2010), 1947--1985.

\bibitem[NP05]{NP05}
D.~Nualart, G.~Peccati,
\emph{Central limit theorems for sequences of multiple stochastic
integrals},
Ann. Probab. 33 (2005), 177--193.

\bibitem[PSTU10]{PSTU10}
G.~Peccati, J.L.~Sol\'e, M.S.~Taqqu, F.~Utzet,
\emph{Stein's method and normal approximation of Poisson functionals},
Ann. Probab. 38 (2010), 443--478.

\bibitem[PT11]{PT11}
G.~Peccati, M.S.~Taqqu,
\emph{Wiener Chaos: Moments, Cumulants and Diagrams},
Bocconi \& Springer Series, vol.~1,
Springer, Milan, 2011.

\bibitem[PT13]{PT13}
G.~Peccati, C.~Th\"ale,
\emph{Gamma limits and $U$-statistics on the Poisson space},
ALEA Lat. Am. J. Probab. Math. Stat. 10 (2013), 525--560.

\bibitem[PT05]{PT05}
G.~Peccati, C.A.~Tudor,
\emph{Gaussian limits for vector-valued multiple stochastic
integrals},
in: \emph{S\'eminaire de Probabilit\'es XXXVIII},
Lecture Notes in Mathematics, vol.~1857,
Springer, Berlin, 2005, pp.~247--262.

\bibitem[PZ10]{PZ10}
G.~Peccati, C.~Zheng,
\emph{Multi-dimensional Gaussian fluctuations on the Poisson space},
Electron. J. Probab. 15 (2010), 1487--1527.

\bibitem[PZ14]{PZ14}
G.~Peccati, C.~Zheng,
\emph{Universal Gaussian fluctuations on the discrete Poisson chaos},
Bernoulli 20 (2014), 697--715.

\bibitem[QSS07]{QSS07}
A.~Quarteroni, R.~Sacco, F.~Saleri,
\emph{Numerical Mathematics},
2nd ed., Texts in Applied Mathematics, vol.~37,
Springer, Berlin, Heidelberg, 2007.


\bibitem[Zhe18]{Zhe18}
G.~Zheng,
\emph{Recent developments around the Malliavin--Stein approach:
fourth-moment phenomena via exchangeable pairs},
Ph.D. thesis, Universit\'e du Luxembourg, 2018.


\bibitem[Zhe26a]{Zhe26a}
G.~Zheng,
\emph{A Kolmogorov fourth-moment bound on Poisson chaos via a
martingale core},
Preprint, arXiv:2607.28742, 2026.


\bibitem[Zhe26b]{Zhe26b}
G.~Zheng,
\emph{Four-moment criteria for Poisson convergence
on Poisson and Rademacher chaoses},
Preprint, arXiv:2608.12451, 2026.

\end{thebibliography}
\end{document}